\newcommand{\tdun}[1]{\begin{picture}(10,5)(-2,-1)
\put(0,0){\circle*{2}}
\put(3,-2){\tiny #1}
\end{picture}}
\newcommand{\g}{\mathfrak{g}}
\newcommand{\h}{\mathfrak{h}}
\newcommand{\U}{\mathcal{U}}
\newcommand{\D}{\mathcal{D}}
\newcommand{\tdelta}{\tilde{\Delta}}
\newcommand{\FQSym}{\mathbf{FQSym}}
\newcommand{\PQSym}{\mathbf{PQSym}}
\newcommand{\NCQSym}{\mathbf{NCQSym}}
\newcommand{\YSym}{\mathbf{YSym}}
\renewcommand{\H}{\mathcal{H}}
\title{Free and cofree Hopf algebras}
\date{}
\author{L. Foissy \\
\\
{\small{\it Laboratoire de Math\'ematiques, Universit\'e de Reims}}\\
\small{{\it Moulin de la Housse - BP 1039 - 51687 REIMS Cedex 2, France}}\\
\small{e-mail : loic.foissy@univ-reims.fr}}
\newtheorem{defi}{\indent Definition}
\newtheorem{lemma}[defi]{\indent Lemma}
\newtheorem{cor}[defi]{\indent Corollary}
\newtheorem{theo}[defi]{\indent Theorem}
\newtheorem{prop}[defi]{\indent Proposition}
\newenvironment{proof}{{\bf Proof.}}{\hfill $\Box$}
\begin{document}

\maketitle

ABSTRACT. We first prove that a graded, connected, free and cofree Hopf algebra is always self-dual;
then that two graded, connected, free and cofree Hopf algebras are isomorphic if, and only if, they have the same Poincaré-Hilbert formal series. 
If the characteristic of the base field is zero, we prove that the Lie algebra of the primitive elements of such an object is free, and we deduce
a characterization of the formal series of free and cofree Hopf algebras by a condition of growth of the coefficients. 
We finally show that two graded, connected, free and cofree Hopf algebras are isomorphic as (non graded) Hopf algebras
if, and only if, the Lie algebra of their primitive elements have the same number of generators.\\

KEYWORDS. Free and cofree Hopf algebras; self-duality.\\

AMS CLASSIFICATION. 16W30.

\tableofcontents

\section*{Introduction}

The theory of combinatorial Hopf algebras has known a great extension in the last decade.
It turns out that an important part of the Hopf algebras studied in this theory are both free and cofree, for example:
\begin{enumerate}
\item The Hopf algebra $\FQSym$ of free quasi-symmetric functions \cite{Malvenuto2,Malvenuto,Poirier}, also known as the Malvenuto-Reutenauer Hopf algebra.
\item The Hopf algebra $\PQSym$ of parking functions \cite{Novelli2,Novelli1}.
\item The Hopf algebra on set compositions, called $P\Pi$ in \cite{Aguiar3} and  $\NCQSym$ in \cite{Bergeron}.
\item The isomorphic Hopf algebras $R\Pi$ and $S\Pi$ on pairs of permutations of \cite{Aguiar3}.
\item The Loday-Ronco Hopf algebra $\H_{LR}$ of planar binary trees \cite{Loday2} and its dual $\YSym$ \cite{Aguiar2}.
\item The Hopf algebras of planar rooted trees \cite{Foissy2,Holtkamp}, also known as the non-commutative Connes-Kreimer Hopf algebra $\H_{NCK}$,
and its decorated versions.
\item The Hopf algebra of double posets $\H_{DP}$ \cite{Reutenauer}.
\item The Hopf algebra of ordered forests $\H_o$ and its subalgebra of heap-ordered forests $\H_{ho}$ \cite{FoissyUnterberger}.
\item The free $2$-$As$ algebras \cite{Loday}.
\item The Hopf algebra of uniform block permutations $\H_{UBP}$  \cite{Aguiar}.
\item And, if the characteristic of the base field $K$ is zero, the Hopf algebra $K[X]$.
\end{enumerate}
Note that the space of generators and the space of cogenerators are not the same in these examples, except for $K[X]$.

It also turns out that certain of these objects are self-dual. The self-duality is proved by the construction of a more or less explicit pairing for $\FQSym$, 
the Connes-Kreimer Hopf algebras, the Hopf algebra of posets and $K[X]$. In the case of $\PQSym$ or the Hopf algebra of ordered forests, 
the self-duality and the cofreeness is proved by the construction of an non-explicit isomorphism with a Connes-Kreimer Hopf algebra, 
using non-associative products and coproducts and a rigidity theorem \cite{Foissy3,Foissy4}. 
It can similarly be proved that the Hopf algebra of double posets is isomorphic to a Connes-Kreimer Hopf algebra.
As a corollary, any two of these objects with the same formal series are isomorphic,
and their Lie algebras of primitive elements are free if the characteristic of the base field is zero. 
Note that the self-duality of $R\Pi$ and $S\Pi$ and the freeness of their Lie algebras of primitive elements were not proved yet and are implied by
 theorem \ref{4} and corollary \ref{12} of the present text. \\

In this context, the following questions are natural: 
\begin{enumerate}
\item Is a free and cofree Hopf algebra always self-dual?
\item Are two free and cofree Hopf algebras with the same formal series always isomorphic?
\item What can be said of the structure of the Lie algebra of the primitive elements of a free and cofree Hopf algebra?
\end{enumerate}
We here give a positive answer to questions 1 and 2 and prove for question 3 that these Lie algebras are free if the characteristic of the base field is $0$.
We first prove that any free and cofree Hopf algebra $H$ can be given a non-degenerate, symmetric Hopf pairing, so is self-dual. 
More precisely, if $\g$ is the Lie algebra of the primitive elements of $H$ and $H^+$ is the augmentation ideal of $H$, then any non-degenerate symmetric pairing
on the space of indecomposable primitive elements $\frac{\g}{\g \cap H^{+2}}$ can be (not uniquely) extended to a non-degenerate, 
symmetric Hopf pairing on $H$ (theorem \ref{4}).
We then deduce that two free and cofree Hopf algebras $H$ and $H'$ are isomorphic as graded Hopf algebras if, and only if,
$H$ and $H'$ have the same Poincaré-Hilbert formal series (theorem \ref{7}).

Restricting us to a base field of characteristic zero, we characterize the formal series of free and cofree Hopf algebras.
We first prove that the Lie algebra $\g$ is free, answering question 3 (corollary \ref{12}). 
We deduce in proposition \ref{13} relations between the coefficients of the following formal series:
$$R(h)=\sum_{n=0}^\infty dim(H_n)h^n,\hspace{.5cm}
P(h)=\sum_{n=1}^\infty dim(\g_n) h^n,\hspace{.5cm}
S(h)=\sum_{n=1}^\infty dim\left(\left(\frac{\g}{\g \cap H^{+2}}\right)_n\right) h^n.$$
Using non-commutative Connes-Kreimer Hopf algebras, we prove that the formal series $S(h)$ can be arbitrarily chosen (corollary \ref{16}).
As a consequence, the formal series of free and cofree Hopf algebras are characterized by growth conditions of the coefficients, expressed in corollary \ref{18}.
As the growth condition characterizing the formal series of non-commutative Connes-Kreimer Hopf algebras is distinct (theorem \ref{20}),
this implies that there exists free and cofree Hopf algebras that are neither $K[X]$ nor Connes-Kreimer Hopf algebras.

Let $H$ and $H'$ be two graded, connected, free and connected Hopf algebras. Are they isomorphic as (non-graded) Hopf algebras?
In order to answer this question, we first refine their graduation into a bigraduation in the last section of this text. We then prove that 
$H$ and $H'$ are isomorphic if, and only if, $\frac{\g}{[\g,\g]}$ and $\frac{\g'}{[\g',\g']}$ have the same dimension. As a corollary, 
$\FQSym$, $\PQSym$, $\NCQSym$, $R\Pi$, $S\Pi$, $\H_{LR}$, $\YSym$, $\H_{NCK}$ and its decorated version $\H_{NCK}^\D$ 
for any non-empty graded set $\D$, $\H_{DP}$, $\H_o$, $\H_{ho}$, the free $2$-$As$ algebras, and $\H_{UBP}$ are isomorphic.\\

 {\bf Notations.} \begin{enumerate}
 \item In the whole text, $K$ is a commutative field of characteristic $\neq 2$. Any algebra, coalgebra, Hopf algebra\ldots of the text will be taken over $K$.
 \item If $H$ is a Hopf algebra, we denote by $H^+$ its augmentation ideal and by $Prim(H)$ or by $\g$ if there is no ambiguity the Lie algebra of its primitive elements.
 Moreover, $H^+$ inherits a coassociative, non counitary coproduct $\tdelta$ defined by $\tdelta(x)=\Delta(x)-x \otimes 1-1\otimes x$  for all $x \in H^+$.
 The square product $H^{+2}$ of the ideal $H^+$ by itself is called the space of {\it decomposable elements} \cite{Milnor}; 
 the quotients $\frac{H^+}{H^{+2}}$ and $\frac{\g}{\g \cap H^{+2}}$ are respectively called the space of {\it indecomposable elements}
 and of {\it indecomposable primitive elements}.
 \end{enumerate}
 
 {\bf Aknowledgements.} This work was partially supported by a PEPS.%\\
 
 %The author would like to thank the anonymous referee for his useful comments and remarks.

\section{Free and cofree Hopf algebras are self-dual}

\subsection{Hopf pairings}

We first recall the following definition:

\begin{defi}\label{d1}\textnormal{
\begin{enumerate}
\item Let $H,H'$ be two graded, connected Hopf algebras. A {\it homogeneous Hopf pairing} is a bilinear form $\langle-,-\rangle:H\times H' \longrightarrow K$ such that:
\begin{enumerate}
\item For all $x\in H$, $x'\in H'$, $\langle x,1\rangle=\varepsilon(x)$ and $\langle 1,x'\rangle=\varepsilon(x')$.
\item For all $x,y \in H$, $y',z'\in H'$, $\langle xy,z'\rangle=\langle x \otimes y, \Delta(z')\rangle$ and $\langle x,y'z'\rangle=\langle \Delta(x),y'\otimes z'\rangle$.
\item If $x,x'$ are homogeneous of different degrees, then $\langle x,x'\rangle =0$.
\end{enumerate}
\item Let $H$ be a graded, connected Hopf algebra. We shall say that $H$ is {\it self-dual} if it can be given a non-degenerate Hopf pairing
$\langle-,-\rangle:H\times H\longrightarrow K$.
\end{enumerate}}\end{defi}

{\bf Note.} As all the Hopf pairings considered here are homogeneous, we shall simply write "Hopf pairings" for "homogeneous Hopf pairings" in this text.\\

Let $\langle-,-\rangle$ be any pairing on $H \times H'$. It is a Hopf pairing if, and only if, the following map is a morphism of graded Hopf algebras:
$$\left\{\begin{array}{rcl}
H&\longrightarrow&H'^*\\
x&\longrightarrow&\langle x,-\rangle,
\end{array}\right.$$
where $H'^*$ is the graded dual of $H'$. Moreover, if the pairing is non-degenerate, then this map is an isomorphism.
Conversely, any morphism of graded Hopf algebras $\phi:H\longrightarrow H'^*$ gives a Hopf pairing, defined by $\langle x,x'\rangle=\phi(x)(x')$.
As a consequence,  a graded, connected Hopf algebra $H$ is self-dual in the sense of definition \ref{d1} if, and only if, it is isomorphic to $H^*$ as a graded Hopf algebra;
moreover, symmetry of the pairing is equivalent to self-duality of the isomorphism. 

\begin{lemma} \label{2}
Let $H,H'$ be two graded, connected Hopf algebras, and let $\langle-,-\rangle$ be a Hopf pairing on $H\times H'$. Let us fix $n \geq 1$.
If $\langle-,-\rangle_{\mid H_k \times H'_k}$ is non-degenerate for all $k<n$, then in $H'_n$, $((H^{+2})_n)^\perp=Prim(H')_n$.
\end{lemma}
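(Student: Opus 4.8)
The plan is to prove the two inclusions of the claimed equality separately, using nothing more than compatibility axiom (b) of Definition \ref{d1}, the connectedness of $H'$, and the hypothesis on lower degrees.

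First I would dispose of the inclusion $Prim(H')_n \subseteq ((H^{+2})_n)^\perp$, which needs no hypothesis: given $z'$ primitive of degree $n$ and homogeneous $x,y \in H^+$ with $\deg(x)+\deg(y)=n$, axiom (b) gives $\langle xy,z'\rangle=\langle x\otimes y,\Delta(z')\rangle=\langle x\otimes y,z'\otimes 1+1\otimes z'\rangle$, and this vanishes since $\langle x,1\rangle=\varepsilon(x)=0$ and $\langle y,1\rangle=\varepsilon(y)=0$. As $(H^{+2})_n$ is spanned by such products, $z'$ is orthogonal to it.

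For the converse inclusion I would take $z'\in H'_n$ with $z'\perp(H^{+2})_n$ and decompose $\tdelta(z')=\sum_{i=1}^{n-1}u_i$ with $u_i\in H'_i\otimes H'_{n-i}$, which is possible by connectedness. For $1\le i\le n-1$ and homogeneous $x\in H_i$, $y\in H_{n-i}$ (both lying in $H^+$), the same expansion as above together with homogeneity of the pairing isolates the degree-$(i,n-i)$ component and yields $\langle x\otimes y,u_i\rangle=\langle xy,z'\rangle=0$. Since $i<n$ and $n-i<n$, the restricted pairings on $H_i\times H'_i$ and on $H_{n-i}\times H'_{n-i}$ are non-degenerate by hypothesis, hence so is the pairing they induce on $(H_i\otimes H_{n-i})\times(H'_i\otimes H'_{n-i})$; therefore $u_i=0$. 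Running over all $i$ gives $\tdelta(z')=0$, i.e. $z'\in Prim(H')_n$.

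The lemma is short and I do not expect a genuine obstacle. The only point deserving care is the passage from non-degeneracy in each factor to non-degeneracy of the tensor-product pairing: writing $u_i=\sum_k a_k\otimes b_k$ with the $b_k$ linearly independent, letting $y$ range over $H_{n-i}$ kills every coefficient $\langle x,a_k\rangle$, and then letting $x$ range over $H_i$ kills every $a_k$. The other thing to keep in mind is that it is essential to restrict to $x,y\in H^+$ (equivalently, to degrees $i$ and $n-i$ both $\ge 1$): this is precisely what makes the $z'\otimes 1$ and $1\otimes z'$ parts of $\Delta(z')$ invisible to the pairing, and it is why the statement involves $H^{+2}$ rather than $H^2$.
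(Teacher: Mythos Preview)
Your proof is correct and follows essentially the same route as the paper: both directions rest on the identity $\langle xy,z'\rangle=\langle x\otimes y,\tdelta(z')\rangle$ for $x,y\in H^+$, and the converse inclusion is obtained by invoking non-degeneracy of the pairing in degrees $<n$ to force $\tdelta(z')=0$. The only difference is cosmetic: the paper phrases the last step as $\tdelta(z')\in\bigl((H^+\otimes H^+)_n\bigr)^\perp=(0)$, while you decompose $\tdelta(z')$ bidegree by bidegree and spell out the tensor-product non-degeneracy argument.
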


\begin{proof} Let $z\in H'_n$. For any $x \in H_k$, $y\in H_{n-k}$, $1\leq k \leq n-1$:
\begin{eqnarray*}
\langle xy,z\rangle&=&\langle x\otimes y,\Delta(z)\rangle\\
&=&\langle x \otimes y,1\otimes z+z\otimes 1+\tdelta(z)\rangle\\
&=&\varepsilon(x)\langle y,z\rangle +\varepsilon(y)\langle x,z\rangle+\langle x\otimes y,\tdelta(z)\rangle\\
&=&\langle x\otimes y,\tdelta(z)\rangle.
\end{eqnarray*}
If $z$ is primitive, then $\tdelta(z)=0$, so $\langle xy,z\rangle=0$ and $z\in ((H^{+2})_n)^\perp$. Conversely, if $z\in ((H^{+2})_n)^\perp$,
then $\tdelta(z) \in ((H^+\otimes H^+)_n)^\perp$. As the pairing is non-degenerate in degree $<n$:
$$((H^+\otimes H^+)_n)^\perp=\left(\sum_{k=1}^{n-1}H_k \otimes H_{n-k}\right)^\perp=(0),$$
so $z$ is primitive. \end{proof}\\

As a consequence, if $H$ is a  graded, connected, self-dual Hopf algebra, any non-degenerate Hopf pairing on $H$ induces a non-degenerate, homogeneous pairing 
on $\frac{\g}{\g\cap H^{+2}}$, where $\g$ is the Lie algebra $Prim(H)$. 
This pairing will be called {\it the induced pairing on } $\frac{\g}{\g\cap H^{+2}}$.

\begin{lemma}\label{3}
Let $H$ be a graded, connected, self-dual Hopf algebra, with a symmetric, non-degenerate Hopf pairing $\langle-,-\rangle$. Let us fix $n \in \mathbb{N}$.
Let $\h_n$ be a complement of $(\g \cap H^{+2})_n$ in $\g_n$ and let $m_n$ be a complement of $(\g \cap H^{+2})_n$ in $H^{+2}$.
Then $\h_n$ is non-isotropic, that is to say the restriction  $\langle-,-\rangle_{\mid \h_n\times \h_n}$ is non-degenerate.
There exists a complement $w_n$ of $\g_n+H^{+2}_n$ in $H_n$, such that, in $H_n$:
\begin{itemize}
\item $w_n^\perp=w_n \oplus \h_n \oplus m_n$ and the restriction of the pairing to $(\g\cap H^{+2})_n \times w_n$ is non-degenerate.
\item $\h_n^\perp=(\g\cap H^{+2})_n\oplus m_n \oplus w_n$ and the restriction of the pairing to $\h_n\times \h_n$ is non-degenerate.
\item $m_n^\perp=(\g\cap H^{+2})_n\oplus \h_n \oplus w_n$ and the restriction of the pairing to $m_n\times m_n$ is non-degenerate.
\item $((\g\cap H^{+2})_n)^\perp=(\g\cap H^{+2})_n\oplus m_n \oplus h_n$ and the restriction of the pairing to $w_n\times (\g\cap H^{+2})_n$ is non-degenerate.
\end{itemize}
\end{lemma}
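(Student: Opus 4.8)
We want to understand the structure of $H_n$ relative to the pairing and the decomposition coming from $\g_n$, $H^{+2}_n$, and a complement. The plan is to set up a convenient linear-algebra framework for a finite-dimensional space with a symmetric non-degenerate bilinear form, and to locate the subspaces $(\g\cap H^{+2})_n$, $\h_n$, $m_n$ and the sought complement $w_n$ inside it using orthogonality.

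First I would establish the non-isotropy of $\h_n$. By Lemma~\ref{2}, applied inductively (the pairing is non-degenerate in each degree $<n$, which we may assume since the global pairing is non-degenerate and homogeneous, hence non-degenerate in each degree), we have $((H^{+2})_n)^\perp = \g_n$ inside $H_n$. Dually, since the pairing is symmetric and non-degenerate on $H_n$, taking per
ps is an involution on subspaces with $\dim U + \dim U^\perp = \dim H_n$, and $(\g_n)^\perp = (H^{+2})_n$. Now $\h_n$ is a complement of $(\g\cap H^{+2})_n$ in $\g_n$; its radical $\h_n \cap \h_n^\perp$ lies in $\h_n \cap (\g_n)^\perp = \h_n \cap (H^{+2})_n \subseteq \h_n \cap (\g\cap H^{+2})_n = (0)$, because any element of $\h_n$ that lies in $H^{+2}_n$ is automatically in $\g\cap H^{+2}$. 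Hence $\langle-,-\rangle$ restricted to $\h_n$ is non-degenerate, i.e. $\h_n$ is non-isotropic.

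Next I would build $w_n$. Since $\h_n$ is non-isotropic, $H_n = \h_n \oplus \h_n^\perp$. Note $(H^{+2})_n = (\g_n)^\perp \subseteq \h_n^\perp$ and $(\g\cap H^{+2})_n \subseteq (H^{+2})_n$. Inside $\h_n^\perp$ consider the subspace $(H^{+2})_n$; I claim its radical inside $\h_n^\perp$ is exactly $(\g\cap H^{+2})_n$. Indeed an element $x \in (H^{+2})_n$ is orthogonal to all of $(H^{+2})_n$ iff $x \in ((H^{+2})_n)^\perp = \g_n$, i.e. iff $x \in (\g\cap H^{+2})_n$; and it is automatically orthogonal to $\h_n$. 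So $m_n$, being a complement of $(\g\cap H^{+2})_n$ in $(H^{+2})_n$, is non-isotropic, and $\h_n^\perp = m_n \oplus m_n^\perp|_{\h_n^\perp}$ with $(\g\cap H^{+2})_n \subseteq m_n^\perp$. Repeating the argument one more level: inside the space $m_n^\perp \cap \h_n^\perp$, the subspace $(\g\cap H^{+2})_n$ is totally isotropic (it is the radical of the whole filtration step), and a standard fact about symmetric non-degenerate forms says that a totally isotropic subspace of half the "available" dimension admits a complementary totally isotropic subspace $w_n$ with the form inducing a perfect pairing between $(\g\cap H^{+2})_n$ and $w_n$ — here one checks the dimension count $\dim w_n = \dim(\g\cap H^{+2})_n + \big(\dim H_n - \dim\g_n - \dim(H^{+2})_n\big)$ works out so that $w_n$ is precisely a complement of $\g_n + (H^{+2})_n$ in $H_n$. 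Then one reads off all four stated identities: $w_n^\perp = w_n \oplus \h_n \oplus m_n$, $\h_n^\perp = (\g\cap H^{+2})_n \oplus m_n \oplus w_n$, $m_n^\perp = (\g\cap H^{+2})_n \oplus \h_n \oplus w_n$, $((\g\cap H^{+2})_n)^\perp = (\g\cap H^{+2})_n \oplus m_n \oplus \h_n$, each together with the asserted non-degeneracy of the corresponding "off-diagonal" or "diagonal" block, by the same radical computations.

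The main obstacle is the existence of the hyperbolic complement $w_n$ to the totally isotropic space $(\g\cap H^{+2})_n$ together with the control of its dimension and its position relative to $\g_n + (H^{+2})_n$; everything else is bookkeeping with $\perp$ on a non-degenerate symmetric form (using $\dim U + \dim U^\perp = \dim H_n$ and $U^{\perp\perp}=U$) combined with Lemma~\ref{2}. The cleanest way around it is to work degree by degree, pass to the quotient $\h_n^\perp/m_n^\perp$-type spaces where $(\g\cap H^{+2})_n$ sits as a maximal totally isotropic subspace, invoke the standard basis-completion lemma for such subspaces, and then lift a chosen complement back to $H_n$; one must be slightly careful that the lift can be taken inside $\h_n^\perp \cap m_n^\perp$ so that the orthogonality relations with $\h_n$ and $m_n$ hold on the nose rather than merely modulo lower pieces.
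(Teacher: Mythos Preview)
Your approach is correct and reaches the same conclusion, but it is organized differently from the paper's argument. The paper first picks an \emph{arbitrary} complement $w_n$ of $\g_n+H^{+2}_n$, checks the dimension count $\dim w_n=\dim(\g\cap H^{+2})_n$, writes the matrix of the pairing in a basis adapted to $(\g\cap H^{+2})_n\oplus m_n\oplus \h_n\oplus w_n$, and then performs an explicit Gram--Schmidt correction of the $w_n$-basis (subtracting suitable combinations of the $x_i$, $y_j$, $z_k$, using $\mathrm{char}(K)\neq 2$ for the diagonal term) to kill the off-diagonal $D$, $E$, $F$ blocks. You instead peel off the non-isotropic summands $\h_n$ and $m_n$ first, land in the residual space $W=(\h_n\oplus m_n)^\perp$ of dimension $2\dim(\g\cap H^{+2})_n$, observe that $(\g\cap H^{+2})_n$ is a maximal totally isotropic subspace of $W$, and then invoke the Lagrangian-complement lemma for split quadratic spaces to produce $w_n$ directly with all orthogonality relations built in. Your route is more structural and avoids the matrix bookkeeping; the paper's route is more elementary and self-contained (it effectively reproves the Lagrangian-complement lemma in situ). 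One small point to tighten: when you write ``its radical $\h_n\cap\h_n^\perp$ lies in $\h_n\cap(\g_n)^\perp$'', note that $\h_n^\perp\supseteq\g_n^\perp$ goes the wrong way; the missing observation is that any $x\in\h_n$ is automatically orthogonal to $(\g\cap H^{+2})_n\subseteq H^{+2}_n=\g_n^\perp$, so $x\perp\h_n$ already forces $x\perp\g_n$. With that filled in, your argument is complete.
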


\begin{proof} By lemma \ref{2}, $\g_n \cap \g_n^\perp=(\g\cap H^{+2})_n$ and $H^{+2}_n \cap (H^{+2}_n)^\perp=(\g\cap H^{+2})_n$.
Hence, $\h_n$ and $m_n$ are non-isotropic subspaces of $H_n$. Moreover:
$$((\g\cap H^{+2})_n)^\perp=\g_n^\perp+((H^{+2})_n)^\perp=H^{+2}_n+\g_n=(\g\cap H^{+2})_n\oplus m_n\oplus \h_n.$$

Let us choose any complement $w_n$ of $\g_n+H^{+2}_n$ in $H_n$. As the pairing is non-degenerate:
\begin{eqnarray*}
dim(\g_n)&=&dim(H_n)-dim(\g_n^\perp)\\
dim(\g_n)&=&dim(H_n)-dim(H^{+2})_n\\
dim((\g\cap H^{+2})_n)+dim(\h_n)&=&dim(H_n)-dim((\g\cap H^{+2})_n)-dim(m_n)\\
dim((\g\cap H^{+2})_n)+dim(\h_n)&=&dim(\h_n)+dim(w_n),
\end{eqnarray*}
so $dim(w_n)=dim((\g\cap H^{+2})_n)$. Moreover, $(\g \cap H^{+2})_n^\perp=\g_n \oplus (H^{+2})_n$  by lemma \ref{2}, 
so $w_n\cap ((\g \cap H^{+2})_n)^\perp=(0)$ by choice of $w_n$. Hence, the restriction of the pairing to $(\g\cap H^{+2})_n \times w_n$ is non-degenerate. 
We finally obtain a decomposition:
$$H_n=(\g \cap H^{+2})_n \oplus m_n \oplus \h_n \oplus w_n.$$
Let us choose an adapted basis of $H_n$: $(x_i)_{i\in I} \cup (y_j)_{j\in J}\cup (z_k)_{k\in K} \cup (t_i)_{i\in I}$
(we can choose the same set of indices for the bases of $(\g\cap H^{+2})_n$ and $w_n$, as they have the same dimension).
In this basis, the matrix of the pairing has the following form:
$$\left(\begin{array}{cccc}
0&0&0&C\\
0&A&0&D\\
0&0&B&E\\
C^T&D^T&E^T&F
\end{array}\right),$$
where $A,B$ are symmetric, invertible matrices, and $C$ is an invertible matrix.
Changing the basis of $w_n$ so that $(x_i)_{i\in I}$ and $(t_i)_{i\in I}$ are dual, we can assume that $C$ is an identity matrix.
Let $A_j$ be the $j$-th column of $A$, $B_k$ the $k$-th column of $B$, and so on. As $A$ and $B$ are invertible, there exists scalars $\lambda^{(i)}_j$ 
and $\mu^{(i)}_k$ such that:
$$D_i=\sum_{j\in J} \lambda^{(i)}_j A_j,\hspace{.5cm} E_i=\sum_{k\in K}\mu^{(i)}_k B_k.$$
We then put:
$$t'_i=t_i-\sum_{j\in J} \lambda^{(i)}_j y_j -\sum_{k\in K} \mu^{(i)}_k z_k-\sum_{i'\in I} \frac{f_{i,i'}}{2} x_{i'}.$$
An easy computation proves that $t'_i$ is orthogonal to $y_j$, $z_k$, and $t'_{i'}$ for all $j,k,i'$. Taking the vector space $w'_n$ generated by
the $t'_i$, we obtain a complement of $\g_n+H^{+2}_n$ such that in an adapted basis of $H_n=(\g\cap H^{+2})_n \oplus m_n \oplus \h_n \oplus w'_n$,
the matrix of the pairing has the following form:
$$\left(\begin{array}{cccc}
0&0&0&C\\
0&A&0&0\\
0&0&B&0\\
C^T&0&0&0
\end{array}\right),$$
where $A$ and $B$ are symmetric, invertible matrices and $C$ is an invertible matrix.
The assertions on the orthogonals are then immediate.\end{proof}\\

{\bf Remark.} As a consequence, choosing bases of $(\g \cap H^{+2})_n$ and $w_n$ in duality, the matrix of the pairing has the following form:
$$\left(\begin{array}{cccc}
0&0&0&I\\
0&A&0&0\\
0&0&B&0\\
I&0&0&0
\end{array}\right),$$
where $A$ and $B$ are symmetric, invertible matrices and $I$ is an identity matrix.

\subsection{Self-duality of a free and cofree Hopf algebra}

\begin{lemma}\label{lemmeplus}
Let $H$ be a graded, connected, free and cofree Hopf algebra. Then, for all $n\geq 1$:
$$dim(\g_n)=dim\left(\left(\frac{H^+}{H^{+2}}\right)_n\right).$$
\end{lemma}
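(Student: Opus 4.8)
The plan is to read off the two sides of the claimed equality from the Poincar\'e--Hilbert series $R(h)=\sum_{n\geq 0}\dim(H_n)h^n$, using freeness for one side and cofreeness for the other.

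First I would exploit freeness. Since $H$ is free as a graded connected algebra, $H\simeq T(W)$ as graded algebras for a graded vector space $W$ concentrated in positive degrees; under such an isomorphism the augmentation ideal is $\bigoplus_{k\geq 1}W^{\otimes k}$ and $H^{+2}=\bigoplus_{k\geq 2}W^{\otimes k}$, so $(H^+/H^{+2})_n\simeq W_n$ for every $n$. Counting dimensions degree by degree in $T(W)=\bigoplus_{k\geq 0}W^{\otimes k}$ gives, in $K[[h]]$,
\[
R(h)=\frac{1}{1-G(h)},\qquad G(h):=\sum_{n\geq 1}\dim\bigl((H^+/H^{+2})_n\bigr)h^n ,
\]
equivalently $G(h)=1-R(h)^{-1}$.

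Dually, I would exploit cofreeness. Since $H$ is cofree as a graded connected coalgebra, $H\simeq T^c(V)$ as graded coalgebras, where $T^c(V)=\bigoplus_{k\geq 0}V^{\otimes k}$ carries the deconcatenation coproduct and $V$ is a graded vector space concentrated in positive degrees. The only point requiring a short verification is that $Prim(T^c(V))=V$: if $x=\sum_{k\geq 1}x_k$ with $x_k\in V^{\otimes k}$, then the component of $\tdelta(x_k)$ in $V^{\otimes 1}\otimes V^{\otimes(k-1)}$ is the image of $x_k$ under the canonical isomorphism $V^{\otimes k}\simeq V\otimes V^{\otimes(k-1)}$, so $\tdelta(x)=0$ forces $x_k=0$ for all $k\geq 2$. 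Hence $\g=Prim(H)\simeq V$ as graded spaces, and counting dimensions in $T^c(V)$ yields $R(h)=(1-P(h))^{-1}$ with $P(h)=\sum_{n\geq 1}\dim(\g_n)h^n$, that is $P(h)=1-R(h)^{-1}$.

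Comparing the two computations gives $G(h)=1-R(h)^{-1}=P(h)$ in $K[[h]]$; equating coefficients of $h^n$ yields $\dim(\g_n)=\dim\bigl((H^+/H^{+2})_n\bigr)$ for every $n\geq 1$ (and, as a byproduct, $R(h)(1-P(h))=1$). I do not expect a genuine obstacle here: once one observes that the algebra generators (via freeness) and the coalgebra cogenerators, which coincide with the primitives (via cofreeness), both satisfy the same equation $X(h)=1-R(h)^{-1}$, the result is immediate. The only mild subtleties are that ``free'' and ``cofree'' are understood in the graded sense throughout, and the identification $Prim(T^c(V))=V$ recorded above.
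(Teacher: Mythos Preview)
Your proof is correct and follows essentially the same route as the paper: use freeness to get $G(h)=1-R(h)^{-1}$ for the indecomposables, use cofreeness to get $P(h)=1-R(h)^{-1}$ for the primitives, and conclude $G(h)=P(h)$. The only cosmetic difference is that the paper identifies the cogenerating space directly with $\g$ (writing $H\simeq coT(\g)$), whereas you take an abstract $V$ and then verify $Prim(T^c(V))=V$; this extra verification is a welcome bit of care but not a different idea.
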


\begin{proof} As $H$ is free, there exists a graded subspace $V\subseteq H$, such that $H\approx T(V)$ as an algebra.
We shall consider the following formal series:
$$R(h)=\sum_{k=0}^\infty dim(H_k)h^k,\hspace{.5cm} P(h)=\sum_{k=0}^\infty dim(\g_k)h^k,\hspace{.5cm} 
G(h)=\sum_{k=0}^\infty dim(V_k)h^k.$$
As $H\approx T(V)$ as an algebra, $H^+=V\oplus H^{+2}$, so for all $n \geq 1$, $dim(V_n)=dim\left(\left(\frac{H^+}{H^{+2}}\right)_n\right)$.
Moreover $R(h)=\frac{1}{1-G(h)}$ or equivalently $G(h)=1-\frac{1}{R(h)}$.

As $H$ is cofree, it is isomorphic as a coalgebra to $coT(\g)$, the tensor coalgebra cogenerated by $\g$ (that is to say $T(\g) $ as a vector space,
with the deconcatenation product). So $R(h)=\frac{1}{1-P(h)}$ or equivalently $P(h)=1-\frac{1}{R(h)}=G(h)$.
Hence, for all $n \geq 1$, $dim(\g_n)=dim(V_n)$. \end{proof}

\begin{theo} \label{4}
Let $H$ be a graded, connected, free and cofree Hopf algebra. Let us choose a non-degenerate, homogeneous pairing on 
$\frac{\g}{\g\cap H^{+2}}$. There exists a non-degenerate Hopf pairing $\langle-,-\rangle$ on $H$, inducing the chosen
pairing on $\frac{\g}{\g\cap H^{+2}}$. Moreover, if the pairing on $\frac{\g}{\g\cap H^{+2}}$ is symmetric, then we can assume that the pairing 
on $H$ is symmetric.
\end{theo}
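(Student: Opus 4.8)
The plan is to construct $\langle-,-\rangle$ on $H$ by induction on the degree. Since $H$ is free and cofree, write $H\approx T(V)$ as an algebra and $H\approx coT(\g)$ as a coalgebra; by Lemma \ref{lemmeplus} one has $\dim(V_n)=\dim(\g_n)=\dim\big((H^+/H^{+2})_n\big)$ for every $n\ge 1$, and it is this numerical coincidence that makes the construction close up. Precisely, I construct by induction on $n$ a homogeneous bilinear form on $\bigoplus_{k\le n}H_k$ which (i) satisfies the Hopf pairing axioms in every degree $\le n$, (ii) is non-degenerate on $H_k\times H_k$ for each $k\le n$, (iii) induces the chosen pairing on $(\g/(\g\cap H^{+2}))_k$ for each $k\le n$, and (iv) in the symmetric case, is symmetric in degrees $\le n$. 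Degrees $0$ and $1$ are immediate: $\langle 1,1\rangle=1$, and in degree $1$ every element is primitive and indecomposable, so $\langle-,-\rangle_{\mid H_1\times H_1}$ is forced to equal the chosen pairing, which is non-degenerate (and symmetric if the chosen one is).

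For the inductive step at degree $n$, the axioms $\langle xy,z\rangle=\langle x\otimes y,\Delta z\rangle$ and $\langle z,xy\rangle=\langle\Delta z,x\otimes y\rangle$ with $x,y\in H^+$ and $z\in H_n$ determine the pairing of $H^{+2}_n$ against $H_n$, and of $H_n$ against $H^{+2}_n$, entirely from the pairing in degrees $<n$. I first verify that this ``forced'' part is well defined: since $H\approx T(V)$, the kernel of $H^+\otimes H^+\to H^{+2}$ is spanned by associators $a\otimes cb-ac\otimes b$, and $x\otimes y\mapsto\langle x\otimes y,\Delta z\rangle$ kills such an element (a direct computation using co-associativity of $\tdelta$ and the lower-degree axioms); one similarly checks the left- and right-forced values agree on $H^{+2}_n\times H^{+2}_n$, and that they are symmetric in the symmetric case. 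Applying Lemma \ref{2} to this forced data gives $\langle\g_n,H^{+2}_n\rangle=\langle H^{+2}_n,\g_n\rangle=(0)$ and $(H^{+2}_n)^\perp=\g_n$ on both sides; since $\dim H^{+2}_n=\dim H_n-\dim\g_n$ by Lemma \ref{lemmeplus}, the forced pairing thus induces \emph{perfect} pairings $H^{+2}_n\times(H_n/\g_n)\to K$ and $(H_n/\g_n)\times H^{+2}_n\to K$.

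It remains to extend the forced data to a non-degenerate form on $H_n$. Fix a decomposition $H_n=(\g\cap H^{+2})_n\oplus m_n\oplus\h_n\oplus w_n$ as in Lemma \ref{3}, where now we are \emph{free} to choose the summands and an adapted basis. By the two perfect pairings above, $\langle m_n,m_n\rangle$, $\langle(\g\cap H^{+2})_n,w_n\rangle$ and $\langle w_n,(\g\cap H^{+2})_n\rangle$ are invertible, while the remaining forced blocks vanish except for $\langle m_n,w_n\rangle$ and $\langle w_n,m_n\rangle$; replacing $w_n$ by a suitable translate in $m_n$ and then rescaling — exactly the adjustment of the $t_i$ in the proof of Lemma \ref{3} — I may assume $\langle m_n,w_n\rangle=(0)$ and $\langle(\g\cap H^{+2})_n,w_n\rangle=\mathrm{Id}$ without disturbing the other blocks. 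Then I set the still-unconstrained blocks: $\langle-,-\rangle_{\mid\h_n\times\h_n}$ equal to the chosen pairing on $(\g/(\g\cap H^{+2}))_n$ (identified with $\h_n$), and the $\h_n\times w_n$, $w_n\times\h_n$ and $w_n\times w_n$ blocks equal to $0$. A short determinant computation shows the resulting Gram matrix on $H_n$ is invertible, so the form is non-degenerate in degree $n$; the degree-$n$ Hopf axioms hold because the $H^{+2}_n$-rows and -columns carry precisely the forced values; the induced pairing on the indecomposable primitives is the chosen one by construction; and in the symmetric case the matrix is exactly the symmetric normal form of the remark following Lemma \ref{3}. This completes the induction and defines $\langle-,-\rangle$ on all of $H$.

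The whole difficulty lies in this inductive step, and within it in two points: that the part of the pairing forced by the Hopf axioms is well defined (this is where the freeness of $H$ enters), and that, by Lemma \ref{2} and the dimension equality of Lemma \ref{lemmeplus}, this forced part is already ``as non-degenerate as it can be'', hence extends. Granting those, reproducing the linear algebra of Lemma \ref{3} backwards — choosing the complements and the free blocks so as to get non-degeneracy, symmetry and the prescribed induced pairing at once — is routine bookkeeping; the only wrinkle is that in the non-symmetric case $\langle m_n,w_n\rangle$ and $\langle w_n,m_n\rangle$ need not be transposes, so the translation of $w_n$ kills only the first, which is harmless for invertibility.
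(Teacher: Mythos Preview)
Your proof is correct and follows the same inductive architecture as the paper's: the decomposition $H_n=(\g\cap H^{+2})_n\oplus m_n\oplus\h_n\oplus w_n$, Lemma~\ref{2}, and the dimension equality of Lemma~\ref{lemmeplus} are exactly the paper's ingredients, and your ``forced values'' on $H_n\times H^{+2}_n$ coincide with the paper's definition of $\phi^{(n)}(v)$ for $v\in w_n$. The difference is only in packaging: the paper builds an algebra morphism $\phi^{(n)}:H_{\langle n\rangle}\to H^*$ (so the left Hopf axiom is automatic), then checks it is a coalgebra morphism and proves injectivity via a separate argument using that $H^*$ is free; you instead construct the bilinear form directly, check both Hopf axioms as well-definedness of the two forcings, and read non-degeneracy off the block Gram matrix. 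Your route is a bit more symmetric in the two sides of the pairing and replaces the paper's injectivity argument by a cleaner dimension count, at the cost of having to verify consistency of the left and right forcings on $H^{+2}_n\times H^{+2}_n$, which the paper gets for free from $\phi^{(n)}$ being an algebra map.
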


\begin{proof} For all $n \geq 1$, let us choose a complement $m_n$ of $(H^{+2}\cap \g)_n$ in $(H^{+2})_n$, 
a complement $\h_n$ of $(H^{+2}\cap \g)_n$ in $\g_n$, and a complement $w_n$ of $(H^{+2}+\g)_n$ in $H_n$. Hence:
$$H_n=(H^{+2}\cap \g)_n\oplus m_n \oplus \h_n \oplus w_n.$$
Note that $\h_n$ is isomorphic, as a vector space, with $\left(\frac{\g}{\g \cap H^{+2}}\right)_n$. 
Hence, there is a pairing $\langle-,-\rangle$ on $\h_n$, making the restriction to $\h_n$ of the canonical projection onto 
$\left(\frac{\g}{\g \cap H^{+2}}\right)_n$ an isometry.\\
We put $V_n=\h_n\oplus w_n$; for all $n \geq 1$, $V_n$ is a complement of $(H^{+2})_n$ in $H_n$.
We denote by $H_{\langle n\rangle}$ the subalgebra of $H$ generated by $H_0\oplus\ldots \oplus H_n$. 
Then  $H_{\langle n\rangle}$ is freely generated by $V_1\oplus\ldots \oplus V_n$. Moreover, for all $k\leq n$:
$$\Delta(H_k) \subseteq \sum_{i=0}^k H_i \otimes H_{k-i}\subseteq H_{\langle n\rangle}\otimes H_{\langle n\rangle},$$
so $H_{\langle n\rangle}$ is a Hopf subalgebra of $H$. \\

We construct by induction on $n$ a homogeneous injective Hopf algebra morphism $\phi^{(n)}$ from $H_{\langle n \rangle}$ to $H^*$ such that:
\begin{enumerate}
\item for all $x,y \in \h_n$, $\phi^{(n)} (x)(y)=\langle x,y \rangle$.
\item $\phi^{(n)}_{\mid H_{\langle n-1 \rangle}}=\phi^{(n-1)}$.
\end{enumerate}
We first define $\phi^{(0)}$ by $\phi^{(0)}(1)=\varepsilon=1_{H^*}$.
Let us assume that $\phi^{(n-1)}$ is constructed. 

\begin{itemize}
\item for $v\in V_i$, $i\leq n-1$, $\phi^{(n)}(v)=\phi^{(n-1)}(v)$. This is an element of $H_i^*$, by homogeneity of $\phi^{(n-1)}$.
\item for $v\in \h_n$, $\phi^{(n)}(v) \in H_n^*$ is defined by:
$$\phi^{(n)}(v)(x)=\left\{\begin{array}{l}
\langle  v, x\rangle\mbox{ if }x\in \h_n,\\
0 \mbox{ if }x \in H^{+2},\\
0 \mbox{ if }x \in w_n,
\end{array}\right.$$ 
where $\langle-,-\rangle$ is the pairing on $\h_n$ defined earlier.
\item for $v\in w_n$, we define $\phi^{(n)}(v) \in H_n^*$ by 
$$\phi^{(n)}(v)(x)=\left\{\begin{array}{l}
0\mbox{ if }x \in \h_n,\\
0 \mbox{ if }w \in w_n,\\
\left( (\phi^{(n-1)})^{\otimes k}\circ \tdelta^{(k-1)}(v)\right)(v_1\otimes \ldots \otimes v_k)\mbox{ if }x=v_1\ldots v_k,
\end{array}\right.$$
where $v_1,\ldots,v_k$ are homogeneous elements of $V_1\oplus \ldots \oplus V_{n-1}$, $k\geq 2$.
As $H^{+2}_n=(H_{\langle n-1\rangle})_n=T(V_1\oplus \ldots \oplus V_{n-1})_n$, this perfectly defines $\phi(v)$.
\end{itemize}
As $H_{\langle n\rangle}$ is freely generated by $V_1\oplus \ldots \oplus V_n$, we extend $\phi^{(n)}$ to an algebra morphism 
from $H_{\langle n\rangle}$ to $H^*$. As $\phi^{(n)}$ sends an element of $V_i$ to an element of $H_i^*$ for all $i\leq n$, 
$\phi^{(n)}$ is homogeneous. It clearly statisfies the two points of the induction hypothesis. It remains to prove that it is an injective Hopf algebra morphism.
Let us first prove that $\phi^{(n)}$ is a Hopf algebra morphism. It is enough to prove that for $v \in V_1\oplus \ldots \oplus V_n$,
$\left(\phi^{(n)}\otimes \phi^{(n)}\right)\circ \Delta(v)=\Delta \circ \phi^{(n)}(v)$. Using the induction hypothesis, we can restrict ourselves
to $v \in V_n=\h_n \oplus w_n$, and finally to $v \in \h_n$ or $v \in w_n$. If $v \in \h_n$, then, by definition, $\phi^{(n)}(v)$ is orthogonal to $H^{+2}$, 
so $\phi^{(n)}(v)$ is primitive by lemma \ref{2}, with $H'=H^*$. 
As $v$ is also primitive, the required assertion is proved in this case. If $v \in w_n$, let $x=v_1\ldots v_k$ and $y=v_{k+1}\ldots v_{k+l}$,
where  $v_1,\ldots,v_{k+l}$ are homogeneous elements of $V_1\oplus \ldots \oplus V_{n-1}$. Then:
\begin{eqnarray*}
\left(\tdelta\circ \phi^{(n)}(v)\right)(x \otimes y)&=&\phi^{(n)}(v)(xy)\\
&=&\left( (\phi^{(n-1)})^{\otimes k+l}\circ \tdelta^{(k+l-1)}(v)\right)(v_1\otimes \ldots \otimes v_{k+l})\\
&=&\left( (\phi^{(n-1)})^{\otimes k+l}\circ\left( \tdelta^{(k-1)}\otimes \tdelta^{(l-1)}\right)
\circ \tdelta(v)\right)(v_1\otimes \ldots \otimes v_{k+l})\\
&=&\left(\left(\left(\tdelta^{(k-1)} \circ \phi^{(n-1)}\right)\otimes \left(\tdelta^{(l-1)}\circ \phi^{(n-1)}\right)\right)
\circ \tdelta(v)\right)(v_1\otimes \ldots \otimes v_{k+l})\\
 &=&\left(\left(\phi^{(n-1)}\otimes \phi^{(n-1)}\right)\circ \tdelta(v)\right)(x \otimes y)\\
 &=&\left(\left(\phi^{(n)}\otimes \phi^{(n)}\right)\circ \tdelta(v)\right)(x \otimes y).
\end{eqnarray*}
We used the induction hypothesis for the fourth equality. This proves the required assertion, so $\phi^{(n)}$ is a Hopf algebra morphism.\\

Let us now prove the injectivity of $\phi^{(n)}$. Let $x \in V_n$, such that $\phi^{(n)}(x) \in {H^*}^{+2}$.
Let us prove that $x=0$. By homogeneity, $\phi^{(n)}(x) \in \left({H^*}^{+2}\right)_n$.
As $\phi^{(n-1)}$ is injective, comparing the dimension of the homogeneous component of degree $i$ of $H_{\langle n-1 \rangle}$
and $H^*$ for all $i\leq n-1$, we deduce that $H^*_i \subseteq Im(\phi^{(n-1)})$ if $i\leq n-1$. 
So $ \left({H^*}^{+2}\right)_n \subseteq Im(\phi^{(n-1)})$.
Hence, there exists $y\in H_{\langle n-1 \rangle}$, homogeneous of degree $n-1$, such that $\phi^{(n)}(x)=\phi^{(n-1)}(y)$.
So $y \in (H_{\langle n \rangle})^{+2}$ and $x-y \in Ker(\phi^{(n)})$.  By the induction hypothesis, $\phi^{(n)}$ restricted
to the homogeneous components of degree $\leq n-1$ is injective, so if $x-y\neq 0$, it is a non-zero element of $Ker(\phi^{(n)})$ of minimal degree:
as $\phi^{(n)}$ is a Hopf algebra morphism, $x-y$ is primitive. So $x-y \in \h_n \oplus (\g \cap H^{+2})_n$.
Moreover, $y\in (H^{+2})_n$, so $x \in (\h_n \oplus (\g \cap H^{+2})_n \oplus m_n)\cap (\h_n \oplus w_n)=\h_n$.
As the pairing on $\h_n \times \h_n$ is non-degenerate, if $x \neq 0$, there exists $z \in \h_n$, such that 
$\phi^{(n)}(x)(z)=\langle x,z\rangle\neq 0$, so $\phi^{(n)}(x) \notin \g^\perp={H^*}^{+2}$: contradiction. So $x=0$.

This proves that $\phi^{(n)}(V_n) \cap  {H^*}^{+2}=(0)$ and $\phi^{(n)}_{\mid V_n}$ is injective.
As $\phi^{(n-1)}$ is an injective Hopf algebra morphism, we deduce that $\phi^{(n)}(V_k)\cap {H^*}^{+2}=(0)$ 
and $\phi^{(n)}_{\mid V_k}$ is injective for all $k\leq n$. As $H$ is cofree, $H^*$ is free, so $\phi^{(n)}(V_1\oplus \ldots \oplus V_n)$
generates a free subalgebra of $H^*$. As $\phi^{(n)}_{\mid V_1\oplus \ldots \oplus V_n}$ is injective and
$H_{\langle n \rangle}$ is freely generated by $V_1\oplus \ldots \oplus V_n$,  $\phi^{(n)}$ is injective.\\

{\it Conclusion}. Let $x \in H$. We put $\phi(x)=\phi^{(n)}(x)$ for any $n\geq deg(x)$. By the second point of the induction, this is well-defined.
As $\phi^{(n)}$ is an injective and homogeneous Hopf algebra morphism for all $n$, $\phi$ also is. Comparing the formal series of $H$ and $H^*$,
$\phi$ is an isomorphism. So it defines a non-degenerate, homogeneous Hopf pairing on $H$. By the first point of the induction, it implies 
the chosen pairing on $\frac{\g}{\g\cap H^{+2}}$.\\

Let us finally prove the symmetry of this pairing if the pairing on $\frac{\g}{\g\cap H^{+2}}$ is symmetric.
Let $x,y\in H_n$, let us prove that $\langle x,y\rangle=\langle y,x \rangle$ by induction on $n$. 
As $H_0=K$, it is obvious if $n=0$. If $n \geq 1$, then $\langle x,y\rangle=\phi^{(n)}(x)(y)$.
It is then enough to prove this for $x,y \in \h_n$, or $w_n$, or $(H^{+2})_n$.

{\it First case.} If $x \in (H^{+2})_n$, let us put $x=x_1x_2$, where $x_1,x_2$ are homogeneous, of degree $<n$. Then, by the induction hypothesis:
$$\langle x,y\rangle=\langle x_1\otimes x_2,\Delta(y)\rangle=\langle x_1\otimes x_2,\tdelta(y)\rangle=\langle \tdelta(y),x_1\otimes x_2\rangle
=\langle y,x\rangle.$$
The proof is similar if $y\in (H^{+2})_n$.

{\it Second subcase.} If $x\in w_n$ and $y\in \h_n$ or $w_n$, then by definition of $\phi^{(n)}$,  $\langle x,y\rangle=\langle y,x \rangle=0$.
The proof is similar if $y\in w_n$ and $x\in \h_n$ or $w_n$.

{\it Last subcase.} If $x,y \in \h_n$, then $\langle x,y\rangle=\langle y,x \rangle$ as the pairing on $\frac{\g}{\g\cap H^{+2}}$ is symmetric. \end{proof}\\

{\bf Remark.} It is possible to extend a symmetric pairing on  $\frac{\g}{\g\cap H^{+2}}$  to a non-symmetric, non-degenerate Hopf pairing on $H$:
it is enough to arbitrarily change the values of $\phi^{(n)}(v)(x)$ for $v\in \h_n$ and $x\in w_n$, or $v\in w_n$ and $x\in \h_n$. 

\begin{cor} \label{5}
Let $H$ be a graded, connected Hopf algebra, free and cofree. Then it is self-dual.
\end{cor}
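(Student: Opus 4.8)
The plan is to deduce this immediately from Theorem \ref{4}, the only non-formal ingredient being the existence of a suitable pairing on the space of indecomposable primitive elements. First I would observe that, since $H$ is graded and connected with a well-defined Poincar\'e--Hilbert series, every homogeneous component $H_n$ is finite-dimensional; hence so is each component $\left(\frac{\g}{\g\cap H^{+2}}\right)_n$. Choosing, for every $n\geq 1$, a basis of that component and declaring it orthonormal produces a homogeneous, symmetric, non-degenerate bilinear form on $\frac{\g}{\g\cap H^{+2}}$.

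Next I would feed this form into Theorem \ref{4}: applied to $H$ (which is free and cofree by hypothesis) together with the pairing just constructed, the theorem yields a non-degenerate Hopf pairing $\langle-,-\rangle:H\times H\longrightarrow K$, which moreover may be taken symmetric since the chosen pairing on $\frac{\g}{\g\cap H^{+2}}$ is symmetric. By part (2) of Definition \ref{d1}, the existence of such a pairing is exactly the assertion that $H$ is self-dual.

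There is essentially no obstacle here: the corollary is just a repackaging of Theorem \ref{4}. The only point needing a word of justification is the first step, the existence of a non-degenerate symmetric form on the indecomposable primitives, and this is immediate from the finite-dimensionality of the homogeneous components. One could note in passing that the symmetry --- hence self-duality of the resulting isomorphism $H\cong H^*$ --- relies on $\mathrm{char}(K)\neq 2$ via Lemma \ref{3}, whereas non-degeneracy alone, which is all that the word "self-dual" requires in Definition \ref{d1}, would hold without that hypothesis.
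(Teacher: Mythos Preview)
Your proposal is correct and follows exactly the paper's approach: choose a non-degenerate homogeneous pairing on $\frac{\g}{\g\cap H^{+2}}$ and invoke Theorem~\ref{4}. The paper's proof is a single sentence to that effect; your version simply spells out why such a pairing exists (finite-dimensionality of the homogeneous pieces) and adds the remark about symmetry versus mere non-degeneracy, neither of which the paper bothers with.
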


\begin{proof} It is enough to choose a non-degenerate, homogeneous pairing on $\frac{\g}{\g\cap H^{+2}}$ and then to apply theorem \ref{4}. \end{proof}

\subsection{Isomorphisms of free and cofree Hopf algebras}

\begin{prop}\label{6}
Let $H$ and $H'$ be two free and cofree Hopf algebras, both with a symmetric, non-degenerate Hopf pairing,  such that $\left(\frac{\g}{\g\cap H^{+2}}\right)_n$ 
and $\left(\frac{\g'}{\g'\cap {H'}^{+2}}\right)_n$, endowed with the pairings induced from the Hopf pairings, are isometric for all $n\geq 1$. 
Then there exists a Hopf algebra isomorphism from $H$ to $H'$ which is an isometry.
\end{prop}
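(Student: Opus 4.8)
The plan is to imitate, \emph{mutatis mutandis}, the inductive construction in the proof of Theorem \ref{4}, but with the graded dual $H^{*}$ replaced throughout by $H'$, and carried out simultaneously for $H$ and for $H'$. Concretely, I will construct by induction on $n$ (together with the analogous data obtained by exchanging the roles of $H$ and $H'$) a homogeneous injective Hopf algebra morphism $\psi^{(n)}\colon H_{\langle n\rangle}\longrightarrow H'$, where $H_{\langle n\rangle}$ is the Hopf subalgebra of $H$ generated by the components of degree $\leq n$, such that $\psi^{(n)}$ is an isometry onto its image and $\psi^{(n)}_{\mid H_{\langle n-1\rangle}}=\psi^{(n-1)}$. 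Passing to the limit $\psi=\bigcup_n\psi^{(n)}$ gives an injective homogeneous Hopf morphism $\psi\colon H\to H'$ which is an isometry onto its image; since the same construction run in the other direction produces an injective homogeneous morphism $H'\to H$, one has $\dim H_n=\dim H'_n$ for all $n$, so $\psi$ is bijective, which is the desired isometric Hopf isomorphism.

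For the inductive step I fix, as in Lemma \ref{3}, decompositions $H_n=(\g\cap H^{+2})_n\oplus m_n\oplus\h_n\oplus w_n$ and $H'_n=(\g'\cap{H'}^{+2})_n\oplus m'_n\oplus\h'_n\oplus w'_n$ adapted to the Hopf pairings (so that the block forms of Lemma \ref{3} hold), and put $V_n=\h_n\oplus w_n$; then $H$ is freely generated by $\bigoplus_n V_n$, so it is enough to define $\psi^{(n)}$ on $V_n$ and extend to the free algebra $H_{\langle n\rangle}$. On $\h_n$ I take $\psi^{(n)}$ to be the composite $\h_n\cong(\g/\g\cap H^{+2})_n\xrightarrow{\ \sim\ }(\g'/\g'\cap{H'}^{+2})_n\cong\h'_n$ of the canonical isomorphisms with the given isometry; its image lies in $\mathrm{Prim}(H')$, so there is no coproduct obstruction, and it respects the pairings by the very definition of the induced ones. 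On a basis $v_1,\dots,v_r$ of $w_n$ I want $\psi^{(n)}(v_i)\in H'_n$ with $\tdelta_{H'}\big(\psi^{(n)}(v_i)\big)=(\psi^{(n-1)}\otimes\psi^{(n-1)})\big(\tdelta_H(v_i)\big)$ — the right-hand side being already defined since $\tdelta_H(v_i)\in H_{\langle n-1\rangle}\otimes H_{\langle n-1\rangle}$ — together with $\psi^{(n)}(v_i)\perp\h'_n$ and $\langle\psi^{(n)}(v_i),\psi^{(n)}(v_j)\rangle_{H'}=0$ for all $i,j$. The coproduct equation is solvable because $H'$ is cofree: identifying $H'$ with the cotensor coalgebra $coT(\g')$, the kernel of $\tdelta_{H'}$ on $H'_n$ is $\g'_n$ and its image is exactly the space of coassociative cocycles, to which the right-hand side belongs (being the image of a coassociative element under the coalgebra morphism $\psi^{(n-1)}\otimes\psi^{(n-1)}$); the solution set is thus a coset of $\g'_n=\h'_n\oplus(\g'\cap{H'}^{+2})_n$, and by Lemma \ref{3} the $\h'_n$-part of the freedom is used, uniquely, to arrange $\psi^{(n)}(v_i)\perp\h'_n$. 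Once $\psi^{(n)}$ has been extended to $H_{\langle n\rangle}$ one checks, exactly as in Theorem \ref{4}, that it is a Hopf algebra morphism (coproduct compatibility reduces, via freeness and the induction hypothesis, to the generators $\h_n$, where both sides are primitive, and $w_n$, where it holds by the same computation with $\tdelta$) and an isometry (the Hopf-pairing axioms reduce $\langle\psi^{(n)}(x),\psi^{(n)}(y)\rangle_{H'}=\langle x,y\rangle_H$ to the case of generators, where $(\h_n,\h_n)$ gives the induced pairing matched by the chosen isometry, and $(\h_n,w_n)$, $(w_n,w_n)$ give $0$ on both sides by the orthogonality relations of Lemma \ref{3} and the conditions imposed on $\psi^{(n)}(w_n)$). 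Injectivity of $\psi^{(n)}$ is then immediate: if $\psi^{(n)}(x)=0$ with $\deg x=d\leq n$, then $\langle x,y\rangle_H=\langle\psi^{(n)}(x),\psi^{(n)}(y)\rangle_{H'}=0$ for every $y\in H_d\subseteq H_{\langle n\rangle}$, hence $x=0$ by non-degeneracy of $\langle-,-\rangle_H$.

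The main obstacle is the remaining freedom in the choice of $\psi^{(n)}(v_i)$: after normalising $\psi^{(n)}(v_i)\perp\h'_n$ one still has a free summand in $(\g'\cap{H'}^{+2})_n$, and it must be used to kill the Gram matrix $\big(\langle\psi^{(n)}(v_i),\psi^{(n)}(v_j)\rangle_{H'}\big)$. Writing $\psi^{(n)}(v_i)=a_i+b_i+d_i$ along $(\g'\cap{H'}^{+2})_n\oplus m'_n\oplus w'_n$, the components $b_i$ and $d_i$ are forced by the coproduct equation while $a_i$ is free; the block form of Lemma \ref{3} turns the vanishing of the Gram matrix into the system $\langle a_i,d_j\rangle+\langle a_j,d_i\rangle=-\langle b_i,b_j\rangle$, whose right-hand side is a fixed \emph{symmetric} matrix. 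Here the parallel induction and the standing hypothesis $\mathrm{char}\,K\neq2$ enter. The injective morphisms built in both directions on the spaces of degree-$n$ decomposables $(H^{+2})_n=(H_{\langle n-1\rangle})_n$ and ${(H'}^{+2})_n$ (and on their primitive parts) force $\dim(H^{+2})_n=\dim{(H'}^{+2})_n$ and $\dim(\g\cap H^{+2})_n=\dim(\g'\cap{H'}^{+2})_n$, so that $\psi^{(n-1)}$ restricts to isomorphisms $(H^{+2})_n\xrightarrow{\sim}{(H'}^{+2})_n$ and $(\g\cap H^{+2})_n\xrightarrow{\sim}(\g'\cap{H'}^{+2})_n$ and $\dim w_n=\dim w'_n=r$; using that the pairing $w_n\times(\g\cap H^{+2})_n$ is non-degenerate (Lemma \ref{3}) one deduces that $d_1,\dots,d_r$ are linearly independent in $w'_n$, hence a basis. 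Since $(\g'\cap{H'}^{+2})_n\times w'_n$ is a non-degenerate pairing, one may then choose $a_i$ with $\langle a_i,d_j\rangle=-\tfrac12\langle b_i,b_j\rangle$ for all $j$; the symmetry of the $m'_n$-block of the pairing gives $\langle\psi^{(n)}(v_i),\psi^{(n)}(v_j)\rangle_{H'}=0$, as required. This completes the induction, and hence the proof.
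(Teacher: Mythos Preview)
Your argument is correct, but it follows a genuinely different route from the paper's.

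The paper proceeds in the opposite order: it first defines $\phi^{(n)}$ on $w_n$ so as to \emph{force the isometry}, and then deduces the coproduct compatibility from the Hopf-pairing axioms. Concretely, Lemma~\ref{3} is applied to both $H_n$ and $H'_n$ to obtain bases in which the pairing matrices have the same block form
\[
\begin{pmatrix}0&0&0&I\\0&A&0&0\\0&0&B&0\\I&0&0&0\end{pmatrix};
\]
this is possible because $\psi^{(n-1)}$ already carries $m_n$ isometrically to a suitable $m'_n$ (so the $A$-blocks match) and the $B$-blocks match by the hypothesis on $\h_n\cong\h'_n$. One then simply declares $t_i\mapsto t'_i$ on the $w$-parts, and the coproduct identity on $w_n$ is verified by pairing both sides against arbitrary $y'\otimes z'$ and using the isometry already in hand. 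No lifting via cofreeness and no solution of a Gram-matrix system is needed.

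Your approach instead lifts the coproduct first (using cofreeness of $H'$) and then uses the remaining $(\g'\cap H'^{+2})_n$-freedom to kill the Gram matrix on $w_n$. This works, and your justification that the $d_i$ are independent is in fact cleaner than you state it: since $\psi^{(n-1)}$ is an isometry on $(H^{+2})_n$, the coproduct condition forces $\langle\psi^{(n)}(v_i),\psi^{(n-1)}(x)\rangle_{H'}=\langle v_i,x\rangle_H$ for all $x\in(\g\cap H^{+2})_n$, and non-degeneracy of the $w_n\times(\g\cap H^{+2})_n$ pairing then transports to $H'$. The trade-off is that your route makes the $\mathrm{char}\,K\neq 2$ hypothesis do real work (in the $\tfrac12$ of the Gram-matrix solution) and requires the parallel construction for the dimension equalities, whereas the paper's matrix-matching is shorter and makes the isometry tautological.
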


\begin{proof} We inductively construct a homogeneous isomorphism $\phi^{(n)}:H_{\langle n\rangle} \longrightarrow H_{\langle n\rangle}'$, such that:
\begin{enumerate}
\item $\phi^{(n)}_{\mid H_{\langle n-1\rangle}}=\phi^{(n-1)}$ if $n \geq 1$.
\item $\langle\phi^{(n)}(x),\phi^{(n)}(y)\rangle=\langle x,y \rangle$ for all $x,y\in H_{\langle n\rangle}$.
\end{enumerate}
We define $\phi^{(0)}:H_{\langle 0\rangle}=K\longrightarrow H'_{\langle 0\rangle}$ by $\phi^{(0)}(1)=1$.
Let us assume that $\phi^{(n-1)}$ is constructed.
We choose a decomposition $H_n=(\g \cap H^{+2})_n \oplus m_n \oplus \h_n \oplus w_n$ as in lemma \ref{3}.
As $\phi^{(n-1)}$ is a Hopf algebra isomorphism, $\phi^{(n-1)}((H_{\langle n-1\rangle})_n)=\phi^{(n-1)}(H^{+2})_n=(H'_{\langle n-1\rangle})_n={H'}^{+2}_n$.
As a consequence, $\phi(m_n)=m'_n$ is a complement of $(\g' \cap {H'}^{+2})_n$ in ${H'}^{+2}_n$.
Moreover, as the isometry $\phi^{(n-1)}$ sends $(H_{\langle n-1 \rangle})_n=H^{+2}_n$ to $(H'_{\langle n-1\rangle})_n={H'}^{+2}_n$,
it induces an isometry from $(\g \cap H^{+2})_n$ to $(\g'\cap {H'}^{+2})_n$ and from $m_n$ to $m'_n$.

Let us choose a complement $\h_n$ of $(\g \cap H^{+2})_n$ in $\g_n$ and a complement $\h'_n$ of $(\g' \cap {H'}^{+2})_n$ in $\g'_n$.
As $\h_n$ is isometric with $\left(\frac{\g}{\g\cap H^{+2}}\right)_n$ and $\h'_n$ with $\left(\frac{\g'}{\g'\cap {H'}^{+2}}\right)_n$,
there exists an isometry $\phi:\h_n \longrightarrow \h'_n$.

Using lemma \ref{3}, there exists a basis $(x_i)_{i\in I}\cup (y_j)_{j\in J} \cup (z_k)_{k\in K} \cup (t_i)_{i\in I}$ adapted to the decomposition
$H_n=(\g\cap H^{+2})_n \oplus m_n \oplus \h_n \oplus w_n$, and a basis 
$(\phi^{(n-1)}(x_i))_{i\in I}\cup (\phi^{(n-1)}(y_j))_{j\in J} \cup (\phi(z_k))_{k\in K} \cup (t'_i)_{i\in I}$, such that the matrices of the pairing of
$H_n$ and $H'_n$ in these bases are both equal to:
$$\left(\begin{array}{cccc}
0&0&0&I\\
0&A&0&0\\
0&0&B&0\\
I&0&0&0
\end{array}\right).$$
We then define $\phi^{(n)}$ on $H_0\oplus \ldots \oplus H_n$ by putting $\phi^{(n)}(x)=x$ if $x \in H_{\langle n-1\rangle}$, 
$\phi^{(n)}(x)=\phi(x)$ if $x \in \h_n$ and $\phi^{(n)}(t_i)=t'_i$ for all $i\in I$. As $H_{\langle n\rangle}$ is freely generated
by $\h_1\oplus w_1\oplus \ldots \oplus \h_n \oplus w_n$, $\phi^{(n)}$ is extended to an algebra morphism from $H_{\langle n\rangle}$
to $H'_{\langle n\rangle}$. As its image contains $\h'_1\oplus w'_1\oplus \ldots \oplus \h'_n \oplus w'_n$, which freely generates $H'_{\langle n\rangle}$,
it is surjective. As $\phi^{(n)}$ induces a linear isomorphism from $\h_1\oplus w_1\oplus \ldots \oplus \h_n \oplus w_n$ to
$\h'_1\oplus w'_1\oplus \ldots \oplus \h'_n \oplus w'_n$, $\phi^{(n)}$ is an isomorphism. By construction (for $i=n$) and by the induction hypothesis (for $i<n$),
$\phi^{(n)}_{\mid H_i}$ is an isometry from $H_i$ to $H'_i$ for all $i\leq n$.\\

Let us prove that $\phi^{(n)}$ is a Hopf algebra isomorphism. It is enough to prove that $\Delta \circ \phi^{(n)}(x)=(\phi^{(n)} \otimes \phi^{(n)})\circ \Delta(x)$
for $x \in \h_1\oplus w_1\oplus \ldots \oplus \h_n \oplus w_n$. If $x \in \h_1\oplus w_1\oplus \ldots \oplus \h_{n-1} \oplus w_{n-1}$, 
this comes from the induction hypothesis. If $x \in \h_n$, then both $x$ and $\phi^{(n)}(x) \in \h'_n$ are primitive, so the result is obvious. 
Let us assume that $x \in w_n$. Let us take $y' \in H'_k$, $z'\in H'_l$, with $k+l=n$. As $\phi^{(n)}(H_i)=H'_i$ if $i\leq n$, 
we put $y'=\phi^{(n)}(y)$ and $z'=\phi^{(n)}(z)$:
\begin{eqnarray*}
\langle (\phi^{(n)}\otimes \phi^{(n)})\circ \Delta(x),y'\otimes z'\rangle&=&\langle (\phi^{(n)} \otimes \phi^{(n)})\circ \Delta(x),\phi^{(n)}(y)\otimes \phi^{(n)}(z)\rangle\\
&=&\langle\Delta(x),y \otimes z\rangle\\
&=&\langle x,yz\rangle\\
&=&\langle \phi^{(n)}(x),\phi^{(n)}(yz)\rangle\\
&=&\langle \phi^{(n)}(x),\phi^{(n)}(y)\phi^{(n)}(z)\rangle\\
&=&\langle \Delta \circ \phi^{(n)}(x),\phi^{(n)}(y)\otimes \phi^{(n)}(z)\rangle\\
&=&\langle \Delta \circ \phi^{(n)}(x),y'\otimes z'\rangle.
\end{eqnarray*}
By homogeneity, we deduce that $(\phi^{(n)}\otimes \phi^{(n)})\circ \Delta(x)-\Delta \circ \phi^{(n)}(x) \in (H' \otimes H')^\perp=(0)$, as the pairing of $H'$ is non-degenerate.\\

It remains to show that $\phi^{(n)}$ is an isometry: let us prove that $\langle x,y\rangle=\langle \phi^{(n)}(x),\phi^{(n)}(y)\rangle$, for $x,y \in H_{\langle n\rangle}$.
We can assume that $x$ is homogenous of degree $k$. If $k\leq n$, by homogeneity of $\phi^{(n)}$ and the pairing, this is true as 
$\phi^{(n)}_{\mid H_n}$ is an isometry from $H_n$ to $H'_n$. If $k>n$, as $H_{\langle n\rangle}$ us generated by elements of degree $\leq n$,
we can assume that $x=x_1\ldots x_k$, with $x_i$ homogeneous of degree $\leq n$ for all $i$. Then:
\begin{eqnarray*}
\langle \phi^{(n)}(x),\phi^{(n)}(y)\rangle&=&\langle \phi^{(n)}(x_1)\ldots \phi^{(n)}(x_k), \phi^{(n)}(y)\rangle\\
&=&\langle \phi^{(n)}(x_1)\otimes \ldots \otimes \phi^{(n)}(x_k), \Delta^{(k-1)}\circ \phi^{(n)}(y)\rangle\\
&=&\langle \phi^{(n)}(x_1)\otimes \ldots \otimes \phi^{(n)}(x_k), (\phi^{(n)}\otimes \ldots \otimes \phi^{(n)}) \circ \Delta^{(k-1)}(y)\rangle\\
&=&\langle x_1\otimes \ldots \otimes x_k,\Delta^{(k-1)}(y)\rangle\\
&=&\langle x_1\ldots x_k,y\rangle\\
&=&\langle x,y\rangle.
\end{eqnarray*}

{\it Conclusion.} We define $\phi:H \longrightarrow H'$ by $\phi(x)=\phi^{(n)}(x)$ for all $x\in H_{\langle n\rangle}$. By the first point of the induction,
this does not depend of the choice of $n$. Moreover, $\phi$ is clearly an isomorphism of Hopf algebras and an isometry. \end{proof}\\

{\bf Remark.} Note that the pairings used in proposition \ref{6} are any Hopf pairings, not necessarily the Hopf pairings obtained by extension of a non-degenerate
bilinear form on $\frac{\g}{\g\cap H^{+2}}$ in theorem \ref{4}.

\begin{theo} \label{7}
Let $H$ and $H'$ be two connected, graded Hopf algebras, both free and cofree. The following conditions are equivalent:
\begin{enumerate}
\item $H$ and $H'$ are isomorphic graded Hopf algebras.
\item $\frac{\g}{\g \cap H^{+2}}$ and $\frac{\g'}{\g' \cap H'^{+2}}$ are isomorphic graded spaces.
\item $H$ and $H'$ have the same Poincaré-Hilbert formal series.
\end{enumerate}
\end{theo}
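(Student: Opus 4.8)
The plan is to prove $(1)\Rightarrow(3)\Rightarrow(2)\Rightarrow(1)$. The implication $(1)\Rightarrow(3)$ is immediate, a graded Hopf algebra isomorphism preserving the dimension of every homogeneous component. For $(2)\Rightarrow(1)$ I would use what is already available. Fix a graded linear isomorphism $\theta:\frac{\g}{\g\cap H^{+2}}\longrightarrow\frac{\g'}{\g'\cap {H'}^{+2}}$, put on $\frac{\g}{\g\cap H^{+2}}$ any non-degenerate, symmetric, homogeneous bilinear form (e.g.\ declare a fixed homogeneous basis orthonormal), and transport it along $\theta$, so that $\theta$ becomes degreewise an isometry. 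By Theorem \ref{4} these forms extend to symmetric, non-degenerate Hopf pairings on $H$ and $H'$ inducing them on the spaces of indecomposable primitives; the hypotheses of Proposition \ref{6} are then met, and it produces a Hopf algebra isomorphism $H\longrightarrow H'$.

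The substantial implication is $(3)\Rightarrow(2)$, and I would in fact prove directly $(3)\Rightarrow(1)$, which yields $(2)$ a fortiori. Assume $H$ and $H'$ have the same series $R(h)$. By Lemma \ref{lemmeplus}, $\dim(\g_n)=\dim((H^+/H^{+2})_n)$ is determined by $R(h)$ (one has $P(h)=1-1/R(h)$ there), so $\dim(\g_n)=\dim(\g'_n)$ for all $n$. I then build a graded Hopf algebra isomorphism $\phi:H\to H'$ as an increasing union of homogeneous Hopf algebra isomorphisms $\phi^{(n)}:H_{\langle n\rangle}\longrightarrow H'_{\langle n\rangle}$ with $\phi^{(n)}_{\mid H_{\langle n-1\rangle}}=\phi^{(n-1)}$, in the spirit of Theorem \ref{4} and Proposition \ref{6}. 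Suppose $\phi^{(n-1)}$ built. As it is a Hopf algebra isomorphism, it maps $(H^{+2})_n=(H_{\langle n-1\rangle})_n$ onto $({H'}^{+2})_n$ compatibly with $\tdelta$ (whose values on degree-$n$ elements lie in $H_{\langle n-1\rangle}\otimes H_{\langle n-1\rangle}$), hence restricts to an isomorphism $\g_n\cap(H^{+2})_n\longrightarrow\g'_n\cap({H'}^{+2})_n$; together with $\dim(\g_n)=\dim(\g'_n)$ this gives $\dim\left(\left(\frac{\g}{\g\cap H^{+2}}\right)_n\right)=\dim\left(\left(\frac{\g'}{\g'\cap {H'}^{+2}}\right)_n\right)$, which is already $(2)$. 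To extend $\phi^{(n-1)}$ I choose complements $H_n=(\g\cap H^{+2})_n\oplus m_n\oplus\h_n\oplus w_n$, so $V_n:=\h_n\oplus w_n$ is a complement of $(H^{+2})_n$; I set $\phi^{(n)}$ on $\h_n$ to be a linear isomorphism onto a complement $\h'_n$ of $\g'_n\cap({H'}^{+2})_n$ in $\g'_n$ (possible by the dimension equality, and automatically a coalgebra map since both spaces are primitive), and $\phi^{(n)}(v)$ for $v\in w_n$ to be an element $\zeta\in({H'}^+)_n$ whose reduced coproduct is $(\phi^{(n-1)}\otimes\phi^{(n-1)})(\tdelta(v))$. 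One then checks, exactly as in Theorem \ref{4}, that $\phi^{(n)}(V_n)$ is a complement of $({H'}^{+2})_n$, that $\phi^{(n)}$ extends by freeness of $H$ and $H'$ to a Hopf algebra isomorphism $H_{\langle n\rangle}\to H'_{\langle n\rangle}$ restricting to $\phi^{(n-1)}$, the Hopf-morphism condition on $w_n$ reducing to a coassociativity computation as there.

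The one non-formal point, and the main obstacle, is the existence of $\zeta$: I must know that $\xi:=(\phi^{(n-1)}\otimes\phi^{(n-1)})(\tdelta(v))$ lies in the image of $\tdelta:({H'}^+)_n\longrightarrow(({H'}^+)\otimes({H'}^+))_n$. Now $\xi$ satisfies the coassociativity relation $(\tdelta\otimes\mathrm{id})(\xi)=(\mathrm{id}\otimes\tdelta)(\xi)$, since $\tdelta(v)$ does and $\phi^{(n-1)}$ is a coalgebra isomorphism in degrees $<n$; and in a \emph{cofree} graded conilpotent coalgebra --- which $H'\cong coT(\g')$ is --- every element of $(({H'}^+)\otimes({H'}^+))_n$ satisfying this relation is the reduced coproduct of some element of $({H'}^+)_n$ (for $coT(\g')$ this is a direct check on tensors). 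This is exactly where cofreeness of $H'$ is used, and it is the crux of the whole argument. Moreover $\zeta$ can be chosen so that $\phi^{(n)}$ stays injective modulo decomposables: since $v\notin(\g+H^{+2})_n$ we have $\tdelta(v)\notin\tdelta((H^{+2})_n)$, hence $\xi\notin\tdelta(({H'}^{+2})_n)$ and so $\zeta\notin(\g'+{H'}^{+2})_n$. Passing to the limit gives the graded Hopf algebra isomorphism $\phi:H\to H'$, which proves $(3)\Rightarrow(1)$ and a fortiori $(3)\Rightarrow(2)$.
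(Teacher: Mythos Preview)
Your proof is correct. The implications $(1)\Rightarrow(3)$ and $(2)\Rightarrow(1)$ are handled exactly as in the paper. The difference lies in $(3)\Rightarrow(2)$.

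The paper argues by contraposition: if $(2)$ first fails in degree $n$, it invokes the proofs of Theorem~\ref{4} and Proposition~\ref{6} to obtain an isomorphism $H_{\langle n-1\rangle}\cong H'_{\langle n-1\rangle}$, and then uses the dimension identity $\dim H_n=\dim(H^{+2})_n+\dim(\h_n)+\dim((\g\cap H^{+2})_n)$ coming from the pairing-based decomposition of Lemma~\ref{3} to deduce $\dim H_n\neq\dim H'_n$. So the paper's route leans entirely on the self-duality machinery already built.

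You instead prove $(3)\Rightarrow(1)$ directly, and the crux is the lifting step: for $v\in w_n$ the element $\xi=(\phi^{(n-1)}\otimes\phi^{(n-1)})\tdelta(v)$ satisfies $(\tdelta\otimes 1)(\xi)=(1\otimes\tdelta)(\xi)$, and in a cofree connected coalgebra this forces $\xi\in\mathrm{im}(\tdelta)$. That claim is correct and elementary: writing $H'\cong coT(\g')$ and decomposing $\xi=\sum_{i,j\geq 1}\xi_{i,j}$ with $\xi_{i,j}\in(\g')^{\otimes i}\otimes(\g')^{\otimes j}$, the cocycle condition reads $\xi_{p+q,r}=\xi_{p,q+r}$ in $(\g')^{\otimes(p+q+r)}$ for all $p,q,r\geq 1$, so $\xi_{i,j}$ depends only on $i+j$ and $\xi=\tdelta\bigl(\sum_k w_k\bigr)$. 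Your injectivity and complement checks then go through as you indicate (the reference to ``exactly as in Theorem~\ref{4}'' is slightly loose, since that proof uses the pairing, but the pairing-free argument you sketch via $\tdelta$ is valid). This approach avoids Lemma~\ref{3} and the Hopf pairing altogether for this implication, at the cost of isolating the exactness property of $\tdelta$ in $coT$ as a separate (easy) lemma; the paper's approach recycles the heavier results already proved but introduces no new ingredient.
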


\begin{proof} Clearly, $1 \Longrightarrow 2,3$.\\

$2 \Longrightarrow 1$. We choose non-degenerate, symmetric, isometric pairings on $\left(\frac{\g}{\g \cap H^{+2}}\right)_n$
 and $\left(\frac{\g'}{\g' \cap H'^{+2}}\right)_n$  for all $n \geq 1$. 
By theorem \ref{4}, there exists symmetric, non-degenerate Hopf pairings on $H$ and $H'$,  and inducing the chosen
pairings on $\frac{\g}{\g \cap H^{+2}}$ and $\frac{\g'}{\g' \cap H'^{+2}}$. By proposition \ref{6}, $H$ and $H'$ are isomorphic.\\

$3 \Longrightarrow 2$. We proceed by contraposition: let us assume that $\frac{\g}{\g \cap H^{+2}}$ and $\frac{\g'}{\g' \cap H'^{+2}}$ 
are not isomorphic graded spaces. There exists an integer $n$, such that $\left(\frac{\g}{\g \cap H^{+2}}\right)_i$ and $\left(\frac{\g'}{\g' \cap H'^{+2}}\right)_i$ 
are isomorphic spaces if $i<n$ and $\left(\frac{\g}{\g \cap H^{+2}}\right)_n$ and $\left(\frac{\g'}{\g' \cap H'^{+2}}\right)_n$ are not isomorphic spaces.
We choose non-degenerate isometric pairings on $\left(\frac{\g}{\g \cap H^{+2}}\right)_i$ and $\left(\frac{\g'}{\g' \cap H'^{+2}}\right)_i$ for all $i<n$.
From the proof of theorem \ref{4}, we can extend them to pairings on $H_{\langle n-1\rangle}$ and $H'_{\langle n-1 \rangle}$. From the proof of 
proposition \ref{6}, $H_{\langle n-1\rangle}$ and $H'_{\langle n-1 \rangle}$ are isomorphic Hopf algebras. As a consequence:
\begin{eqnarray*}
dim(H^{+2}_n)\:=\: dim((H_{\langle n-1 \rangle})_n)&=&dim((H_{\langle n-1 \rangle})_n)\:=\: dim(H'^{+2}_n),\\
dim(\g\cap H^{+2}_n)\:=\: dim((\g \cap H_{\langle n-1 \rangle})_n)&=&dim((\g' \cap H_{\langle n-1 \rangle})_n)\:=\: dim(\g' \cap H'^{+2}_n).
\end{eqnarray*}
Using a decomposition of lemma \ref{3} for $H$, we deduce:
\begin{eqnarray*}
dim(H_n)&=&dim(H^{+2}_n)+dim(\h_n)+dim(w_n)\\
&=&dim(H^{+2}_n)+dim\left(\left(\frac{g}{\g \cap H^{+2}}\right)_n \right)+dim((\g \cap H^{+2})_n).
\end{eqnarray*}
Using a decomposition of lemma \ref{3} for $H'$ and combining the different equalities, we obtain:
$$dim(H_n)-dim(H'_n)=dim\left(\left(\frac{g}{\g \cap H^{+2}}\right)_n \right)-dim\left(\left(\frac{g'}{\g' \cap H'^{+2}}\right)_n \right).$$
So this is not zero: $H$ and $H'$ do not have the same formal series. \end{proof} \\

{\bf Remark.} This result immediately implies that $\FQSym$ and $\H_{ho}$ are isomorphic, proving again a result of \cite{FoissyUnterberger}.
Similarly, $\PQSym$ and $\H_o$ are isomorphic, as proved in \cite{Foissy4}; $R\Pi$ and $S\Pi$ are isomorphic, as proved in \cite{Aguiar3}.

\section{Formal series of a free and cofree Hopf algebra in characteristic zero}

In all this section, we assume that the characteristic of the base field is $0$.

\subsection{General preliminary results}

We first put here together several technical results. \\

{\bf Notations}. Let $A$ be a graded connected Hopf algebra.
\begin{itemize}
\item $I_A$ is the ideal of $A$ generated by the commutators of $A$. The quotient $A_{ab}=A/I_A$ is a graded, connected, commutative Hopf algebra.
\item $C_A$ is the greatest cocommutative subcoalgebra of $A$. It is a graded, connected, cocommutative Hopf subalgebra of $A$.
\item $Prim(A)$ is the Lie algebra of the primitive elements of $A$.
\item $coPrim(A)$ is the Lie coalgebra of $A$, that is to say $\frac{A^+}{A^{+2}}$. The Lie cobracket is given by
$\delta(\overline{x})=(\pi \otimes \pi)\circ(\tdelta-\tdelta^{op})(x)$, where $\pi$ is the canonical projection from $A^+$ to $coPrim(A)$.
Note that $coPrim(A)$ is the space of indecomposable elements, denoted by $Q(A)$ in \cite{Milnor}.
\end{itemize}

Let $Prim(A)_{ab}=\frac{Prim(A)}{[Prim(A),Prim(A)]}$. As the Lie algebra of $Prim(A_{ab})$ is abelian, there exists a natural map:
$$\pi_A:\left\{\begin{array}{rcl}
Prim(A)_{ab}&\longrightarrow&Prim(A_{ab})\\
x+[Prim(A),Prim(A)]&\longrightarrow& x+I_A.
\end{array}\right.$$
The kernel of $\pi_A$ is $\frac{I_A \cap Prim(A)}{[Prim(A),Prim(A)]}$.\\

Let $coPrim(A)_{ab}=\{\overline{x} \in coPrim(A)\mid \delta(\overline{x})=0\}$. As $C_A$ is cocommutative, the Lie coalgebra of $C_A$ is trivial;
hence, there is a natural map:
$$\iota_A:\left\{\begin{array}{rcl}
coPrim(C_A)&\longrightarrow&coPrim(A)_{ab}\\
x+C_A^{+2}&\longrightarrow&x+A^{+2}.
\end{array}\right.$$
This map is well-defined, as $C_A^{+2} \subseteq A^{+2}$. Its kernel is $\frac{C_A \cap A^{+2}}{C_A^{+2}}$.\\

Let $A^*$ be the graded dual of $A$. Using the duality between $A$ and $A^*$, it is easy to prove:
\begin{itemize}
\item $I_A^\perp=C_{A^*}$ so $A_{ab}=C_{A^*}$ and $C_A^*=(A^*)_{ab}$.
\item $Prim(A)^\perp=(1)+{A^*}^{+2}$, so $Prim(A)^*=coPrim(A^*)$.
As a consequence of these two points, $coPrim(C_A)^*=Prim(C_A^*)=Prim((A^*)_{ab})$.
\item $[Prim(A),Prim(A)]^\perp=coPrim(A^*)_{ab}$, so $(coPrim(A)_{ab})^*=Prim(A^*)_{ab}$.
\end{itemize}
Via these identifications, $\pi_A^*=\iota_{A^*}$ and $\iota_A^*=\pi_{A^*}$. So:

\begin{lemma} \label{8}
\begin{enumerate}
\item Let us fix an integer $n$. The following assertions are equivalent:
\begin{enumerate}
\item $\pi_A$ is injective in degree $n$.
\item $\iota_{A^*}$ is surjective in degree $n$.
\item $[Prim(A),Prim(A)]_n=(Prim(A) \cap I_A)_n$.
\end{enumerate}
\item Let us fix an integer $n$. The following assertions are equivalent:
\begin{enumerate}
\item $\iota_A$ is injective in degree $n$.
\item $\pi_{A^*}$ is surjective in degree $n$.
\item $(C_A\cap A^{+2})_n=(C_A^{+2})_n$.
\end{enumerate}
\end{enumerate}
\end{lemma}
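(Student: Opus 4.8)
The plan is to deduce the whole lemma from two ingredients already recorded just above the statement: the explicit descriptions of the kernels of $\pi_A$ and $\iota_A$, and the duality identities $\pi_A^*=\iota_{A^*}$ and $\iota_A^*=\pi_{A^*}$. I would prove the first point (the one concerning $\pi_A$) in detail, and then observe that the second point follows by the very same argument with the roles of $\pi$ and $\iota$, and of $A$ and $A^*$, interchanged.

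For the equivalence $(a)\Longleftrightarrow(c)$ of the first point, the key observation is that $[Prim(A),Prim(A)]\subseteq Prim(A)\cap I_A$ always holds: the Lie bracket on $Prim(A)$ is the commutator, so these brackets lie both in the Lie subalgebra $Prim(A)$ and in the ideal $I_A$ generated by all commutators of $A$. Consequently the kernel $\frac{I_A\cap Prim(A)}{[Prim(A),Prim(A)]}$ of $\pi_A$ is zero in degree $n$ exactly when $[Prim(A),Prim(A)]_n=(Prim(A)\cap I_A)_n$; this is precisely $(a)\Longleftrightarrow(c)$, with no computation required.

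For $(a)\Longleftrightarrow(b)$, I would invoke that each homogeneous component of $A$, hence of every subquotient occurring here, is finite-dimensional, so that a homogeneous linear map is injective in degree $n$ if and only if its transpose is surjective in degree $n$. Applying this to $\pi_A$ and using the identity $\pi_A^*=\iota_{A^*}$ gives $(a)\Longleftrightarrow(b)$ at once. The second point of the lemma is then handled identically: the kernel of $\iota_A$ is $\frac{C_A\cap A^{+2}}{C_A^{+2}}$ with $C_A^{+2}\subseteq C_A\cap A^{+2}$, yielding $(a)\Longleftrightarrow(c)$, and $\iota_A$ is injective in degree $n$ if and only if $\iota_A^*=\pi_{A^*}$ is surjective in degree $n$, yielding $(a)\Longleftrightarrow(b)$.

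All the genuine content has already been established in the preliminary discussion — the four orthogonality computations and the resulting transpose identities $\pi_A^*=\iota_{A^*}$ and $\iota_A^*=\pi_{A^*}$; granting these, the lemma is purely formal. The only step meriting a line of care is the equivalence ``a homogeneous map is injective in degree $n$ if and only if its transpose is surjective in degree $n$'', which rests on the finite-dimensionality of the relevant graded components — automatic for the graded connected Hopf algebras of finite type used throughout — so I do not anticipate a real obstacle.
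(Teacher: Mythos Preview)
Your proposal is correct and follows exactly the paper's approach: the paper's ``proof'' consists solely of the word ``So:'' after recording the kernels of $\pi_A$ and $\iota_A$ and the transpose identities $\pi_A^*=\iota_{A^*}$, $\iota_A^*=\pi_{A^*}$, and you have simply spelled out what that word encodes. Your added remark about finite-dimensionality of graded components is a reasonable point of care that the paper leaves implicit.
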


\begin{lemma}
($char(K)=0$). If $A$ is cocommutative or commutative, then $\pi_A$ and $\iota_A$ are isomorphisms.
\end{lemma}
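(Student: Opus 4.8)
The plan is to split the four implications packaged in the statement — namely ($A$ cocommutative $\Rightarrow$ $\pi_A$, $\iota_A$ isomorphisms) and ($A$ commutative $\Rightarrow$ $\pi_A$, $\iota_A$ isomorphisms) — into two that are immediate, and two that are dual to each other, so that only one non-formal fact remains. For the trivial halves: if $A$ is cocommutative, then $A$ is itself a cocommutative subcoalgebra of $A$, so $C_A=A$; moreover $\tdelta=\tdelta^{op}$, hence the Lie cobracket $\delta$ vanishes identically on $coPrim(A)$, so $coPrim(A)_{ab}=coPrim(A)=coPrim(C_A)$ and $\iota_A$ is the identity. Dually, if $A$ is commutative then every commutator of $A$ vanishes, so $I_A=(0)$, whence $A_{ab}=A$ and $Prim(A_{ab})=\g$; and since $\g\subseteq A$ with $A$ commutative, $[\g,\g]=(0)$, so $Prim(A)_{ab}=\g$ and $\pi_A$ is the identity. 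Thus in each case one of the two maps is an isomorphism for free.

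For the remaining two implications I would use the identities $\pi_A^*=\iota_{A^*}$ and $\iota_A^*=\pi_{A^*}$ recorded just before Lemma~\ref{8}, together with the facts that the graded dual of a connected graded cocommutative Hopf algebra is commutative and conversely, and that a homogeneous linear map between graded spaces with finite-dimensional homogeneous components is an isomorphism if and only if its graded transpose is. These show that the statement ``$A$ cocommutative $\Rightarrow$ $\pi_A$ is an isomorphism'' and the statement ``$A$ commutative $\Rightarrow$ $\iota_A$ is an isomorphism'' are equivalent, so it is enough to prove the first.

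So assume $A$ is cocommutative. Since $char(K)=0$, the Cartier--Milnor--Moore theorem (\cite{Milnor}) gives an isomorphism of graded Hopf algebras $U(\g)\cong A$, where $\g=Prim(A)$. The ideal $I_A$ generated by the commutators of $A$ is a graded Hopf ideal, so $A_{ab}=U(\g)/I_A$ is the largest commutative quotient Hopf algebra of $U(\g)$; I claim $A_{ab}\cong U(\g_{ab})=S(\g_{ab})$, where $\g_{ab}=\g/[\g,\g]$. Indeed, the Lie surjection $\g\to\g_{ab}$ induces a Hopf surjection $U(\g)\to U(\g_{ab})$; as the target is commutative it annihilates every commutator, hence factors as a Hopf surjection $f:A_{ab}\to S(\g_{ab})$ sending $x+I_A$ to $\overline{x}$ for $x\in\g$. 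Conversely, the composite $\g\hookrightarrow U(\g)\to A_{ab}$ takes values in $Prim(A_{ab})$, which is abelian because $A_{ab}$ is commutative, hence factors through $\g_{ab}$; by the universal property of the enveloping algebra it extends to an algebra morphism $g:S(\g_{ab})\to A_{ab}$, $\overline{x}\mapsto x+I_A$, which is a Hopf morphism since it maps $\g_{ab}$ into $Prim(A_{ab})$. Comparing $f$ and $g$ on generators shows they are mutually inverse. Finally, since $char(K)=0$ one has $Prim(S(\g_{ab}))=\g_{ab}$, and transporting this along $g$ identifies $Prim(A_{ab})$ with $\g_{ab}$ precisely via $\overline{x}\mapsto x+I_A$, which is exactly the map $\pi_A$. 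Hence $\pi_A$ is an isomorphism, and the lemma follows.

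I expect the delicate point to be the identification $A_{ab}\cong S(\g_{ab})$: one must be careful that quotienting $U(\g)$ by \emph{all} of its commutators, and not merely by the commutators of elements of $\g$, still yields $S(\g_{ab})$, and one must keep track of the fact that the induced isomorphism on primitive elements is literally $\pi_A$ rather than some abstract isomorphism of graded spaces. A secondary point is to ensure that the form of Cartier--Milnor--Moore invoked is the one valid for ordinary ($\mathbb{N}$-graded, non-super) connected Hopf algebras over a field of characteristic zero, and that all the constructions respect the grading and the finiteness of homogeneous components used in the duality reduction.
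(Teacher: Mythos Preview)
Your proof is correct and follows essentially the same approach as the paper: the cocommutative case is handled directly via Cartier--Milnor--Moore to identify $A_{ab}$ with $S(\g_{ab})$ (you give more detail here than the paper does), and the commutative case is reduced to the cocommutative one by graded duality. The only organizational difference is that you observe $\pi_A$ is literally the identity when $A$ is commutative and therefore need duality only for $\iota_A$, whereas the paper simply dualizes both maps at once; this is a minor shortcut, not a different argument.
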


\begin{proof} Let us assume that $A$ is cocommutative. As the characteristic of the base field is zero, by the Cartier-Quillen-Milnor-Moore theorem,
$A$ is the enveloping algebra of $Prim(A)$. Using the universal property of enveloping algebras, 
it is not difficult to prove that $A_{ab}=\U(Prim(A)_{ab})=S(Prim(A)_{ab}))$. As $Prim(\U(Prim(A_{ab})))=Prim(A_{ab})$, $\pi_A$ is clearly bijective.
As $A$ is cocommutative, $C_A=A$ and $coPrim(A)_{ab}=coPrim(A)$. So $\iota_A$ is the identity of $coPrim(A)$.\\

Let us now assume that $A$ is commutative. Then $A^*$ is cocommutative, so $\iota_{A^*}$ and $\pi_{A^*}$ are bijective. Hence, their transposes
$\pi_A$ and $\iota_A$ are bijective. \end{proof}

\begin{prop} \label{p10}
($char(K)=0$). Let $A$ be any graded, connected Hopf algebra. Then $Prim(A)\cap A^{+2}=Prim(A)\cap I_A$ and $C_A=\U(Prim(A))$. 
Moreover, the following assertions are equivalent:
\begin{enumerate}
\item $\pi_A$ is injective in degree $n$.
\item $\iota_{A^*}$ is surjective in degree $n$.
\item $\iota_A$ is injective in degree $n$.
\item $\pi_{A^*}$ is surjective in degree $n$.
\item $(Prim(A)\cap A^{+2})_n=[Prim(A),Prim(A)]_n$.
\item $(C_A \cap A^{+2})_n=(C_A^{+2})_n$.
\end{enumerate}
\end{prop}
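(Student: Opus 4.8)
The plan is to prove the two unconditional assertions ($Prim(A)\cap A^{+2}=Prim(A)\cap I_A$ and $C_A=\U(Prim(A))$) first, and then read off the list of equivalent conditions from Lemma~\ref{8} together with the descriptions of $\mathrm{Ker}\,\pi_A$ and $\mathrm{Ker}\,\iota_A$ recalled just before that lemma. \emph{Step 1 ($C_A=\U(Prim(A))$):} since $\mathrm{char}(K)=0$ and $C_A$ is a cocommutative, connected, graded Hopf algebra, the Cartier--Quillen--Milnor--Moore theorem gives $C_A=\U(Prim(C_A))$; on the other hand, for any $x\in Prim(A)$ the subspace $K1\oplus Kx$ is a cocommutative subcoalgebra, hence lies in $C_A$, so $Prim(A)\subseteq C_A$, and since $C_A$ is a Hopf subalgebra this forces $Prim(C_A)=Prim(A)$. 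Two consequences will be used below: $C_A^+=Prim(A)+C_A^{+2}$ (as $C_A$ is generated as an algebra by $Prim(A)$), and, by the Poincar\'e--Birkhoff--Witt theorem, $Prim(A)\cap C_A^{+2}=[Prim(A),Prim(A)]$.

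\emph{Step 2 ($Prim(A)\cap A^{+2}=Prim(A)\cap I_A$):} since $A^{+2}$ is a two-sided ideal of $A$ containing every commutator $xy-yx$, it contains $I_A$, which gives the inclusion $Prim(A)\cap I_A\subseteq Prim(A)\cap A^{+2}$. For the converse, take $x\in Prim(A)\cap A^{+2}$ and let $\bar x$ be its image in the commutative Hopf algebra $A_{ab}=A/I_A$; as $A\to A_{ab}$ is a morphism of Hopf algebras, $\bar x\in Prim(A_{ab})\cap A_{ab}^{+2}$, so it is enough to show $Prim(A_{ab})\cap A_{ab}^{+2}=(0)$. By the previous lemma applied to the commutative Hopf algebra $A_{ab}$, the map $\iota_{A_{ab}}$ is an isomorphism, so its kernel $(C_{A_{ab}}\cap A_{ab}^{+2})/C_{A_{ab}}^{+2}$ vanishes, i.e.\ $C_{A_{ab}}\cap A_{ab}^{+2}=C_{A_{ab}}^{+2}$. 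Since $Prim(A_{ab})\subseteq C_{A_{ab}}$ by Step~1, intersecting with $Prim(A_{ab})$ and using $Prim(A_{ab})\cap C_{A_{ab}}^{+2}=[Prim(A_{ab}),Prim(A_{ab})]$ yields $Prim(A_{ab})\cap A_{ab}^{+2}=[Prim(A_{ab}),Prim(A_{ab})]=(0)$, the last equality because $A_{ab}$ is commutative. Hence $\bar x=0$, i.e.\ $x\in I_A$.

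\emph{Step 3 (the six equivalences):} recall $\mathrm{Ker}\,\pi_A=(I_A\cap Prim(A))/[Prim(A),Prim(A)]$, which by Step~2 equals $(A^{+2}\cap Prim(A))/[Prim(A),Prim(A)]$, so condition (1) is equivalent to condition (5); likewise $\mathrm{Ker}\,\iota_A=(C_A\cap A^{+2})/C_A^{+2}$, so condition (3) is equivalent to condition (6). Next I check (5)$\Leftrightarrow$(6): if (6) holds, intersecting with $Prim(A)\subseteq C_A$ and using $Prim(A)\cap C_A^{+2}=[Prim(A),Prim(A)]$ gives (5); conversely, assuming (5) and given $c\in(C_A\cap A^{+2})_n$, write $c=p+d$ with $p\in Prim(A)_n$ and $d\in(C_A^{+2})_n$ (possible since $C_A^+=Prim(A)+C_A^{+2}$), so that $p=c-d\in(Prim(A)\cap A^{+2})_n=[Prim(A),Prim(A)]_n\subseteq C_A^{+2}$, whence $c\in C_A^{+2}$, which is (6). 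Finally Lemma~\ref{8} gives (1)$\Leftrightarrow$(2) and (3)$\Leftrightarrow$(4), so all six conditions are equivalent.

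\emph{Main obstacle.} The only genuinely nontrivial input is the vanishing $Prim(A_{ab})\cap A_{ab}^{+2}=(0)$ for the commutative Hopf algebra $A_{ab}$ in Step~2; I expect this to be the crux, and the argument above routes it through the previous lemma and Step~1. (Equivalently, one could invoke the classical fact that a connected graded commutative Hopf algebra over a field of characteristic zero has no nonzero decomposable primitive element, which follows by duality from the structure theorem for its cocommutative graded dual.) Everything else is bookkeeping with the kernels of $\pi_A$ and $\iota_A$ and with the identities of Step~1.
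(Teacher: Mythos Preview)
Your proof is correct and follows essentially the same route as the paper's: reduce $Prim(A)\cap A^{+2}\subseteq I_A$ to the commutative quotient $A_{ab}$, use the previous lemma to get $C_{A_{ab}}\cap A_{ab}^{+2}=C_{A_{ab}}^{+2}$, and conclude via Cartier--Quillen--Milnor--Moore applied to $C_{A_{ab}}$; the identification $C_A=\U(Prim(A))$ and the bookkeeping with $\ker\pi_A$, $\ker\iota_A$ are the same as well. The only cosmetic differences are that you prove $C_A=\U(Prim(A))$ first and invoke PBW directly for $Prim(A)\cap C_A^{+2}=[Prim(A),Prim(A)]$, and you bridge the two blocks of equivalences via (5)$\Leftrightarrow$(6) rather than the paper's computation of $\ker\iota_A$ giving (3)$\Leftrightarrow$(5).
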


\begin{proof} As $I_A \cap A^{+2}$, $Prim(A)\cap I_A \subseteq Prim(A)\cap A^{+2}$. Let $x \in Prim(A)\cap A^{+2}$.
Then $x+I_A \in Prim(A_{ab})\cap A_{ab}^{+2}$. As $A_{ab}$ is commutative, $\iota_{A_{ab}}$ is a bijection, so
$C_{A_{ab}} \cap A_{ab}^{+2}=C_{A_{ab}}^{+2}$. As $x+I_A$ is primitive, it belongs to $C_{A_{ab}}$, so $x+I_A \in C_{A_{ab}}^{+2} \cap Prim(A_{ab})$.
By the Cartier-Quillen-Milnor-Moore theorem, $C_{A_{ab}}$ is a  symmetric Hopf algebra, so $C_{A_{ab}}^{+2} \cap Prim(A_{ab})=(0)$.
So $x\in Prim(A)\cap I_A$. 

As $K+Prim(A)$ is a cocommutative subcoalgebra of $A$, it is included in $C_A$. So $Prim(C_A)=Prim(A)$.
By the Cartier-Quillen-Milnor-Moore theorem, $C_A=\U(Prim(A))$.\\

By lemma \ref{8}, 1, 2 and 5 are equivalent, and 3, 4 and 6 are equivalent.
As $C_A=\U(Prim(A))$, $C_A^+=Prim(A)+C_A^{+2}$, so $coPrim(C_A)=\frac{Prim(A)}{Prim(A)\cap C_A^{+2}}$. 
As a consequence, the kernel of $\iota_A$ is $\frac{Prim(A) \cap A^{+2}}{Prim(A) \cap C_A^{+2}}$.
Moreover, as $C_A$ is cocommutative, $\pi_A$ is bijective, so $Prim(A) \cap C_A^{+2}=Prim(A) \cap I_{C_A}=[Prim(A),Prim(A)]$.
Finally, the kernel of $\iota_A$ is $\frac{Prim(A)\cap A^{+2}}{[Prim(A),Prim(A)]}$. So 3 and 5 are equivalent. \end{proof}\\

{\bf Remark.} If the characteristic of the base field is a prime integer $p$, it may be false that $C_A=\U(Prim(A))$.
The weaker assertion telling that $C_A$ is generated by $Prim(A)$, so is a quotient of $\U(Prim(A))$, may also be false.
For example, let us consider the Hopf algebra of divided powers $A$, that is to say the graded dual of $K[X]$.
This Hopf algebra as a basis $(x^{(n)})_{n \geq 0}$,  the product and coproduct are given by:
$$x^{(i)}x^{(j)}=\binom{i+j}{i}x^{(i+j)}, \: \Delta(x^{(n)})=\sum_{i+j=n} x^{(i)}\otimes x^{(j)}.$$
As $A$ is cocommutative, $C_A=A$. It is not difficult to see that $Prim(A)=Vect\left(x^{(1)}\right)$, and the subalgebra generated by $Prim(A)$
is $Vect(x^{(i)}, 0\leq i <p)$, so is not equal to $C_A$.

\subsection{Primitive elements of a free and cofree Hopf algebra}

\begin{prop} \label{11}
($char(K)=0$). Let $H$ be a graded, connected, free and cofree Hopf algebra. Then $\iota_H$ and $\pi_H$ are isomorphisms.
\end{prop}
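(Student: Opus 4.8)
The plan is to reduce the statement, via Proposition~\ref{p10} together with the self-duality of~$H$, to the single assertion $(\g\cap H^{+2})_n=[\g,\g]_n$ for all~$n$, and then to prove that assertion by induction on~$n$, using the free and the cofree structures simultaneously.

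For the reduction: since $H$ is free and cofree, so is $H^*$, and by Corollary~\ref{5} there is an isomorphism of graded Hopf algebras $H\approx H^*$, which identifies $\iota_H$ with $\iota_{H^*}$ and $\pi_H$ with $\pi_{H^*}$. Fixing~$n$, Proposition~\ref{p10} applied to~$H$ gives the equivalence, in degree~$n$, of ``$\pi_H$ injective'', ``$\iota_{H^*}$ surjective'', ``$\iota_H$ injective'' and ``$\pi_{H^*}$ surjective''; together with $H\approx H^*$ this makes the four properties ``$\iota_H$ injective'', ``$\iota_H$ surjective'', ``$\pi_H$ injective'', ``$\pi_H$ surjective'' equivalent in degree~$n$. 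Hence it suffices to prove that $\pi_H$ is injective in every degree, i.e., by condition~5 of Proposition~\ref{p10}, that $\g\cap H^{+2}=[\g,\g]$. The inclusion $[\g,\g]\subseteq\g\cap H^{+2}$ is clear, and by Proposition~\ref{p10} one has $\g\cap H^{+2}=\g\cap I_H$; so it remains to show that every primitive element of~$H$ lying in the commutator ideal $I_H$ is a sum of brackets of primitive elements.

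I would prove $\g\cap I_H\subseteq[\g,\g]$ by induction on the degree~$n$ of $z\in(\g\cap I_H)_n$, so that $(\g\cap H^{+2})_k=[\g,\g]_k$ may be assumed for $k<n$. Two filtrations of~$H$ are used. Since $H$ is free, $H=T(V)$ for a graded complement~$V$ of $H^{+2}$ in~$H^+$ with $\dim V_k=\dim\g_k$ (Lemma~\ref{lemmeplus}), and $H^{+2}=(H^+)^2$, so~$z$ may be written $\sum_i a_ib_i$ with $a_i,b_i$ homogeneous of degree~$<n$; since $H$ is cofree, the coradical filtration $(F_k)$ has associated graded $coT(\g)$, so $F_1=K\oplus\g$, $F_k/F_{k-1}\cong\g^{\otimes k}$, and—by a dimension count—$\tdelta$ maps $F_2\cap H^+$ onto $\g\otimes\g$ with kernel~$\g$. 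Expanding the relation $\tdelta(z)=0$ and examining the lowest co-length component, one subtracts products $c_\alpha d_\alpha$ of primitive elements $c_\alpha,d_\alpha\in\g$ of degree~$<n$ to absorb the symmetric part (this is possible precisely because $\tdelta$ is onto $\g\otimes\g$ on $F_2\cap H^+$), normalizing~$z$ so that $\tdelta(z)\in(\g\cap H^{+2})\otimes\g+\g\otimes(\g\cap H^{+2})$; by the induction hypothesis this is $[\g,\g]\otimes\g+\g\otimes[\g,\g]$, which (up to a symmetric term) is $\tdelta$ of an element of $[\g,\g]\cdot\g\subseteq H^{+2}$, so subtracting that element yields a primitive representative and, tracking the modifications, $z\in[\g,\g]$. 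The main obstacle is exactly this last step: propagating the normalization consistently through all the co-length levels, i.e., showing that the decomposable (``$H^{+2}$'') corrections furnished by the free structure, combined with the co-length-lowering furnished by the cofree structure, kill the obstruction at every stage so that the induction closes; this is where both hypotheses, and the balance $\dim\g_n=\dim(H^+/H^{+2})_n$ of Lemma~\ref{lemmeplus}, are genuinely needed, whereas the passages $\g\cap H^{+2}=\g\cap I_H$ and the reduction to one condition are purely formal.
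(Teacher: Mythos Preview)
Your reduction is correct: by Proposition~\ref{p10} and the self-duality of Corollary~\ref{5}, one need only verify any one of the six equivalent conditions in every degree. But the inductive step has a real gap, which you yourself flag. The sketch is incoherent as written: you start with $z$ primitive, so $\tdelta(z)=0$ already; subtracting products $c_\alpha d_\alpha$ of primitives makes the result \emph{non}-primitive; and ``normalizing $z$ so that $\tdelta(z)\in(\g\cap H^{+2})\otimes\g+\cdots$'' is neither what that subtraction produces nor visibly a step towards $z\in[\g,\g]$. The deeper problem is that condition~5 hands you only the vacuous constraint $\tdelta(z)=0$, so there is nothing on which to hang a descent; your invocation of the coradical filtration does not help, since a primitive $z$ sits at the very bottom of it.

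The paper sidesteps this by proving condition~6 instead, $(C_H\cap H^{+2})_n=(C_H^{+2})_n$: for a merely \emph{cocommutative} $x$ the identity $\Delta(x)=\Delta^{op}(x)$ is a nontrivial constraint one can exploit. Writing $H=T(V)$ as an algebra, one grades each word $v_1\cdots v_k$ by its degree-sequence $(\deg v_1,\ldots,\deg v_k)$ and totally orders these sequences first by length, then lexicographically. Applying $(\pi_k\otimes\pi_1)(\Delta-\Delta^{op})=0$ and reading off minimal coordinates one at a time forces the minimal-sequence component of $x$ to lie in $(\ker\delta)^{\otimes k}\subseteq V^{\otimes k}$; the outer induction on $n$ (bijectivity of $\iota_H$ in the smaller degrees $a_i<n$) then lifts each tensor factor to a primitive element of $H$, and subtracting their product---an element of $C_H^{+2}$---strictly raises the minimal sequence, closing an inner decreasing induction. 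This ordered two-level induction, driven by cocommutativity rather than primitivity, is precisely the device your sketch is missing.
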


\begin{proof}  Let us prove by induction on $n$ that $(C_H\cap H^{+2})_n=(C_H^{+2})_n$.
There is nothing to prove for $n=0$. Let us assume the result at all rank $k<n$.

As $H$ is free, let us choose a graded subspace $V$ of $H$, such that $H=T(V)$ as an algebra. 
Then $H^+=V\oplus H^{+2}$, so $coPrim(H)$ is identified with $V$. 
For all $k \in \mathbb{N}$, we denote by $\pi_k$ the projection on $V^k$ in $H=T(V)$. Then the Lie cobracket of $V$ is given by
$\delta(v)=(\pi_1\otimes \pi_1)\circ(\Delta-\Delta^{op})(v)$.

For any word $w=v_1\ldots v_k$ in homogeneous elements of $V$, we put $d(w)=(deg(v_1),\ldots, deg(v_k))$. 
We obtain in this way a gradation of $H$, indexed by words in nonnegative integers. These words are totally ordered in the following way:
if $w=(a_1,\ldots,a_m)$ and $w'=(a'_1,\ldots,a'_n)$ are two different words, then $w<w'$ if, and only if, ($m<n$), or
($m=n$ and there exists an index $i$, such that $a_1=a'_1,\ldots,a_i=a'_i$, $a_{i+1}<a'_{i+1}$).

Let $x\in (C_H\cap H^{+2})_n$. We can write:
$$x=\sum_{k=2}^n \underbrace{\sum_{\substack{(a_1,\ldots,a_k)\in \mathbb{N}^k\\a_1+\ldots+a_k=n}} x_{a_1,\ldots,a_k}}_{x_k},$$
where $x_{a_1,\ldots,a_k}$ is a linear span of words $w$ such that $d(w)=(a_1,\ldots,a_k)$.
Let us put $I_n$ the set of words $(a_1,\ldots,a_k)$ such that $a_1+\ldots+a_k=n$. This set is finite and totally ordered.
Its greatest element is $(1,\ldots,1)$. Let us proceed by a decreasing induction on the smallest $w=(a_1,\ldots,a_n) \in I_n$ such that $x_w \neq 0$.
If $w=(1,\ldots,1)$, then $x$ is in the subalgebra generated by $H_1$. As $H$ is connected, $H_1 \subseteq C_H$, so $x\in C_H^{+2}$.
Let us assume the result for all $w'>w=(a_1,\ldots,a_k)$ in $I_n$. We first prove that $x_{a_1,\ldots,a_k} \in V_{ab}^k$. We put:
$$x_k=\sum_i v_1^{(i)} \ldots v_k^{(i)}.$$
By minimality of $(a_1,\ldots,a_k)$, we obtain:
$$(\pi_k \otimes \pi_1)\circ (\Delta-\Delta^{op})(x)=\sum_i \sum_{j=1}^k v_1^{(i)} \ldots \left(v_j^{(i)}\right)' \ldots v_k^{(j)} \otimes \left(v_j^{(i)}\right)'',$$
with the notation $\delta(v)=v'\otimes v''$ for all $v\in V$. As $x\in C_H$, $\Delta(x)=\Delta^{op}(x)$, so this is zero. As $H$ is freely generated by $V$, we deduce:
$$\sum_i \sum_{j=1}^k v_1^{(i)} \otimes \ldots \otimes \left(v_j^{(i)}\right)'\otimes \ldots \otimes v_k^{(j)} \otimes \left(v_j^{(i)}\right)''=0.$$
Considering the terms of this sum of the form $v_1\otimes \ldots \otimes v_{k+1}$, with $deg(v_1)+deg(v_{k+1})$ minimal, we obtain that:
$$\sum_{\substack{(b_1,\ldots,b_k)\in I_n\\b_1=a_1}} x_{b_1,\ldots,b_n} \in V_{ab}V^{k-1}.$$
Considering the terms of the form $v_1\otimes \ldots \otimes v_{k+1}$, with $deg(v_1)=a_1$ and $deg(v_2)+deg(v_{k+1})$ minimal, we obtain that:
$$\sum_{\substack{(b_1,\ldots,b_k)\in I_n\\b_1=a_1,\:b_2=a_2}} x_{b_1,\ldots,b_n} \in V_{ab}^2V^{k-2}.$$
Iterating the process, we finally obtain that $x_{a_1,\ldots,a_k} \in V_{ab}^k$.\\

We now put:
$$x_{a_1,\ldots,a_k}=\sum_i w_1^{(i)} \ldots w_k^{(i)},$$
where the $w_j^{(i)}$ belong to $(V_{ab})_{a_i}$. As $k\geq 2$, $a_1,\ldots,a_k<n$. By the induction hypothesis, $\iota_H$ is bijective in degree $a_i$ for all $i$.
So there exists $x_j^{(i)}$ in $Prim(H)_{a_j}$, such that $x_j^{(i)}-w_j^{(i)} \in H^{+2}$. 
 We consider the following element:
$$y=x-\underbrace{\sum_i x_1^{(i)} \ldots x_k^{(i)}}_{in (C_H^{+2})_n}.$$
As the $x_j^{(i)}$ are primitive, $y\in (C_H\cap H^{+2})_n$. Moreover, $y_{a_1,\ldots,a_k}=0$ by definition of the $x_j^{(i)}$.
If $y_{b_1,\ldots,b_l} \neq 0$, then $x_{b_1,\ldots,b_l}\neq 0$ or $l>k$. By definition of the order on the words, the smallest $(b_1,\ldots,b_l)$
such that $y_{b_1,\ldots,b_l}\neq 0$ is strictly greater that $(a_1,\ldots,a_k)$: as a consequence, $y\in (C_H^{+2})_n$. So $x \in (C_H^{+2})_n$.\\

{\it Conclusion.} So assertion 6 of proposition \ref{p10} is satisfied for all $n$.
Hence, for all $n$, $\iota_H$ is injective in degree $n$ and $\iota_{H^*}$ is surjective in degree $n$.
By corollary \ref{5}, $H^*$ is isomorphic to $H$. So $\iota_H$ is surjective in degree $n$. A similar proof holds for $\pi_H$. \end{proof}

\begin{cor}\label{12}
($char(K)=0$). Let $H$  be a graded, connected, free and cofree Hopf algebra. Then $[\g,\g]=\g\cap H^{+2}$. 
Let $\h$ be a graded subspace of $H$ such that $\g=[\g,\g]\oplus \h$. 
\begin{enumerate}
\item Then $\h$ freely generates the Lie algebra $\g$. The subalgebra generated by $\h$ is a free Hopf subalgebra of $H$, isomorphic to $\U(\g)$.
\item The graded Hopf algebra $H_{ab}$ is isomorphic to the shuffle algebra $coT(\h)$.
\end{enumerate}\end{cor}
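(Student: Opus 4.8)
The starting point is Proposition \ref{11}, which gives $\iota_H$ and $\pi_H$ are isomorphisms; the first equality $[\g,\g]=\g\cap H^{+2}$ is then exactly assertion 5 of Proposition \ref{p10}, valid in every degree. So from the outset we know $\g\cap H^{+2}=[\g,\g]$ and $C_H=\U(\g)$.

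For part 1, fix $\h$ with $\g=[\g,\g]\oplus\h$. I would argue as follows. Since $\g\cap H^{+2}=[\g,\g]$, the space $\h$ maps isomorphically onto $\frac{\g}{\g\cap H^{+2}}=\operatorname{coPrim}(C_H)$, whose Poincar\'e-Hilbert series is, by Lemma \ref{lemmeplus}, the series $G(h)=P(h)=1-1/R(h)$ of generators of $H$ as a free algebra. Now consider the Lie subalgebra $\langle\h\rangle$ of $\g$ generated by $\h$; it maps onto $\g/[\g,\g]=\g/(\g\cap H^{+2})$, hence onto all of $\g$, since a Lie algebra is generated by any complement of its derived subalgebra. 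To see that $\h$ generates $\g$ \emph{freely}, compare dimensions: the free Lie algebra $L(\h)$ on a graded space with series $G(h)$ has, by the PBW/Witt computation, enveloping algebra $\U(L(\h))$ with series $\prod(1-h^k)^{-\dim\h_k}$; taking the logarithm and using $G(h)=1-1/R(h)$, one checks that this series equals $R(h)$, the series of $H$ and hence of $C_H=\U(\g)$. Since the surjective graded Lie morphism $L(\h)\twoheadrightarrow\g$ then induces a surjective graded algebra morphism $\U(L(\h))\twoheadrightarrow\U(\g)=C_H$ between spaces with the same finite-dimensional graded pieces, it is an isomorphism; hence $L(\h)\cong\g$, i.e. $\h$ freely generates $\g$. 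This simultaneously identifies the subalgebra of $H$ generated by $\h$ with $\U(\g)=C_H$, which is free as an algebra (it is $T(V)$'s cocommutative part, or directly: $\U$ of a free Lie algebra is a free associative algebra on $\h$).

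For part 2, dualize. By Corollary \ref{5}, $H\cong H^*$ as graded Hopf algebras, so $H_{ab}=H^*_{ab}=(C_H)^*$ using the identification $I_A^\perp=C_{A^*}$ recorded before Lemma \ref{8}. We have just shown $C_H=\U(\g)\cong\U(L(\h))$, the free associative algebra $T(\h)$ equipped with its cocommutative (shuffle-dual, i.e. unshuffle) coproduct; its graded dual is precisely the shuffle Hopf algebra $coT(\h^*)$. Since $\h$ is graded with finite-dimensional pieces and isometrically identified with $\operatorname{coPrim}(C_H)$, one gets $H_{ab}\cong coT(\h)$ as graded Hopf algebras (using $\h^*\cong\h$ via any graded pairing, compatibly with the chosen identifications).

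I expect the main obstacle to be the dimension bookkeeping in part 1 --- namely, verifying cleanly that the Witt-formula series $\prod_k(1-h^k)^{-\dim\h_k}$ coincides with $R(h)$, starting only from the relations $R=1/(1-P)$, $P=G$, and $\dim\h_k=\dim\left(\frac{\g}{\g\cap H^{+2}}\right)_k=G_k$. This is the step where freeness of $\g$ is actually extracted, and it must be done by a generating-function identity rather than a direct basis argument; once it is in place, the surjection-between-equal-dimensions trick closes everything, and part 2 is formal.
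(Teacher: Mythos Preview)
Your argument for part 1 has a genuine gap, and it lies exactly where you anticipated trouble. You claim that Lemma \ref{lemmeplus} gives the Poincar\'e--Hilbert series of $\frac{\g}{\g\cap H^{+2}}$ as $G(h)=P(h)$; it does not. That lemma says $\dim(\g_n)=\dim\left(\left(\frac{H^+}{H^{+2}}\right)_n\right)$, i.e.\ the series of $\g$ itself is $G(h)=P(h)$. The series of $\h\cong\frac{\g}{\g\cap H^{+2}}$ is the strictly smaller $S(h)$. Consequently $\U(L(\h))=T(\h)$ has series $\frac{1}{1-S(h)}$, not $R(h)$; and $C_H=\U(\g)$, being a proper cocommutative Hopf subalgebra of $H$, also does not have series $R(h)$ (by PBW it has series $\prod_k(1-h^k)^{-p_k}$). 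So the ``surjection between spaces with equal graded pieces'' step collapses: the identity you would actually need is $\frac{1}{1-S(h)}=\prod_k(1-h^k)^{-p_k}$, and that is precisely the relation of Proposition \ref{13}(2), which in the paper is \emph{derived from} Corollary \ref{12}. Your dimension count is therefore circular.

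The paper avoids series entirely by using freeness of $H$ directly. Since $[\g,\g]=\g\cap H^{+2}$, one has $\h\cap H^{+2}=\h\cap[\g,\g]=(0)$; because $H$ is a free algebra, any graded subspace meeting $H^{+2}$ trivially generates a free subalgebra, so $K\langle\h\rangle$ is free on $\h$. It is cocommutative (as $\h\subseteq\g$), hence by Cartier--Quillen--Milnor--Moore it equals $\U(\mathrm{Prim}(K\langle\h\rangle))=\U(\g)$, since $\h$ already generates $\g$ as a Lie algebra. Thus $\U(\g)\cong T(\h)$, and freeness of $\g$ on $\h$ follows. Your treatment of part 2 is essentially the paper's: dualize using self-duality and $(C_H)^*\cong(H^*)_{ab}\cong H_{ab}$, then identify $T(\h)^*$ with $coT(\h)$.
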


\begin{proof} 1. By proposition \ref{p10}, as $\pi_H$ is injective by proposition \ref{11}, $[\g,\g]=\g \cap H^{+2}$. 
As $\g=[\g,\g]+\h$, $\h$ generates the Lie algebra $\g$. Moreover, $\h\cap H^{+2}\subseteq \h \cap \g\cap H^{+2}=\h \cap [\g,\g]=(0)$,
so $\h$ generates a free algebra $K\langle \h \rangle$. As $\h\subseteq \g$, $K\langle \h \rangle$ is clearly a cocommutative Hopf subalgebra of $H$.
As $\h$ generates the Lie algebra $\g$, by the Cartier-Quillen-Milnor-Moore theorem, $K\langle \h \rangle$ is isomorphic to $\U(\g)$.
So the algebra $\U(\g)$ is freely generated by $\h$, which implies that the Lie algebra $\g$ is freely generated by $\h$. \\

2. As $H$ is self-dual, the Hopf algebras $\U(\g)^*=(C_H)^*$, $(H^*)_{ab}$ and $H_{ab}$ are isomorphic.
By the first point, $\U(\g)$ is isomorphic to $T(\h)$, so $H_{ab}$ is isomorphic to $T(\h^*)^*\approx coT(\h)$. \end{proof}\\

{\bf Remarks.} \begin{enumerate}
\item Corollary \ref{12} is false in characteristic $p$. Indeed, $\g^p=Vect(x^p\:\mid x\in \g)\subseteq \g \cap H^{+2}$. 
It is also false that $\g \cap H^{+2}=[\g,\g]+\g^p$. For example, let us consider a free and cofree Hopf algebra $H$ over $K$, 
such that $\frac{\g}{\g \cap H^{+2}}$ is one-dimensional, concentrated in degree $1$. So $H_1$ is one-dimensional, generated by an element $x$, 
which is primitive. It is not difficult to show that $H_i=Vect(x^i)$ if $i<p$. As a consequence, $\g_1=Vect(x)$ and $\g_i=(0)$ if $2\leq i\leq p-1$.
As $H$ is cofree, there exists $y\in H_p$, such that :
$$\Delta(y)=y\otimes 1+\sum_{i=1}^{p-1}\frac{x^i}{i!}\otimes \frac{x^{p-i}}{(p-i)!}+1\otimes y.$$
It is then not difficult to show that $(x^p,y)$ is a basis of $H_p$. 
So $[\g,\g]_{p+1}=(0)$ and $(\g^p)_{p+1}=(0)$. But $xy-yx$ is a non-zero element of $(\g \cap H^{+2})_{p+1}$.
\item If $H$ is a non-commutative Connes-Kreimer Hopf algebra, then $Prim(H)$ is a free brace algebra;
conversely, any free brace algebra is isomorphic to the Lie algebra of a non-commutative Connes-Kreimer Hopf algebra
\cite{Chapoton,Foissy2,Ronco}. One then recovers the result of \cite{Foissy1}, telling that  in characteristic zero, a free brace algebra is a free Lie algebra.
\end{enumerate}

\subsection{Poincaré-Hilbert series of $H$}

Let $H$ be a graded, connected, free and cofree Hopf algebra. We put:
$$R(h)=\sum_{n=0}^\infty dim(H_n)h^n,\hspace{.5cm}
P(h)=\sum_{n=1}^\infty dim(\g_n) h^n,\hspace{.5cm}
S(h)=\sum_{n=1}^\infty dim\left(\left(\frac{\g}{\g \cap H^{+2}}\right)_n\right) h^n.$$
The coefficients of $R(h)$, $P(h)$ and $S(h)$ will be respectively denoted by $r_n$, $p_n$ and $s_n$.

\begin{prop} \label{13}
($char(K)=0$). The following relations between $R(h)$, $P(h)$ and $S(h)$ are satisfied:
\begin{enumerate}
\item $\displaystyle R(h)=\frac{1}{1-P(h)}$ and $\displaystyle P(h)=1-\frac{1}{R(h)}$.
\item $\displaystyle 1-S(h)=\prod_{n=1}^\infty(1-h^n)^{p_n}$.
\end{enumerate}
\end{prop}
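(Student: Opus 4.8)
The two identities in Proposition~\ref{13} are really statements about graded dimensions extracted from the structural isomorphisms established earlier, so the plan is to translate each isomorphism into an identity of formal power series and then read off the coefficients. For part~1, I would recall from Lemma~\ref{lemmeplus} (and its proof) that $H$ is free as an algebra on a graded space $V$ with $\dim(V_n)=\dim((H^+/H^{+2})_n)$, and also cofree as a coalgebra, i.e. isomorphic to $coT(\g)$ as a coalgebra. The tensor algebra structure gives $R(h)=\frac{1}{1-G(h)}$ where $G(h)=\sum\dim(V_n)h^n$, while the tensor coalgebra structure gives $R(h)=\frac{1}{1-P(h)}$; comparing, we get $G(h)=P(h)$, which in particular re-proves Lemma~\ref{lemmeplus}, and in any case yields $R(h)=\frac{1}{1-P(h)}$ and the equivalent $P(h)=1-\frac{1}{R(h)}$. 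This part is essentially bookkeeping with the geometric-series expansion of the tensor (co)algebra Hilbert series.

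For part~2, the key input is Corollary~\ref{12}: in characteristic zero the subalgebra $C_H=\U(\g)$ is free (generated by $\h$), hence $\U(\g)\cong T(\h)$ as an algebra, and dually $H_{ab}\cong coT(\h)$, the shuffle algebra on $\h$. From $C_H=\U(\g)$ and the PBW theorem, the Hilbert series of $\U(\g)$ is $\prod_{n=1}^\infty(1-h^n)^{-p_n}$ (this is the standard PBW product formula for the enveloping algebra of a graded Lie algebra with $\dim\g_n=p_n$). On the other hand, since $\U(\g)$ is free as an algebra on $\h$, its Hilbert series is also $\frac{1}{1-T(h)}$ where $T(h)=\sum\dim(\h_n)h^n=S(h)$, because $\h$ is by construction a graded complement of $[\g,\g]=\g\cap H^{+2}$ in $\g$, so $\dim(\h_n)=\dim((\g/(\g\cap H^{+2}))_n)=s_n$. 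Equating the two expressions for the Hilbert series of $\U(\g)$ gives $\frac{1}{1-S(h)}=\prod_{n=1}^\infty(1-h^n)^{-p_n}$, i.e. $1-S(h)=\prod_{n=1}^\infty(1-h^n)^{p_n}$, as claimed.

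The only genuine subtlety, and the step I would be most careful about, is justifying the PBW product formula for a \emph{graded} Lie algebra in this infinite-dimensional setting: one must note that each homogeneous component $\g_n$ is finite-dimensional (which follows since $\g\subseteq H$ and $H$ is of finite type), so the PBW basis of $\U(\g)$ restricted to degree $n$ is indexed by finite nondecreasing sequences of homogeneous basis elements of $\g$ of total degree $n$, and the generating function of such multisets is exactly $\prod_n(1-h^n)^{-p_n}$; the product is a well-defined formal power series because only finitely many factors contribute to each coefficient. Everything else — the identification $\dim(\h_n)=s_n$, the freeness of $\U(\g)$ on $\h$, and the tensor (co)algebra Hilbert series computations — is routine once the cited results (Lemma~\ref{lemmeplus}, Corollary~\ref{12}, Proposition~\ref{11}) are invoked. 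I would write part~1 first, then part~2, flagging the PBW formula explicitly.
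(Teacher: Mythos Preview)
Your proposal is correct and follows essentially the same approach as the paper: part~1 is deduced from the cofreeness of $H$ via Lemma~\ref{lemmeplus}, and part~2 comes from computing the Poincar\'e--Hilbert series of $\U(\g)$ in two ways---as $\frac{1}{1-S(h)}$ by freeness on $\h$ (Corollary~\ref{12}) and as $\prod_n (1-h^n)^{-p_n}$ by PBW. Your added justification for the convergence of the PBW product formula in the graded, finite-type setting is a useful elaboration that the paper leaves implicit.
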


\begin{proof} 1. This comes from the cofreeness of  $H$, see lemma \ref{lemmeplus}. \\

2. We use the notations of corollary \ref{12}. Then $\h$ and $\frac{\g}{\g\cap H^{+2}}$ have the same Poincaré-Hilbert series.
As $\U(\g)$ is freely generated by $\h$, the Poincaré-Hilbert series of $\U(\g)$ is $\frac{1}{1-S(h)}$. By the Poincaré-Birkhoff-Witt theorem,
$\displaystyle \frac{1}{1-S(h)}=\prod_{n=1}^\infty \frac{1}{(1-h^n)^{p_n}}$. \end{proof}\\

The first point of proposition \ref{13} allows to compute $r_n$ in function of $p_1,\ldots,p_n$ and $p_n$ in function of $r_1,\ldots,r_n$;
the second point allows to compute $s_n$ in function of $p_1,\ldots,p_n$ and $p_n$ in function of $s_1,\ldots,s_n$.
For example:
$$\begin{array}{|rclc|crcl|}
\hline r_1&=&p_1 &&& p_1&=&r_1\\
r_2&=&p_2+p_1^2 &&& p_2&=&r_2-r_1^2\\
r_3&=&p_3+p_1^3+2p_2p_1 &&& p_3&=&r_3+r_1^3-2r_2r_1\\
&&&&&&&\\
\hline p_1&=&s_1 &&& s_1&=&p_1\\
p_2&=&\displaystyle s_2+\frac{s_1^2}{2}-\frac{s_1}{2} &&& s_2&=&\displaystyle p_2-\frac{p_1^2}{2}+\frac{p_1}{2}\\
p_3&=&\displaystyle s_3-\frac{s_1}{3}+s_1s_2 +\frac{s_1^3}{3} &&& s_3&=&\displaystyle p_3+\frac{p_1}{3}-p_1p_2-\frac{p_1^2}{2}+\frac{p_1^3}{6}\\
&&&&&&&\\
\hline r_1&=&s_1 &&& s_1&=&r_1\\
r_2&=&\displaystyle s_2+\frac{3s_1^2}{2}-\frac{s_1}{2} &&& s_2&=&\displaystyle r_2-\frac{3r_1^2}{2}+\frac{r_1 }{2}\\
r_3&=&\displaystyle s_3-\frac{s_1}{3}+3s_1s_2-s_1^2+\frac{7s_1^3}{3} &&& s_3&=&\displaystyle r_3+\frac{r_1}{3}-3r_1r_2-\frac{r_1^2}{2}+\frac{13r_1^3}{6}\\
&&&&&&&\\ \hline \end{array}$$

{\bf Remark.} These formulas are false if the characteristic of the base field is not zero.
For example, let us take $S(h)=h$. The formulas of proposition \ref{13} gives then that $p_1=1$ and $p_n=0$ if $n\geq 2$, so $P(h)=h$
and finally $R(h)=\frac{1}{1-h}$: as a consequence, $H=K[h]$ as a Hopf algebra. 
If the characteristic of the base field is a prime integer $p$, then $Prim(H)=Vect(X^{p^k}\mid k\in \mathbb{N})$, so $P(h)\neq h$: contradiction.\\

Here are several applications of these formulas, for the Hopf algebras of the introduction:
$$\begin{array}{c|c|c|c|c|c|c|c|c}
&s_1&s_2&s_3&s_4&s_5&s_6&s_7&s_8\\
\hline\H_{LR} \mbox{ or }\YSym \mbox{ or } \H_{NCK}&1&1&1&3&7&24&72&242\\
\hline 2\mbox{-}As(1)&1&1&2&8&31&141&642&3\:070\\
\hline \FQSym\mbox{ or }\H_{ho}&1&1&2&10&55&377&2\:892&25\:007\\
\hline P\Pi \mbox{ or } \NCQSym&1&2&6&39&305&2\:900&31\:460&385\:080\\
\hline \PQSym \mbox{ or }\H_o&1&2&9&80&901&12\:564&206\:476&3\:918\:025\\
\hline \H_{UBP}&1&2&9&86&1\:083&17\:621&353\:420&8\:553\:300\\
\hline \H_{DP}&1&2&12&165&3\:545&116\:621&5\:722\:481&412\:795\:614\\
\hline R\Pi \mbox{ or }S\Pi&1&3&26&467&12\:518&471\:215&23\:728\:881&1\:545\:184\:651
\end{array}$$
We here denote by $2$-$As(1)$ the free $2$-$As$ algebra on one generator.
The third row is sequence A122826 of \cite{Sloane}, whereas the fifth row is sequence A122720.\\

Note that, for all $n\geq 1$, $s_n=S_n(r_1,\ldots,r_n)$  for particular polynomials $S_n(R_1,\ldots,R_n) \in \mathbb{Q}[R_1,\ldots,R_n]$.
We define these polynomials  here:

\begin{defi} \label{14}
We put in the algebra $\mathbb{Q}[R_1,\ldots,R_n,\ldots][[h]]$:
\begin{eqnarray*}
\sum_{n=1}^\infty P_n(R_1,\ldots,R_n)h^n&=&1-\frac{1}{\displaystyle 1+\sum_{n=1}^\infty R_nh^n},\\
\sum_{n=1}^\infty S_n(R_1,\ldots,R_n)h^n&=&1-\prod_{n=1}^\infty (1-h^n)^{P_n(S_1,\ldots,S_n)}\\
&=&1-\prod_{n=1}^\infty \sum_{k=0}^\infty \frac{P_n(R_1,\ldots,R_n)\ldots (P_n(R_1,\ldots,R_n)-k+1)}{k!} h^{nk}.
\end{eqnarray*}\end{defi}

{\bf Examples.}
\begin{eqnarray*}
S_1(R_1)&=&R_1\\
S_2(R_1,R_2)&=&\displaystyle R_2-\frac{3R_1^2}{2}+\frac{R_1 }{2}\\
S_3(R_1,R_2,R_3)&=&\displaystyle R_3+\frac{R_1}{3}-3R_1R_2-\frac{R_1^2}{2}+\frac{13R_1^3}{6}
\end{eqnarray*}

{\bf Remark.} It is not difficult to show that if $r_i \in \mathbb{Z}$ for all $1\leq i \leq n$, then $P_n(r_1,\ldots,r_n)$, $S_n(r_1,\ldots,r_n)\in \mathbb{Z}$.

\subsection{Free and cofree Hopf subalgebras}

\begin{prop} \label{15}

($char(K)=0$). Let $H$ be a graded, connected, free and cofree Hopf algebra, with a non-degenerate symmetric Hopf pairing. 
Let $\overline{\g'}$ be a graded, non-isotropic subspace of $\frac{\g}{\g\cap H^{+2}}$.
There exists a free and cofree Hopf subalgebra $H'$ of $H$, such that $\overline{\g'}=\frac{\g'}{\g'\cap H'^{+2}}$.
\end{prop}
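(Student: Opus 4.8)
The plan is to build $H'$ inductively, degree by degree, in the same spirit as the constructions in theorem \ref{4} and proposition \ref{6}, keeping track at each stage of a free Hopf subalgebra $H'_{\langle n\rangle}$ of $H_{\langle n\rangle}$ whose space of primitive-indecomposable elements in degree $\le n$ agrees with the prescribed $\overline{\g'}$. First I would fix, for each $n\ge 1$, a graded lift of $\overline{\g'}$: since $\overline{\g'}_n$ is non-isotropic in $\frac{\g}{\g\cap H^{+2}}$ with respect to the induced pairing (lemma \ref{2} guarantees this induced pairing is non-degenerate), I can choose inside $\g_n$ a subspace $\h'_n$ mapping isometrically onto $\overline{\g'}_n$; enlarging $\h'_n$ to a full complement $\h_n$ of $(\g\cap H^{+2})_n$ in $\g_n$ and then running lemma \ref{3}, I get a decomposition $H_n=(\g\cap H^{+2})_n\oplus m_n\oplus \h_n\oplus w_n$ in which $\h_n$, hence $\h'_n$, is non-isotropic and orthogonal to $m_n$ and $w_n$, and the matrix of the pairing has the block form of the Remark after lemma \ref{3}.

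Next I would construct the subalgebra. Set $V'_n:=\h'_n$ for all $n$, and let $H'$ be the subalgebra of $H$ generated by $\bigoplus_{n\ge 1} V'_n$. The key point is that $H'$ is a free algebra on $\bigoplus V'_n$: indeed $\h'_n\subseteq \g_n$, so as in corollary \ref{12} we have $\h'_n\cap H^{+2}\subseteq \g\cap H^{+2}$, and since $\h'_n$ sits inside the complement $\h_n$ of $(\g\cap H^{+2})_n$, it meets $H^{+2}$ trivially; a standard minimal-degree argument then shows $\bigoplus V'_n$ generates a free subalgebra. Because each $V'_n$ consists of primitive elements, $H'$ is in fact a cocommutative Hopf subalgebra of $H$, and by Cartier–Quillen–Milnor–Moore (we are in characteristic zero) $H'\approx \U(\g')$ where $\g'=Prim(H')$ is the free Lie algebra on $\bigoplus V'_n$. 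In particular $\g'\cap H'^{+2}=[\g',\g']$ (corollary \ref{12} applied to $H'$, once we know $H'$ is free and cofree — or directly, since $\U$ of a free Lie algebra is a tensor algebra whose indecomposable primitives are exactly the generators), so $\frac{\g'}{\g'\cap H'^{+2}}$ is canonically $\bigoplus V'_n=\bigoplus \h'_n$, which by choice maps isometrically onto $\overline{\g'}$. This already identifies $\frac{\g'}{\g'\cap H'^{+2}}$ with $\overline{\g'}$ as required; it remains to check $H'$ is free and cofree as a graded connected Hopf algebra.

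Freeness is clear from the construction. For cofreeness I would use the duality picture developed in this section: cofreeness of $H'$ is equivalent to freeness of $(H')^*$, and since $H'\approx \U(\g')$ with $\g'$ free, $(H')^*\approx \U(\g')^*$. Now $\U(\g')^*=(H'^*)$ is commutative, and its being free as an algebra is precisely the statement that the Lie coalgebra $coPrim$ is cofree — equivalently, by the identifications before lemma \ref{8}, that $\U(\g')$, being cocommutative, is a ``shuffle-type'' dual of a tensor algebra. Concretely: $\U(\mathrm{free\ Lie})\approx T(W)$ as a Hopf algebra for $W=\bigoplus V'_n$, and $T(W)^*\approx coT(W^*)$ is a shuffle algebra, which is free as an algebra by Radford's theorem (in characteristic zero). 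Hence $(H')^*$ is free, so $H'$ is cofree. Assembling: $H'$ is a graded connected free and cofree Hopf subalgebra of $H$ with $\frac{\g'}{\g'\cap H'^{+2}}=\overline{\g'}$.

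\medskip
\noindent\emph{Main obstacle.} The delicate point is \emph{cofreeness} of $H'$: freeness and the identification of the indecomposable primitives are essentially immediate from choosing the generators inside $\g$ and staying transverse to $H^{+2}$, but to conclude cofreeness one must invoke that in characteristic zero the enveloping algebra of a free Lie algebra is cofree as a coalgebra — i.e. a shuffle algebra on a graded space with finite-dimensional components is free (Radford). One could alternatively avoid this black box by comparing Poincaré–Hilbert series: proposition \ref{13} shows a free Hopf algebra with $\frac{\g}{\g\cap H^{+2}}$-series $S(h)$ is forced to have coalgebra-series $\frac{1}{1-P(h)}$ with $1-S(h)=\prod(1-h^n)^{p_n}$, and one checks $H'=T(W)$ realizes exactly this, so $H'$ is abstractly isomorphic to a cofree Hopf algebra by theorem \ref{7}; but verifying that theorem \ref{7} applies requires already knowing $H'$ is cofree, so the cleanest route really is the Radford input. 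Everything else — the inductive bookkeeping, non-isotropy via lemma \ref{3}, the minimal-degree freeness argument — is routine.
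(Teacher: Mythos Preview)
Your construction has a genuine gap at exactly the point you flag as the ``main obstacle'', and it cannot be repaired as stated. By taking $H'$ to be the subalgebra generated only by $\bigoplus_n \h'_n\subseteq\g$, you produce a \emph{cocommutative} Hopf subalgebra (it is generated by primitives), namely $\U(\g')\cong T(\h')$. But a graded connected cocommutative Hopf algebra is cofree in the sense of this paper (i.e.\ isomorphic as a coalgebra to a cotensor coalgebra $coT(V)$) only when $\dim V\le 1$: the deconcatenation coproduct on $coT(V)$ is cocommutative precisely in that case. So as soon as $\dim(\overline{\g'})\ge 2$ your $H'$ is \emph{not} cofree. Your appeal to Radford conflates two notions of freeness: Radford's theorem says the shuffle algebra $coT(W^*)\cong T(W)^*$ is a free \emph{commutative} (polynomial) algebra, not a free associative algebra; the equivalence ``$H'$ cofree $\Leftrightarrow (H')^*$ free'' needs the latter. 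A commutative algebra is a tensor algebra only in the trivial case, so the Radford argument does not give what you need.

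What the paper does is essentially different and this difference is the whole point. One needs to enlarge the generating set by non-primitive elements: besides $\h'_n$, one must adjoin at each degree a subspace $w'_n$ of the complement $w_n$ (the $w_n$'s of lemma \ref{3} are never primitive when $(\g\cap H^{+2})_n\ne 0$). The paper does this inductively, simultaneously building an ideal $I$ generated by the ``complementary'' pieces $\h''\oplus w''$ and arranging $H'\perp I$ with $H=H'\oplus I$; then the restriction of the ambient Hopf pairing to $H'$ is non-degenerate, so $H'$ is self-dual, and since $H'$ is free it is automatically cofree. The inclusion of the $w'_n$'s is precisely what prevents $H'$ from being cocommutative and makes cofreeness possible. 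Your inductive bookkeeping and the lift of $\overline{\g'}$ are fine, but the subalgebra you end up with is too small; you have essentially constructed $C_{H'}=\U(\g')$ rather than $H'$ itself.
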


\begin{proof} Let us choose a graded complement $\h$ of $\g \cap H^{+2}$ in $\g$. The canonical projection $\pi$ from $\g$ to $\frac{\g}{\g \cap H^{+2}}$
induces an isometry $\pi$ from $\h$ to $\frac{\g}{\g \cap H^{+2}}$. Let $\h'$ be $\pi^{-1}_{\mid \h} (\overline{\g'})$: $\h'$ is a supspace of $\h$, 
isometric with $\overline{\g}'$. As $\overline{\g}'$ is non-isotropic, $\h'$ is non-isotropic. Let us put $\h''=\h'^{\perp}\cap \h$. 
Then $\h'$ and $\h''$ are graded subspaces of $\h$ and $\h=\h'\oplus \h''$. By corollary \ref{12}, $\h$ freely generates $\g$ as a Lie algebra. 
We then denote by $\g'$ the sub-Lie algebra of $\g$ generated by $\h'$ and by $\g''$ the Lie ideal of $\g$ generated by $\h''$. Then $\g=\g'\oplus \g''$ and:
$$\g \cap H^{+2}=[\g,\g]=[\g',\g']\oplus [\g,\g'']=(\g' \cap H^{+2})\oplus (\g'' \cap H^{+2}).$$

We now construct by induction on $n$ subspaces $m'_n\oplus m''_n=m_n$ and $w'_n \oplus w''_n=w_n$ of $H_n$, as in lemma \ref{3}, such that:
\begin{enumerate}
\item The subalgebra $H'_{\langle n\rangle}$ generated by $\h'_1\oplus \ldots \oplus \h'_n \oplus w'_1\oplus \ldots \oplus w''_n$ is a Hopf subalgebra.
\item Let $I_{\langle n\rangle}$ be the ideal of $H$ generated by $\h''_1\oplus \ldots \oplus \h''_n\oplus w''_1\oplus \ldots \oplus w''_n$. 
Then $H'_{\langle n\rangle}\perp I_{\langle n\rangle}$.
\end{enumerate}
For $n=0$, all these subspaces are $0$. Let us assume they are constructed at all rank $<n$.
Let us  choose a complement $m'_n$ of $(\g \cap {H'}^{+2}_{\langle n-1 \rangle})_n$ in $({H'}^{+2}_{\langle n-1 \rangle})_n$.
As $H_{\langle n-1 \rangle}$ is freely generated by $\h_1\oplus \ldots \oplus \h_{n-1}\oplus w_1\oplus \ldots \oplus w_n$,
$(H_{\langle n-1 \rangle})_n=H^{+2}_n=(H'_{\langle n-1\rangle})_n\oplus (I_{\langle n-1\rangle})_n$. By the induction hypothesis, these subspaces
are orthogonal. We can then choose a complement $m''_n$ of $m'_n\oplus (\g \cap H^{+2})_n$ in $H^{+2}_n$ included in $(I_{\langle n-1\rangle})_n$.
As $m'_n \subseteq H'_{\langle n-1 \rangle}$, $m'_n \perp m''_n$. We put $m_n=m'_n \oplus m''_n$. We finally choose a $w_n$ as in lemma \ref{3}. \\

As a summary, we obtain a decomposition:
$$H_n=\underbrace{(\g'\cap {H'_{\langle n-1 \rangle}}^{+2})_n \oplus \g''_n}_{(\g \cap H^{+2})_n}
\oplus \underbrace{m'_n \oplus m''_n}_{m_n}\oplus \underbrace{\h'_n \oplus \h''_n}_{\h_n}\oplus w_n,$$
$$(H_{\langle n-1\rangle})_n=\left(\g' \cap {H'_{\langle n-1 \rangle}}^{+2}\right)_n \oplus m'_n.$$
In a basis adapted to the decomposition, by lemma \ref{3} the matrix of the pairing has the form:
$$\left(\begin{array}{cc|cc|cc|cc}
0&0&0&0&0&0&I&0\\\
0&0&0&0&0&0&0&I\\
\hline 0&0&A'&0&0&0&0&0\\
0&0&0&A''&0&0&0&0\\
\hline 0&0&0&0&B'&0&0&0\\
0&0&0&0&0&B''&0&0\\
\hline I&0&0&0&0&0&0&0\\
0&I&0&0&0&0&0&0
\end{array}\right).$$
We naturally decompose $w_n$ as a direct sum $w'_n\oplus w''_n$, in order to split the last column and last row of the matrix.
All the required subspaces are now defined. A basis of $(H'_{\langle n\rangle})_n$ is given by the blocks $1$, $3$, $5$, $7$ of the basis.
A basis of $(I_{\langle n\rangle})_n$ is given by the blocks $2$, $4$, $6$, $8$ of the basis. It is matricially clear that 
$(H'_{\langle n\rangle})_n \perp (I_{\langle n\rangle})_n$.
As these subspaces are in direct sum, $(H'_{\langle n\rangle})_n^\perp=(I_{\langle n\rangle})_n$ in $H_n$.

Let us take $x \in w'_n$. If $y\in (I_{\langle n\rangle})_k$, $z\in H_l$, with $k+l=n$, then as $yz \in (I_{\langle n\rangle})_n$,
$0=\langle x,yz \rangle=\langle \Delta(x) y\otimes z\rangle$. So $\Delta(x) \in (I_{\langle n\rangle}\otimes H)_n^\perp$.
Similarly, $\Delta(x)\in (H\otimes I_{\langle n\rangle})_n^\perp$. As $(I_{\langle n\rangle})_i^\perp=(H'_{\langle n\rangle})_i$
by the preceding remark if $i=n$ and the induction hypothesis if $i<n$, we deduce that $\Delta(x) \in H'_{\langle n\rangle} \otimes H'_{\langle n\rangle}$.
As $H'_{\langle n-1\rangle}$ is a Hopf subalgebra, we deduce that $H'_{\langle n\rangle}$ is a Hopf subalgebra.\\

Let us prove that $(H'_{\langle n\rangle})_i \perp (I_{\langle n\rangle})_i$ for all $i$. We already prove it for $i\leq n$.
If $i>n+1$, let us take $x\in (H'_{\langle n\rangle})_i$ and $y\in (I_{\langle n\rangle})_i$. As $i>n$, we can assume that $y=y_1y_2y_3$,
with $y_1,y_3 \in H$, $y_2 \in (I_{\langle n\rangle})_k$ with $k\leq n$. Using the homogeneity of the pairing and the property of orthogonality at rank $k$:
$$\langle x,y \rangle=\langle \underbrace{\Delta^{(2)}(x)}_{\in H_{\langle n\rangle}^{\otimes 3}}, y_1\otimes y_2 \otimes y_3\rangle=0,$$
as $y_2 \perp H_{\langle n\rangle}$. This ends the induction.\\

{\it Conclusion.} We the take $H'$ the subalgebra generated by $\h'\oplus w'$ and $I$ the ideal generated by $\h''+w''$.
By construction, $H'$ is a free Hopf algebra. As $H$ is freely generated by $\h' \oplus w'\oplus \h'' \oplus w''$, $H'\oplus I=H$.
Moreover $H' \perp I$, so $H'=I^\perp$, comparing their formal series. As a conclusion, $H'$ is a non-isotropic subspace of $H$,
so has a non-degenerate, symmetric Hopf pairing. By construction of $\h'$, $\frac{\g'}{\g'\cap {H'}^{+2}}$ is $\overline{\g'}$. \end{proof}

\subsection{Existence of free and cofree Hopf algebra with a given formal series}

\begin{cor} \label{16}
($char(K)=0$). Let $V$ be a graded space such that $V_0=(0)$, with a symmetric, homogeneous non-degenerate pairing.
There exists a graded, connected, free and cofree Hopf algebra $H$,  with a symmetric, non-degenerate Hopf pairing, 
such that $\frac{\g}{\g \cap H^{+2}}$ is isometric with $V$. Moreover, $H$ is unique, up to an isomorphism.
\end{cor}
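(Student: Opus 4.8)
The plan is to establish existence by an explicit construction and then derive uniqueness from the results already proved in Section~1. For existence, the natural idea is to build $H$ as a non-commutative Connes--Kreimer Hopf algebra: these are known to be free and cofree (they appear in the introduction's list), so one just needs to arrange the space of indecomposable primitives to be isometric with $V$. Concretely, I would first produce a graded, connected, free and cofree Hopf algebra $H_0$ whose space of indecomposable primitives $\frac{\g_0}{\g_0\cap H_0^{+2}}$ has graded dimension at least that of $V$ in each degree --- for instance a suitably decorated $\H_{NCK}^\D$ with $\D$ large enough (in each degree we need at least $\dim V_n$ decorations at the right level, and this is achievable since the indecomposable primitives of a decorated NCK algebra can be made as large as we like). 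By Corollary~\ref{5} (applied to $H_0$, which is free and cofree) $H_0$ is self-dual, so we may equip it with a symmetric non-degenerate Hopf pairing via Theorem~\ref{4}, choosing on $\frac{\g_0}{\g_0\cap H_0^{+2}}$ a symmetric non-degenerate pairing whose restriction to a chosen non-isotropic graded subspace $\overline{\g'}\subseteq \frac{\g_0}{\g_0\cap H_0^{+2}}$ is isometric with $V$ (this is possible once $\dim(\overline{\g'})_n=\dim V_n$, which we can arrange by the dimension count above, taking a complement on which the pairing is, say, a hyperbolic or diagonal form).

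**Next I would apply Proposition~\ref{15}** to $H_0$ and the non-isotropic subspace $\overline{\g'}$: this yields a free and cofree Hopf subalgebra $H\subseteq H_0$ with $\frac{\g}{\g\cap H^{+2}}=\overline{\g'}$ as subspaces of $H_0$, hence isometric with $V$ for the induced pairing. Since $H$ is a non-isotropic subspace of $H_0$ (this is part of the conclusion of Proposition~\ref{15}), the restriction of the symmetric non-degenerate Hopf pairing of $H_0$ to $H$ is again symmetric and non-degenerate, and it is a Hopf pairing because $H$ is a Hopf subalgebra. This gives the desired $H$: graded, connected, free and cofree, with a symmetric non-degenerate Hopf pairing inducing on $\frac{\g}{\g\cap H^{+2}}$ a pairing isometric with $V$.

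**For uniqueness**, suppose $H$ and $H'$ are two such Hopf algebras. By construction each carries a symmetric non-degenerate Hopf pairing inducing on its space of indecomposable primitives a pairing isometric with $V$; in particular $\frac{\g}{\g\cap H^{+2}}$ and $\frac{\g'}{\g'\cap H'^{+2}}$, with the induced pairings, are both isometric with $V$ and hence isometric with each other, in each degree $n\geq 1$. Proposition~\ref{6} then provides a Hopf algebra isomorphism from $H$ to $H'$ (which is moreover an isometry). This settles uniqueness up to isomorphism.

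**The main obstacle** I anticipate is the existence half, specifically arranging that a decorated non-commutative Connes--Kreimer Hopf algebra has at least $\dim V_n$ indecomposable primitives in each degree $n$ --- one must check that adding decorations in degree $n$ produces exactly that many new indecomposable primitive generators in degree $n$ (the generator $\tdun{d}$ for a decoration $d$ of degree $n$ is primitive and indecomposable), so choosing a graded set $\D$ with $\#\D_n=\dim V_n$ does the job. One should also be slightly careful that the non-isotropic subspace needed for Proposition~\ref{15} really exists: over a field of characteristic $\neq 2$ any finite-dimensional space admits a non-degenerate symmetric bilinear form, and inside the (possibly larger) space of indecomposable primitives of $H_0$ one can choose the decomposition so that the copy of $V$ sits as a non-isotropic summand; everything else is a routine assembly of Theorem~\ref{4}, Corollary~\ref{5}, Proposition~\ref{15} and Proposition~\ref{6}.
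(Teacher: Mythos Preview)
Your proposal is correct and follows essentially the same route as the paper: build a decorated planar-tree (non-commutative Connes--Kreimer) Hopf algebra $H_0=\H_{NCK}^\D$ with $\#\D_n=\dim V_n$, use the primitive indecomposable generators $\tdun{d}$ to embed $V$ into $\frac{\g_0}{\g_0\cap H_0^{+2}}$, extend the pairing of $V$ to a non-degenerate symmetric one on the whole quotient, lift to a Hopf pairing via Theorem~\ref{4}, and then cut down to $H$ via Proposition~\ref{15}; uniqueness is exactly Proposition~\ref{6}. The only cosmetic difference is that the paper chooses $\D$ indexed by a basis of $V$ from the start rather than first speaking of ``large enough'' and then specializing.
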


\begin{proof} {\it Existence}. Let us choose a basis $(v_d)_{d\in \D}$ formed by homogeneous elements of $V$, where $\D$ is a graded set, 
such that $deg(d)=deg(v_d)$  for all $d\in \D$. Let us consider the Hopf algebra $H_{PR}^\D$ of plane trees  decorated by $\D$ decorated by $\D$: 
from \cite{Foissy1}, it is free and cofree. Moreover, the plane trees $\tdun{$d$}$, $d\in \D$, are linearly independant, primitive elements of $H_{PR}^\D$,
and the space generated by these elements intersects $(H_{PR}^\D)^{+2}$ on $(0)$. So the elements $(\overline{\tdun{$d$}})_{d\in \D}$ are linearly
independant elements of $\frac{\g}{\g \cap (H_{PR}^\D)^{+2}}$. The subspace of $\frac{\g}{\g \cap (H_{PR}^\D)^{+2}}$ generated by 
the $\overline{\tdun{$d$}}$ is identified with $V$. The pairing of $V$ can be arbitrarily extended to $\frac{\g}{\g \cap (H_{PR}^\D)^{+2}}$ 
in a non-degenerate symmetric, homogeneous pairing. This pairing induces a pairing on $H_{PR}^\D$ by theorem \ref{4}. 
From proposition \ref{15}, $H_{PR}^\D$ contains a graded, connected, free and cofree Hopf subalgebra $H'$, such that 
$\frac{\g'}{\g'\cap H'^{+2}}=V$ as a graded quadratic space.\\

{\it Unicity}. Comes directly from proposition \ref{6}. \end{proof}

\begin{cor} \label{17}
($char(K)=0$). Let $(s_n)_{n\geq 1}$ be any sequence of integers. There exists a graded, connected, free and cofree, Hopf algebra $H$ such that
$dim\left(\left(\frac{\g}{\g \cap H^{+2}}\right)_n\right)=s_n$ for all $n \geq 1$. Moreover, it is unique, up to an isomorphism.
\end{cor}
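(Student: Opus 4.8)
The plan is to deduce this corollary directly from Corollary \ref{16} (for existence) together with Theorem \ref{7} (for uniqueness); no further argument of substance is needed. First I would build, from the prescribed sequence $(s_n)_{n\geq 1}$ of nonnegative integers, a graded vector space $V=\bigoplus_{n\geq 1}V_n$ with $V_0=(0)$ and $\dim(V_n)=s_n$, equipped with a symmetric, homogeneous, non-degenerate pairing. Concretely I would fix a basis $(e^{(n)}_1,\ldots,e^{(n)}_{s_n})$ of each $V_n$, declare $V_n\perp V_m$ for $n\neq m$, and set $\langle e^{(n)}_i,e^{(n)}_j\rangle=\delta_{i,j}$; since $\mathrm{char}(K)\neq 2$ (indeed for any field, the identity matrix being symmetric and invertible) this defines a bona fide symmetric, non-degenerate, homogeneous pairing on $V$. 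This ``construction'' is the only explicit input, and it is routine.

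For existence, I would then feed $(V,\langle-,-\rangle)$ into Corollary \ref{16}: it produces a graded, connected, free and cofree Hopf algebra $H$, carrying a symmetric non-degenerate Hopf pairing, such that $\frac{\g}{\g\cap H^{+2}}$ is isometric with $V$. In particular $\frac{\g}{\g\cap H^{+2}}$ and $V$ are isomorphic as graded vector spaces, whence $\dim\left(\left(\frac{\g}{\g\cap H^{+2}}\right)_n\right)=\dim(V_n)=s_n$ for every $n\geq 1$, which is exactly the assertion.

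For uniqueness, suppose $H'$ is another graded, connected, free and cofree Hopf algebra with $\dim\left(\left(\frac{\g'}{\g'\cap H'^{+2}}\right)_n\right)=s_n$ for all $n$. Then $\frac{\g}{\g\cap H^{+2}}$ and $\frac{\g'}{\g'\cap H'^{+2}}$ have the same dimension in each degree, so they are isomorphic as graded vector spaces; the implication $2\Longrightarrow 1$ of Theorem \ref{7} then yields an isomorphism of graded Hopf algebras $H\cong H'$. I would stress that this step uses only the graded-space structure, not the isometry coming out of Corollary \ref{16}, so the resulting $H$ is genuinely independent of all the auxiliary choices (the basis of $V$, the chosen form on each $V_n$, and the internal choices inside Corollary \ref{16}).

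As for the main obstacle: there essentially is none, since the corollary merely repackages earlier results. The only point deserving a remark is the existence of a non-degenerate symmetric bilinear form on each finite-dimensional component $V_n$, which is immediate here thanks to the hypothesis $\mathrm{char}(K)\neq 2$; and ``sequence of integers'' must of course be read as ``sequence of nonnegative integers'', since each $s_n$ is to be realized as a dimension.
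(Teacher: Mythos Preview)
Your proof is correct and follows essentially the same approach as the paper: construct a graded space $V$ with $\dim(V_n)=s_n$ and a non-degenerate symmetric homogeneous pairing, apply Corollary~\ref{16} for existence, and invoke the implication $2\Longrightarrow 1$ of Theorem~\ref{7} for uniqueness. Your write-up is slightly more detailed (you spell out the pairing explicitly and note the implicit nonnegativity of the $s_n$), but the substance is identical.
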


\begin{proof} {\it Existence.} Let $V$ be a graded space such that $dim(V_n)=s_n$ for all $n \geq 1$.
Let us choose a non-degenerate, homogeneous symmetric pairing on $V$. Then corollary \ref{16} proves the existence of $H$.\\

{\it Unicity.} Comes directly from theorem \ref{7}, $2\Longrightarrow 1$. \end{proof}\\

So graded, connected, free and cofree Hopf algebras are entirely determined by sequences of dimensions:

\begin{cor}\label{18}
($char(K)=0$). Let $(r_n)_{n\geq 1}$ be any sequence of integers. There exists a graded, connected, free and cofree Hopf algebra $H$
such that $dim(H_n)=r_n$ for all $n\geq 1$ if, and only if, $S_n(r_1,\ldots,r_n)\geq 0$ for all $n\geq 1$ (recall that $S_n(R_1,\ldots, R_n)$
is defined in definition \ref{14}).
\end{cor}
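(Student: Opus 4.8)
The statement is essentially a repackaging of Corollary \ref{17} through the polynomials $S_n$ of Definition \ref{14}, so the proof should be short. The key observation is that, by Proposition \ref{13}, the dimension sequence $(r_n)$ of a graded, connected, free and cofree Hopf algebra $H$ determines the sequence $(s_n)$ with $s_n=\dim\left(\left(\frac{\g}{\g\cap H^{+2}}\right)_n\right)$ by the explicit formula $s_n=S_n(r_1,\ldots,r_n)$, where the $S_n$ are exactly the polynomials of Definition \ref{14}. Indeed, these polynomials were \emph{defined} by composing the two relations of Proposition \ref{13}: point 1 gives $P_n$ in terms of $R_1,\dots,R_n$, and then the product formula of point 2 gives $S_n$ in terms of the $P_n$, hence in terms of the $R_n$. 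So whenever $H$ exists with $\dim(H_n)=r_n$, necessarily $s_n=S_n(r_1,\dots,r_n)$, and since the $s_n$ are dimensions of vector spaces they satisfy $s_n\geq 0$; this gives the ``only if'' direction.

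For the ``if'' direction, suppose $(r_n)_{n\geq 1}$ is a sequence of integers with $S_n(r_1,\dots,r_n)\geq 0$ for all $n\geq 1$. Put $s_n=S_n(r_1,\dots,r_n)$; by the remark following Definition \ref{14}, each $s_n$ is in fact a (nonnegative) integer. By Corollary \ref{17}, there exists a graded, connected, free and cofree Hopf algebra $H$ with $\dim\left(\left(\frac{\g}{\g\cap H^{+2}}\right)_n\right)=s_n$ for all $n\geq 1$. It remains to check that this $H$ has $\dim(H_n)=r_n$. For that, invert the construction: Proposition \ref{13}(2) recovers the primitive dimensions $p_n$ from the $s_n$ (this is the inversion of $1-S(h)=\prod_n(1-h^n)^{p_n}$, which exists in $\mathbb{Q}[[h]]$ and is what defines $P_n(S_1,\dots,S_n)$), and then Proposition \ref{13}(1) gives $R(h)=\frac{1}{1-P(h)}$, i.e.\ the dimensions of $H$. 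Tracing through, $\dim(H_n)$ is obtained from $(s_n)$ by precisely the inverse substitution to the one defining $S_n$ from $(R_n)$; since $S_n(r_1,\dots,r_n)=s_n$, this inverse substitution returns $r_n$. Hence $\dim(H_n)=r_n$ for all $n\geq 1$.

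The only mild subtlety to be careful about is the bookkeeping of the substitution: one must verify that the map $(r_1,\dots,r_n)\mapsto(s_1,\dots,s_n)=(S_1,\dots,S_n)(r_\bullet)$ is a bijection on integer tuples (equivalently that it is triangular and invertible), so that starting from the $s_n$ produced by Corollary \ref{17} and running the inverse substitution genuinely returns the prescribed $r_n$ rather than some other preimage. This triangularity is immediate from Definition \ref{14}: $S_n(R_1,\dots,R_n)=R_n+(\text{polynomial in }R_1,\dots,R_{n-1})$, since the top-degree contribution to the coefficient of $h^n$ in $1-\prod_m(1-h^m)^{P_m}$ comes from the $m=n$, $k=1$ term and equals $P_n(R_1,\dots,R_n)=R_n+(\text{lower})$. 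So each $S_n$ is, up to a polynomial in the earlier variables, just $R_n$, and the substitution is invertible over $\mathbb{Z}$. This is the one computational point that needs to be pinned down; everything else is a direct appeal to Corollary \ref{17} and Proposition \ref{13}.

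I expect no real obstacle here: the content has already been done in Corollary \ref{17} and Proposition \ref{13}, and Corollary \ref{18} is the translation into the language of the universal polynomials $S_n$. The main thing is to state cleanly that $s_n=S_n(r_1,\dots,r_n)$ is forced, that $s_n\geq 0$ is necessary and (being nonnegative integers) sufficient to invoke Corollary \ref{17}, and that the resulting $H$ has the right Poincar\'e--Hilbert series by the triangular invertibility of the $S_n$.
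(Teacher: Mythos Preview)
Your proposal is correct and follows essentially the same approach as the paper: the forward direction is $S_n(r_1,\dots,r_n)=\dim\left(\left(\frac{\g}{\g\cap H^{+2}}\right)_n\right)\geq 0$, and the backward direction sets $s_n=S_n(r_1,\dots,r_n)$, invokes Corollary~\ref{17} to build $H$, and then recovers $\dim(H_n)=r_n$ via Proposition~\ref{13}. Your explicit discussion of the triangularity $S_n(R_1,\dots,R_n)=R_n+(\text{polynomial in }R_1,\dots,R_{n-1})$ to justify the inversion is a welcome clarification that the paper leaves implicit (it appears only as the Remark immediately after the proof).
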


\begin{proof} $\Longrightarrow$. As $S_n(r_1,\ldots,r_n)=dim\left(\left(\frac{\g}{\g \cap H^{+2}}\right)_n\right)$ for all $n \geq 1$.\\

$\Longleftarrow$. Let us put $s_n=S_n(r_1,\ldots,r_n)$. From corollary \ref{17}, there exists a graded, connected, free and cofree Hopf algebra $H$ 
such that $dim\left(\left(\frac{\g}{\g \cap H^{+2}}\right)_n\right)=s_n$ for all $n \geq 1$. From proposition \ref{13}, $dim(H_n)=r_n$ for all $n \geq 1$. \end{proof}\\

{\bf Remark.} It is not difficult to show that $S_n(R_1,\ldots,R_n)-R_n \in \mathbb{Q}[R_1,\ldots,R_{n-1}]$. 
So the condition on the $r_n$ of corollary \ref{18} can be seen as a growth condition on the coefficients $r_n$.

\subsection{Noncommutative Connes-Kreimer Hopf algebras}

When is a free and cofree Hopf algebra a noncommutative Connes-Kreimer Hopf algebra?

\begin{defi}\textnormal{
The family of polynomials $D_n(R_1,\ldots,R_n) \in \mathbb{Q}[R_1,\ldots,R_n]$ is defined by:
$$\sum_{n=1}^\infty D_n(R_1,\ldots,R_n) h^n=\sum_{n=1}^\infty (-1)^{n+1}n \left(\sum_{k=1}^\infty R_k h^k\right)^n
=\frac{\displaystyle \sum_{k=1}^\infty R_k h^k-1}{\displaystyle \left(\sum_{k=1}^\infty R_k h^k\right)^2}.$$
}\end{defi}

{\bf Examples.}
\begin{eqnarray*}
D_1(R_1)&=&R_1\\
D_2(R_1,R_2)&=&R_2-2R_1^2\\
D_3(R_1,R_2,R_3)&=&R_3-4R_2R_1+3R_1^2
\end{eqnarray*}

\begin{theo} \label{20}
Let $H$ be a graded, connected, free and cofree Hopf algebra. We put $r_n=dim(H_n)$ for all $n \geq 1$. 
Then $H$ is isomorphic to a noncommutative Connes-Kreimer Hopf algebra if, and only if, $D_n(r_1,\ldots,r_n) \geq 0$ for all $n\geq 1$.
\end{theo}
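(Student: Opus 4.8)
The plan is to combine Theorem \ref{7} with an explicit computation of the Poincar\'e-Hilbert series of the noncommutative Connes-Kreimer Hopf algebras. Recall that such an algebra is $\H_{NCK}^\D$ for a graded set $\D$ having no element in degree $0$: it is the free associative algebra on the set of planar rooted trees decorated by $\D$, the degree of a decorated tree being the sum of the degrees of its decorations, and it is free and cofree. By Theorem \ref{7}, a free and cofree Hopf algebra is determined up to graded isomorphism by its Poincar\'e-Hilbert series, so the statement reduces to showing that there is a graded set $\D$ with $dim((\H_{NCK}^\D)_n)=r_n$ for all $n\ge 1$ if and only if $D_n(r_1,\dots,r_n)\ge 0$ for all $n\ge 1$.

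First I would compute, for a fixed graded set $\D$ having $g_n$ elements in degree $n$ (so $g_0=0$), the series $R(h)=1+\sum_{n\ge 1}dim((\H_{NCK}^\D)_n)h^n$ in terms of $G(h)=\sum_{n\ge 1}g_nh^n$. Since $\H_{NCK}^\D$ is free as an algebra on its decorated planar trees, its monomials are the planar forests of decorated trees and $R(h)=1/(1-T(h))$, where $T(h)$ is the generating function of the decorated planar trees. The grafting operators $B_+^d$, $d\in\D$, give a bijection between decorated planar trees and pairs $(d,F)$ with $d\in\D$ and $F$ an ordered forest of decorated trees, under which the degree of a tree is the degree of $d$ plus that of $F$; hence $T(h)=G(h)R(h)$. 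Eliminating $T$ gives $T=(R-1)/R$ and then $G(h)=(R(h)-1)/R(h)^2$. Comparing with the middle expression in the definition of the $D_n$, which equals $(R(h)-1)/R(h)^2$ once one sets $R(h)=1+\sum_{k\ge 1}R_kh^k$, this says $g_n=D_n(r_1,\dots,r_n)$ for all $n\ge 1$, where $r_n=dim((\H_{NCK}^\D)_n)$.

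Next I would note that $F\mapsto (F-1)/F^2$ is a bijection from the formal power series with constant term $1$ onto the formal power series with constant term $0$: writing $F=1+\sum_{k\ge 1}a_kh^k$ and $G=(F-1)/F^2$, the vanishing of the constant term of $G$ makes the $h^n$-coefficient of the identity $F-1=GF^2$ read $a_n=\Phi_n(a_1,\dots,a_{n-1};g_1,\dots,g_n)$ for an explicit polynomial $\Phi_n$, so $F$ is recovered from $G$ recursively, and conversely $G$ is read off from $F$. I would also record that $D_n\in\mathbb Z[R_1,\dots,R_n]$ and $D_0=0$, which is immediate from the expression $\sum_n D_nh^n=\sum_{n\ge 1}(-1)^{n+1}n\bigl(\sum_k R_kh^k\bigr)^n$; in particular $D_n(r_1,\dots,r_n)\in\mathbb Z$ whenever the $r_i$ are integers.

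The theorem then follows. If $H$ is isomorphic to some $\H_{NCK}^\D$, then $dim(H_n)=dim((\H_{NCK}^\D)_n)$ for all $n$, so the second paragraph gives $D_n(r_1,\dots,r_n)=g_n\ge 0$. Conversely, suppose $D_n(r_1,\dots,r_n)\ge 0$ for all $n$; these are then non-negative integers, so there is a graded set $\D$ with exactly $D_n(r_1,\dots,r_n)$ elements in each degree $n$ and none in degree $0$. By the computation above and the injectivity of $F\mapsto (F-1)/F^2$, the Poincar\'e-Hilbert series of $\H_{NCK}^\D$ is the unique series mapping to $\sum_{n\ge 1}D_n(r_1,\dots,r_n)h^n$, hence equals $1+\sum_{n\ge 1}r_nh^n$; since $\H_{NCK}^\D$ is free and cofree, Theorem \ref{7} yields $H\cong\H_{NCK}^\D$. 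The only non-formal ingredient is the identity $G(h)=(R(h)-1)/R(h)^2$, which rests on the two elementary facts that $\H_{NCK}^\D$ is free on its planar trees and that a planar tree is a decorated root carrying an ordered forest; everything else is power-series bookkeeping, and I do not expect a genuine obstacle beyond matching the normalisation conventions for $R(h)$ used in the definition of the $D_n$.
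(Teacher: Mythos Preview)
Your proof is correct and follows essentially the same route as the paper: both establish the identity $G(h)=(R(h)-1)/R(h)^2$ between the generating series of the decorations and that of $\H_{NCK}^\D$, and then invoke Theorem~\ref{7}. The only differences are cosmetic: the paper cites \cite{Foissy1} for the identity where you derive it from the grafting bijection, and the paper inverts the relation explicitly via $R(h)=\frac{1-\sqrt{1-4D(h)}}{2D(h)}$ where you argue abstractly that $F\mapsto(F-1)/F^2$ is a bijection on power series with constant term~$1$.
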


\begin{proof} $\Longrightarrow$. We assume that $H$ is isomorphic to the Hopf algebra of planar trees decorated by $\D$,
here denoted by $\H_{NCK}^\D$. Let $D(h)$ be the formal series of $\D$.
Then, from \cite{Foissy1},  $D(h)=\frac{R(h)-1}{R(h)^2}$, so $Card(\D_n)=D_n(r_1,\ldots,r_n)\geq 0$ for all $n\geq 1$.\\

$\Longleftarrow$. Let $\D$ be a graded set, such that $Card(\D_n)=D_n(r_1,\ldots,r_n)$ for all $n\geq 1$ (it is clear that $D_n(r_1,\ldots,r_n)$ is an integer).
The formal series of $\D$ is $D(h)=\frac{R(h)-1}{R(h)^2}$, so the formal series of $\H_{NCK}^\D$ is:
$$\frac{1-\sqrt{1-4D(h)}}{2D(h)}=R(h).$$
By theorem \ref{7}, $H$ and $\H_{NCK}^\D$ are isomorphic. \end{proof}\\

{\bf Remarks.} \begin{enumerate}
\item Let $H$ be a free and cofree Hopf algebra, such that $s_1=s_2=1$, $s_3=0$. By corollary \ref{17}, this exists.
As $s_2 \neq 0$, $H$ is not equal to $K[X]$. By proposition \ref{13}, $r_1=1$, $r_2=2$, $r_3=4$. So $D_3(r_1,r_2,r_3)=-1<0$:
$H$ is not isomorphic to a non-commutative Connes-Kreimer Hopf algebra.
\item However, at the exception of $K[X]$, all the Hopf algebras of the introduction are isomorphic to a non-commutative Connes-Kreimer Hopf algebra. 
Here are examples of $d_n=Card(\D_n)$ for these objects.
$$\begin{array}{c|c|c|c|c|c|c|c|c}
&d_1&d_2&d_3&d_4&d_5&d_6&d_7&d_8\\
\hline\H_{LR} \mbox{ or }\YSym \mbox{ or } \H_{NCK}&1&0&0&0&0&0&0&0\\
\hline 2\mbox{-}As(1)&1& 0&1&4&17&76&353&1\:688\\
\hline \FQSym\mbox{ or }\H_{ho}&1&0&1&6&39&284&2\:305&20\:682\\
\hline P\Pi \mbox{ or }\NCQSym&1&1&4&28&240&2\:384&26\:832&337\:168\\
\hline \PQSym \mbox{ or }\H_o&1&1&7&66&786&11\:278&189\:391&364\:8711\\
\hline \H_{UBP}&1&1&7&72&962&16\:135&330\:624&8\:117\:752\\
\hline \H_{DP}&1&1&10&148&3\:336&112\:376&5\:591\:196&406\:621\:996\\
\hline R\Pi \mbox{ or }S\Pi&1&2&23&432&11\:929&456\:054&23\:186\:987&1\:518\:898\:380
\end{array}$$ 
The third line is sequence A122827 of \cite{Sloane}, whereas the fifth line is sequence A122705.
\end{enumerate}

\section{Isomorphisms of free and cofree Hopf algebras}

\subsection{Bigraduation of a free and cofree graded Hopf algebra}

Let $V$ be a vector space. A {\it bigraduation} of $V$ is a $\mathbb{N}^2$-graduation of $V$.
If $\displaystyle V=\bigoplus_{(i,j)} V_{i,j}$ is a bigraded space, the {\it first induced graduation} of $V$ is 
$\displaystyle \left(\sum_{j\geq 0} V_{i,j}\right)_{i\geq 0}$ and the {\it second induced graduation} of $V$ is 
$\displaystyle \left(\sum_{i\geq 0} V_{i,j}\right)_{j\geq 0}$. 

There is an immediate notion of bigraded Hopf algebra. A bigraded Hopf algebra $H$ is {\it connected}
if $H_{0,0}=K$ and $H_{0,j}=H_{i,0}=(0)$ for all $i,j\geq 1$. If $H$ is a connected, bigraded Hopf algebra,
then both induced graduation of $H$ are connected.

\begin{lemma}\label{21}
($Char(K)=(0)$). Let $H$ be a graded, connected, free and cofree Hopf algebra.
Consider any bigradation of $\frac{\g}{[\g,\g]}$ such that:
\begin{enumerate}
\item The first induced graduation on $\frac{\g}{[\g,\g]}$ by this bigraduation is the graduation induced by the graduation of $H$.
\item For all $i,j\geq 0$, $\left(\frac{\g}{[\g,\g]}\right)_{i,0}=\left(\frac{\g}{[\g,\g]}\right)_{0,j}=(0)$.
\end{enumerate}
Then there exists a connected bigraduation of $H$, inducing this bigraduation on $\frac{\g}{[\g,\g]}$.
\end{lemma}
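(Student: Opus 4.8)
The plan is to mimic the inductive constructions already used in the proofs of Theorem~\ref{4} and Proposition~\ref{6}: I will build the bigraded homogeneous components of $H$ degree by degree (with respect to the original graduation), refining at each stage the complement decomposition $H_n = (\g \cap H^{+2})_n \oplus m_n \oplus \h_n \oplus w_n$ of Lemma~\ref{3} into a bigraded version. The starting point is that, by Corollary~\ref{12}, $\g$ is freely generated as a Lie algebra by a graded complement $\h$ of $[\g,\g]=\g\cap H^{+2}$, and $\h$ is isomorphic as a graded space to $\frac{\g}{[\g,\g]}$; so a bigraduation on $\frac{\g}{[\g,\g]}$ refining the given graduation transports to a bigraduation on $\h$. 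By freeness of the Lie algebra $\g$ on $\h$, this extends canonically to a bigraduation of $\g$ (a bracket of bihomogeneous elements is bihomogeneous with bidegree the sum), and hence, by the Cartier--Quillen--Milnor--Moore theorem together with the first point of Corollary~\ref{12}, to a connected bigraduation of the enveloping algebra $\U(\g)=K\langle\h\rangle$, which is a Hopf subalgebra of $H$. Condition~2 guarantees this bigraduation is connected in the sense defined above, and condition~1 guarantees its first induced graduation is the restriction of the graduation of $H$.

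**Key steps.** First I would fix a graded complement $\h$ of $[\g,\g]$ in $\g$ and transport the bigraduation to $\h$, then to $\g$ and to $C_H=\U(\g)$ as above; this handles the ``cocommutative part'' of $H$. Next, I would choose, for each $n$, a bigraded complement: inside $H_n$ the subspace $(\g\cap H^{+2})_n=[\g,\g]_n$ is already bigraded (being spanned by brackets of bihomogeneous primitives), so I pick a bigraded complement $m_n$ of it in $(H^{+2})_n$ — here $(H^{+2})_n$ is bigraded by induction because it is spanned by products of bihomogeneous elements of lower original degree — and a bigraded $\h_n$ inside $\g_n$ (coming from $\h$), and finally I must choose a bigraded complement $w_n$ of $\g_n+H^{+2}_n$ in $H_n$. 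Since $H$ is free, $V_n := \h_n\oplus w_n$ is a complement of $(H^{+2})_n$ in $H_n$, and declaring $V_n$ bihomogeneous with the bigrading built from $\h_n$ and the to-be-chosen $w_n$ extends to all of $H_{\langle n\rangle}$ by the free product. The real work is to show $w_n$ can be chosen bigraded so that the coproduct is bihomogeneous.

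**The main obstacle.** The delicate point — exactly as in Theorem~\ref{4} — is the choice of $w_n$ so that the coproduct respects the bigraduation, i.e. $\Delta(x)\in\bigoplus_{(a,b)+(c,d)=(i,j)} H_{a,b}\otimes H_{c,d}$ for $x\in w_n$ of bidegree $(i,j)$. The strategy is that for $x\in w_n$ we have $\tdelta(x)\in (H^{+2}\otimes H^{+2})_n$ (more precisely, $\Delta(x)-x\otimes 1-1\otimes x$ lands in the part generated by lower-degree pieces, which are bigraded by the induction hypothesis), so $\tdelta(x)$ has a well-defined bihomogeneous decomposition; I then define the $w_n$-component of the given original-degree-$n$ space by pulling back a bihomogeneous section. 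Concretely: the projection $H_n \twoheadrightarrow \bigl(\frac{H^+}{H^{+2}}\bigr)_n$ identified with $V_n$, combined with the requirement that $\tdelta$ be bihomogeneous, forces each generator's bidegree, and since $\frac{H^+}{H^{+2}}$ is self-dual to $coPrim$ and by Lemma~\ref{lemmeplus} has the same graded dimensions as $\g$, I can match up a bigraded basis of the ``$w$-part'' with the bigraded structure already on $\g$ via the non-degenerate pairing of Theorem~\ref{4} (choosing $w_n$ dual to $(\g\cap H^{+2})_n$ as in Lemma~\ref{3}, and noting $(\g\cap H^{+2})_n$ is bigraded). One checks that with this choice $\Delta$ sends $w_n$ into the bigraded tensor square, using coassociativity to propagate: the only new coproduct terms beyond those forced by the induction are $x\otimes 1+1\otimes x$, which are trivially bihomogeneous of bidegree $(i,j)$. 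Extending by the algebra structure (products of bihomogeneous elements are bihomogeneous) and passing to the colimit over $n$ yields the desired connected bigraduation of $H$, whose first induced graduation is the original one by construction.
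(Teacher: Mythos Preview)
Your approach is essentially the paper's: transport the bigraduation to a complement $\h$ of $[\g,\g]$ in $\g$, extend it to $\g$ by Lie freeness, then to $H^{+2}_n$ by induction on $n$ using algebra freeness, and finally put a bigraduation on $w_n$ by declaring a basis of $w_n$ dual (for the Hopf pairing of Lemma~\ref{3}) to a bihomogeneous basis of $[\g,\g]_n$ to be bihomogeneous of the same bidegrees. All of this is correct and matches the paper closely.

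The one genuine gap is your verification that $\Delta$ is bihomogeneous on $w_n$. Your claim that ``the only new coproduct terms beyond those forced by the induction are $x\otimes 1+1\otimes x$'' is not right: for $x\in w_n$, the term $\tdelta(x)$ lies in $\bigoplus_{k=1}^{n-1}H_k\otimes H_{n-k}$, which is bigraded by induction, but this says nothing about $\tdelta(x)$ being concentrated in a \emph{single} bidegree. Coassociativity does not help here. The actual argument---and the whole point of choosing $w_n$ via pairing duality---is the Hopf pairing computation the paper gives: if $t_i\in w_n$ has bidegree $(n,j)$ (dual to $x_i\in[\g,\g]_n$ of that bidegree) and $y,z$ are bihomogeneous with $yz$ of bidegree $\neq(n,j)$, then $\langle\tdelta(t_i),y\otimes z\rangle=\langle t_i,yz\rangle=0$, since $t_i$ is orthogonal to $m_n\oplus\h_n\oplus w_n$ and to every $x_{i'}$ of a different bidegree. (For this you must also choose $m_n$ to be a \emph{bigraded} complement of $[\g,\g]_n$ inside the already bigraded $(H^{+2})_n$, which is possible and which your sketch does mention.) Non-degeneracy of the pairing on lower degrees then forces $\tdelta(t_i)$ to be bihomogeneous of bidegree $(n,j)$. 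Replace your ``coassociativity'' sentence with this pairing argument and the proof is complete.
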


\begin{proof} We choose a non-degenerate, homogeneous Hopf pairing on $H$
Let us fix a decomposition $H=[\g,\g] \oplus m\oplus \h \oplus w$ of lemma \ref{3}. Then $\h$ is identified with $\frac{\g}{[\g,\g]}$ via the canonical projection.
We then define a bigraduation on $\h$, making the canonical projection bihomogenous. It is clear that the first induced graduation on $\h$ induced by this bigraduation
is the graduation of $\h$. As $\g$ is freely generated by $\h$, the bigraduation of $\h$ is extended to a graduation of the Lie algebra $\g=[\g,\g]\oplus \h$.
As $\h_{i,0}=\h_{0,j}=0$ for all $i,j$, $\g_{i,0}=\g_{0,j}=(0)$ for all $i,j$.\\

We define $H_{m,n}$ by induction on $m$ such that:
\begin{enumerate}
\item $\g_n$ is a bigraded subspace of  $H_n$ and this bigraduation is the same as the one defined just before.
\item For all $i,j,k,l$ such that $i+k=m$, $H_{i,j}H_{k,l} \subseteq H_{m,j+l}$.
\item For all $x \in H_{m,n}$:
$$\Delta(x)\in \sum_{i+k=m,j+l=n} H_{i,j}\otimes H_{k,l}.$$
\end{enumerate}
For $m=0$, it is enough to take $H_{0,0}=K$ and $H_{0,n}=(0)$ if $n \geq 1$.
Let us assume the result at all ranks $<m$. As $H$ is free, the bigraduation of $H_0\oplus \ldots \oplus H_{m-1}$ can be uniquely extended
to $H^{+2}_m=[\g,\g]_m \oplus m_m$ such that condition $2$ is satisfied. Moreover, this clearly extends the bigraduation of $\g$ and,
for all $x\in [\g,\g]_m \oplus m_m \oplus \h_m$, the third point is satisfied, using the induction hypothesis for $y$ and $z$ and the second point 
if $x=yz\in H^{+2}_m$. 
It remains to define the bigraduation on $w_m$, such that the third point is satisfied for all $x \in w_m$.
Let us choose a basis $(x_i)_{i\in I}$ of $[\g,\g]_m$ made of bihomogeneous elements. Let $(t_i)_{i\in I}$ be the dual basis (for the pairing of $H$)
of $w_n$. We give a bigraduation on $w_n$, putting $t_i$ bihomogeneous of the same bidegree as $x_i$ for all $i$.
Then, for all $i\in I$, if $y$ and $z$ are such that $yz$ is bihomogeneous of a different bidegree, $\langle \Delta(x),y\otimes z\rangle=\langle x,yz\rangle=0$.
So $\Delta(x)$ is bihomogeneous of the same bidegree as $x$: the third point is satisfied. \end{proof}

\subsection{Isomorphisms of free and cofree Hopf algebras}

\begin{prop}
($Char(K)=0$). Let $H$ and $H'$ be two graded, connected, free and cofree Hopf algebra. Thery are isomorphic as (non-graded) Hopf algebras if, and only if,
$dim\left(\frac{\g}{[\g,\g]}\right)=dim\left(\frac{\g'}{[\g',\g']}\right)$.
\end{prop}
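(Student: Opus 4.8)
The plan is to reduce the non-graded isomorphism problem to the bigraded framework set up in Lemma~\ref{21}, and then to apply Theorem~\ref{7} (in its bigraded form) after choosing the bigraduations to coincide. First I would dispose of the easy direction: if $\phi:H\longrightarrow H'$ is an isomorphism of (non-graded) Hopf algebras, then $\phi$ restricts to an isomorphism of Lie algebras $\g\longrightarrow\g'$ and hence descends to an isomorphism $\frac{\g}{[\g,\g]}\longrightarrow\frac{\g'}{[\g',\g']}$, so these quotients have the same dimension. (One should be slightly careful that $\phi$ need not be graded, but since primitivity and the commutator ideal are defined purely algebraically, $\phi(\g)=\g'$ and $\phi([\g,\g])=[\g',\g']$ regardless.)

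For the converse, suppose $N:=\dim\left(\frac{\g}{[\g,\g]}\right)=\dim\left(\frac{\g'}{[\g',\g']}\right)$ (allowing $N=\infty$, handled by the same argument degreewise). By Corollary~\ref{12}, $[\g,\g]=\g\cap H^{+2}$, so $\frac{\g}{[\g,\g]}=\frac{\g}{\g\cap H^{+2}}$, which is graded; list a homogeneous basis of it, and let $\D$ be the corresponding graded set of its degrees. I would then choose, on $\frac{\g}{[\g,\g]}$, a bigraduation satisfying the two conditions of Lemma~\ref{21}: the first induced graduation is the given one, and the $(i,0)$ and $(0,j)$ components vanish; for instance, place a generator of degree $d\geq 1$ in bidegree $(d,1)$. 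Do the same on $\frac{\g'}{[\g',\g']}$, but using the \emph{same} abstract bigraded vector space --- here is where $N$ being equal matters, as it lets me match the two bases. By Lemma~\ref{21}, $H$ and $H'$ acquire connected bigraduations inducing these bigraduations on their abelianized-primitive quotients.

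Now I would forget the original graduation and keep only the \emph{second} induced graduation on $H$ and on $H'$. By construction this second graduation is also connected ($H_{i,0}=(0)$, $H_{0,j}=(0)$ for $i,j\geq 1$), and with respect to it, $H$ and $H'$ are still free and cofree Hopf algebras --- freeness and cofreeness are properties of the Hopf algebra structure together with a connected graduation, and the second graduation of a bigraded free/cofree object is again connected, so the arguments of Lemma~\ref{lemmeplus} and Proposition~\ref{11} apply verbatim. With respect to this new graduation, $\left(\frac{\g}{\g\cap H^{+2}}\right)$ and $\left(\frac{\g'}{\g'\cap H'^{+2}}\right)$ are isomorphic as graded spaces, since both are the chosen common bigraded space viewed via its second graduation. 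Theorem~\ref{7}, $2\Longrightarrow 1$, then produces an isomorphism of \emph{second-graded} Hopf algebras $H\longrightarrow H'$, which in particular is an isomorphism of non-graded Hopf algebras.

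The main obstacle, and the point requiring the most care, is the claim that the second induced graduation of a bigraded free-and-cofree Hopf algebra is again free and cofree in the sense needed to invoke Theorem~\ref{7}: one must check that the connectedness hypothesis holds for the second graduation (guaranteed by condition~2 of Lemma~\ref{21}, which forces $H_{i,0}=H_{0,j}=(0)$ for $i,j\geq 1$, hence $(H)_{0}^{\text{2nd}}=\bigoplus_i H_{i,0}=H_{0,0}=K$), and that the cofreeness/freeness proofs of Section~1 only used connectedness of the relevant graduation, not the original graduation of $H$. Once that is granted, everything else is bookkeeping with the bigraduation and an appeal to the already-established Theorem~\ref{7}.
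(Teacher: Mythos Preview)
Your approach is essentially the paper's: endow $\frac{\g}{[\g,\g]}$ with a bigraduation satisfying the hypotheses of Lemma~\ref{21}, extend it to $H$, pass to the second induced graduation, and invoke Theorem~\ref{7}. The forward direction and the overall architecture are correct.

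There is one genuine issue with your concrete choice of bigraduation. Placing every generator in bidegree $(d,1)$ puts all of $\frac{\g}{[\g,\g]}$ in second degree~$1$. When $N=\infty$ this makes the degree-$1$ component of $H$ (for the second graduation) infinite-dimensional, and then the machinery behind Theorem~\ref{7} --- the graded dual $H^*$, the comparison of Poincar\'e--Hilbert series in Theorem~\ref{4}, the dimension counts in Lemma~\ref{3} --- breaks down. Your parenthetical ``handled by the same argument degreewise'' does not repair this. The paper avoids the problem by choosing a homogeneous basis $(e_i)_{1\le i\le N}$ of $\frac{\g}{[\g,\g]}$ and declaring $e_i$ to have bidegree $(\deg(e_i),i)$; then each second-degree component of $\frac{\g}{[\g,\g]}$ is one-dimensional, the resulting second graduation on $H$ has finite-dimensional pieces, and the Poincar\'e--Hilbert series of $\frac{\g}{[\g,\g]}$ in the second graduation is $\frac{h-h^{N+1}}{1-h}$ (with $h^\infty=0$), the same for $H$ and $H'$. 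With that adjustment your argument goes through verbatim. Your remark that freeness and cofreeness persist under the second graduation is correct: these are structural properties of the underlying Hopf algebra, and the generating/cogenerating space $\h\oplus w$ constructed in Lemma~\ref{21} is bigraded, hence graded for the second graduation.
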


\begin{proof} $\Longrightarrow$. Immediate. $\Longleftarrow$. Let us put $n=dim\left(\frac{\g}{[\g,\g]}\right)=dim\left(\frac{\g'}{[\g',\g']}\right) \in \mathbb{N}\cup{\infty}$.
Let us choose a basis $(e_i)_{1\leq i\leq n}$ be a basis of $\frac{\g}{[\g,\g]}$ made of homogeneous elements. We give $\frac{\g}{[\g,\g]}$ a bigraduation,
putting $e_i$ bihomogenous of degree $(deg(e_i),i)$. By lemma \ref{21}, this bigraduation is extended to $H$. Considering the second induced graduation,
$H$ becomes a graded, connected free and cofree Hopf algebra, such that the Poincaré-Hilbert series of $\frac{\g}{[\g,\g]}$ is $\frac{h-h^{n+1}}{1-h}$,
with the convention $h^\infty=0$. The same can be done with $H'$. By theorem \ref{7}, $H$ and $H'$ are isomorphic as graded 
(for the second induced graduation) Hopf algebras. \end{proof}\\

{\bf Remark.} As $\g$ is a free Lie algebra, $dim\left(\frac{\g}{[\g,\g]}\right)$ can be interpreted as the number of generators of $\g$.

\begin{cor}
The Hopf algebras $\FQSym$, $\PQSym$, $\NCQSym$, $R\Pi$, $S\Pi$, $\H_{LR}$, $\YSym$, $\H_{NCK}$ and its decorated version $\H_{NCK}^\D$ 
for any nonempty graded set $\D$, $\H_{DP}$, $\H_o$, $\H_{ho}$, the free $2$-$As$ algebras, and $\H_{UBP}$ are isomorphic.
\end{cor}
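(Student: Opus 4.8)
The plan is to deduce the statement directly from the preceding proposition, which (in characteristic zero) reduces the whole question to computing one cardinal: two graded, connected, free and cofree Hopf algebras are isomorphic as (non-graded) Hopf algebras if and only if $\dim\left(\frac{\g}{[\g,\g]}\right)=\dim\left(\frac{\g'}{[\g',\g']}\right)$, where $\g$, $\g'$ are their Lie algebras of primitive elements. So it suffices to check two things: that each algebra in the list is graded, connected, free and cofree, and that $\dim\left(\frac{\g}{[\g,\g]}\right)=\aleph_0$ for each of them; the corollary then follows by applying the proposition to any two of them.

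The first point is purely bibliographic: it is exactly the list of the introduction with the references quoted there --- $\FQSym$ \cite{Malvenuto2,Malvenuto,Poirier}, $\PQSym$ \cite{Novelli2,Novelli1}, $\NCQSym$ \cite{Aguiar3,Bergeron}, $R\Pi$ and $S\Pi$ \cite{Aguiar3}, $\H_{LR}$ and $\YSym$ \cite{Loday2,Aguiar2}, $\H_{NCK}$ and $\H_{NCK}^\D$ for $\D$ a nonempty graded set \cite{Foissy2,Holtkamp,Foissy1}, $\H_{DP}$ \cite{Reutenauer}, $\H_o$ and $\H_{ho}$ \cite{FoissyUnterberger}, the free $2$-$As$ algebras \cite{Loday}, and $\H_{UBP}$ \cite{Aguiar}. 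One of course assumes $\D\neq\emptyset$, since $\H_{NCK}^\emptyset=K$. In particular the preceding proposition, corollary \ref{12} and proposition \ref{13} apply to each of these objects.

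For the second point, recall from corollary \ref{12} that $\g$ is a free Lie algebra and that $\frac{\g}{[\g,\g]}=\frac{\g}{\g\cap H^{+2}}$ has Poincar\'e-Hilbert series $S(h)=\sum_{n\geq1}s_nh^n$, so $\dim\left(\frac{\g}{[\g,\g]}\right)=\sum_{n\geq1}s_n$; as every $H$ here is of finite type this is at most $\aleph_0$, with equality exactly when $S(h)$ is not a polynomial. I would split the verification into two batches. For $\FQSym$, $\PQSym$, $\NCQSym$, $R\Pi$, $S\Pi$, $\H_o$, $\H_{ho}$, $\H_{DP}$ and $\H_{UBP}$ I would use a growth argument: if $S(h)$ were a polynomial then, by proposition \ref{13} (through $1-S(h)=\prod_{n\geq1}(1-h^n)^{p_n}$ and $R(h)=\frac{1}{1-P(h)}$), the series $P(h)$ would also be a polynomial, hence $R(h)$ rational, hence $\dim(H_n)$ of at most exponential growth; but $\dim(H_n)$ is super-exponential for all of these ($n!$ for $\FQSym$ and $\H_{ho}$, $(n+1)^{n-1}$ for $\PQSym$ and $\H_o$, the Fubini numbers for $\NCQSym$, larger still for $R\Pi$, $S\Pi$, $\H_{DP}$, $\H_{UBP}$) --- a contradiction. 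For $\H_{LR}=\YSym=\H_{NCK}$ and the free $2$-$As$ algebra on one generator, whose Hilbert series grows only exponentially, the growth estimate is useless, so I would instead read off $s_n\geq1$ for all $n\geq1$ from the tables of Section 2. Finally, for $\H_{NCK}^\D$ with $\D\neq\emptyset$, and for the free $2$-$As$ algebra on an arbitrary nonempty set of generators, I would exhibit a split surjection of graded Hopf algebras onto, respectively, a copy of $\H_{NCK}$ (the sub-Hopf-algebra of forests all of whose vertices carry a single fixed decoration $d_0\in\D$, composed with the projection killing every forest carrying another decoration) and the one-generator free $2$-$As$ algebra; such a retraction, being a pair of graded Hopf algebra maps composing to the identity, makes $\frac{\g'}{[\g',\g']}$ a direct summand of $\frac{\g}{[\g,\g]}$, so $\dim\left(\frac{\g}{[\g,\g]}\right)\geq\aleph_0$. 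In all cases $\dim\left(\frac{\g}{[\g,\g]}\right)=\aleph_0$, and the preceding proposition yields the corollary.

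The only non-formal ingredient is the second point, and within it the delicate cases are precisely $\H_{LR}$, $\YSym$, $\H_{NCK}$, $\H_{NCK}^\D$ and the free $2$-$As$ algebras, whose Hilbert series grows only exponentially and for which one must fall back on the explicitly computed sequences $(s_n)$ (plus the elementary retraction argument for the decorated and the many-generator versions); for all the other algebras the super-exponential growth of $\dim(H_n)$ settles the matter immediately.
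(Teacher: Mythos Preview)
Your strategy is exactly the paper's: verify that every algebra in the list is graded, connected, free and cofree, verify that $\frac{\g}{[\g,\g]}$ is infinite-dimensional in each case, and apply the preceding proposition. The paper's own proof is a single sentence which simply asserts both facts without further argument, so you are going beyond the paper in trying to justify the infinite-dimensionality claim. Unfortunately, that justification contains two genuine gaps.

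First, the implication ``$S(h)$ a polynomial $\Rightarrow$ $P(h)$ a polynomial'' is false. Take $S(h)=2h$: then $\g$ is the free Lie algebra on two generators in degree~$1$, and Witt's formula gives $p_n=\frac{1}{n}\sum_{d\mid n}\mu(n/d)\,2^d>0$ for every $n\geq1$, so $P(h)$ is not a polynomial---yet the identity $\prod_{n\geq1}(1-h^n)^{p_n}=1-2h$ does hold. What survives is the weaker (and sufficient) statement that $S$ polynomial forces $r_n$ to be at most exponential: since $\g\subseteq\U(\g)$ and the latter has Hilbert series $\frac{1}{1-S(h)}$, one gets $p_n\leq C M^n$ for suitable $C,M$, and then $r_n=\sum_{k}\sum_{n_1+\cdots+n_k=n}p_{n_1}\cdots p_{n_k}\leq M^n\sum_k C^k\binom{n-1}{k-1}=CM^n(1+C)^{n-1}$. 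With this correction your growth argument for the super-exponential batch is fine.

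Second, for $\H_{LR}\cong\YSym\cong\H_{NCK}$ and the free $2$-$As$ algebra on one generator you propose to ``read off $s_n\geq1$ for all $n\geq1$ from the tables of Section~2''. Those tables stop at $n=8$, so this is not a proof. You need a uniform argument---for $\H_{NCK}$, for instance, one has $P(h)=hR(h)=\frac{1}{2}(1-\sqrt{1-4h})$, hence $p_n=C_{n-1}\geq1$ for every $n$; then $1-S(h)=\prod_{n\geq1}(1-h^n)^{p_n}$ shows that for each $N$ the polynomial $\prod_{n\leq N}(1-h^n)^{C_{n-1}}$ agrees with $1-S(h)$ modulo $h^{N+1}$, and since these partial products have unbounded degree (and nonzero top coefficient), $1-S(h)$ cannot be a polynomial. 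A similar direct computation, or your retraction trick, then handles the remaining exponential-growth cases.
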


\begin{proof} They are all graded, connected, free and cofree, with an infinite-dimensional $\frac{\g}{[\g,\g]}$. \end{proof}

\bibliographystyle{amsplain}
\bibliography{biblio}

\end{document}